\documentclass[10pt,a4paper,twoside,leqno]
{amsart}

\setlength {\textwidth}{15.6truecm}
\setlength {\textheight}{22truecm}
\setlength{\oddsidemargin}{0cm}
\setlength{\evensidemargin}{0cm}

\usepackage{verbatim}
\usepackage{amssymb}
\usepackage{upref}
\usepackage{amsmath}
\usepackage{amsthm}
\usepackage{amsfonts}
\usepackage{latexsym}
\usepackage{amssymb}
\usepackage{color}
\usepackage[latin1]{inputenc} 
\usepackage{enumerate}



\theoremstyle{plain}
\newtheorem{Thm}{Theorem}
\newtheorem*{Thm*}{Theorem}

%


\newtheorem{Prop}{Proposition}
\newtheorem{Lem}[Prop]{Lemma}

\theoremstyle{definition}
\newtheorem{Def}{Definition}

\theoremstyle{remark}
\newtheorem{Rem}{Remark}


\allowdisplaybreaks


\newcommand{\C}{{\mathbb C}}
\newcommand{\R}{{\mathbb R}}

\newcommand{\base}{\dot}
\newcommand{\fou}{\widehat}
\newcommand{\aux}{{\mathrm{aux}}}

\def\duno{\partial_1}

\def\dt{\partial_t}
\def\d1{\dfrac{d\;}{dx_1}}

\def\H{\mathcal H }

\def\ds{\displaystyle}
\def\eps{\varepsilon}
\def\tdelta{\tilde\delta}
\def\ttau{\tilde\tau}


\newcommand{\Hc}{{\H}} 


\newcommand{\p}{\partial}
\newcommand{\Ec}{{\mathcal{E}}} 
\newcommand{\Wc}{{\mathcal{W}}} 

\newcommand{\nc}{{\mathrm{nc}}}
\newcommand{\car}{{\mathrm{c}}}

\newcommand{\Lc}{{\mathcal{L}}}
\newcommand{\Ac}{{\mathcal{A}}}
\newcommand{\Bc}{{\mathcal{B}}}
\newcommand{\Cc}{{\mathcal{C}}}
\newcommand{\Dc}{{\mathcal{D}}}
\newcommand{\Vc}{\mathcal V}

\DeclareMathOperator{\dv}{{div}}
\DeclareMathOperator{\rot}{{\nabla \times\, }}
\newcommand{\tm}{^\intercal}
\renewcommand{\doteq}{{\mathrm{:=}}}

\newcommand{\Id}{{\mathrm{I}}}

\newcommand{\rhop}{{\base{\alpha}}}

\let\Im\relax
\DeclareMathOperator{\Im}{{Im}}
\let\Re\relax
\DeclareMathOperator{\Re}{{Re}}
\let\supp\relax
\DeclareMathOperator{\supp}{{supp}}

\newcommand{\doititle}{}

\title[Plasma-vacuum interface]{Weak stability of the plasma-vacuum interface problem}
\author[d. catania, m. d'abbicco and p. secchi]{Davide Catania$^{1,2}$, Marcello D'Abbicco$^1$, Paolo Secchi$^1$}

\subjclass[2010]{Primary: 76W05; Secondary: 35Q35, 35L50, 76E17, 76E25, 35R35, 76B03}
\keywords{Ideal compressible Magneto-Hydrodynamics, Maxwell equations, plasma-vacuum interface, characteristic free boundary, nonuniformly characteristic boundary}

\email{davide.catania@unibs.it}
\email{marcello.dabbicco@unibs.it}
\email{paolo.secchi@unibs.it}

\address{$^1$ DICATAM, Mathematical Division, University of Brescia, Via Valotti 9, 25133 Brescia, Italy\newline \indent	$^2$ SMART Engineering Solutions \& Technologies (SMARTEST) Research Centre, 
	eCampus University,
	Via Isimbardi 10, 22060
	Novedrate (CO), Italy
	}

\begin{document}

\baselineskip=14pt

\begin{abstract}

We consider the free boundary problem for the two-dimensional plasma-vacuum interface in ideal compressible magnetohydrodynamics (MHD). In the plasma region, the flow is governed by the usual compressible MHD equations, while in the vacuum region we consider the Maxwell system for the electric and the magnetic fields. At the free interface, driven by the plasma velocity, the total pressure is continuous and the magnetic field on both sides is tangent to the boundary.

We study the linear stability of rectilinear plasma-vacuum interfaces by computing the Kreiss--Lopatinski\u{\i} determinant of an associated linearized boundary value problem. Apart from possible resonances, we obtain that the piecewise constant plasma-vacuum interfaces are always weakly linearly stable, independently of the size of tangential velocity, magnetic and electric fields on both sides of the characteristic discontinuity.

We also prove that solutions to the linearized problem obey an energy estimate with a loss of regularity with respect to the source terms, both in the interior domain and on the boundary, due to the failure of the uniform Kreiss--Lopatinski\u{\i} condition, as the Kreiss--Lopatinski\u{\i} determinant associated with this linearized boundary value problem has roots on the boundary of the frequency space. In the proof of the a priori estimates, a crucial part is played by the construction of symmetrizers for a reduced differential system, which has poles at which the Kreiss--Lopatinski\u{\i} condition may fail simultaneously.

\end{abstract}

\maketitle

\section{Introduction}

Plasma-vacuum interface problems appear in the mathematical modeling of plasma confinement by magnetic fields in thermonuclear energy production (as in Tokamaks, Stellarators; see, e.g., \cite{Goed}). 
There are also important applications in astrophysics, where the plasma-vacuum interface problem can be used for modeling the motion of a star or the solar corona when magnetic fields are taken into account. 

In~\cite{SeTr,SeTrNl}, the authors obtained the local-in-time existence and uniqueness of solutions to the free boundary problem for the plasma-vacuum interface in ideal compressible magnetohydrodynamics (MHD), by considering the {\it pre-Maxwell dynamics} for the magnetic field in the vacuum region, as usually assumed in the classical formulation. The linearized stability of the relativistic case has been addressed by Trakhinin in \cite{trakhinin12}, in the case of plasma expansion in vacuum. The paper \cite{CDAS} is devoted to the study of the linearized stability for the non-relativistic case, but, instead of the pre-Maxwell dynamics, in the vacuum region the displacement current was taken into account and the complete system of {\it Maxwell equations} for the electric and the magnetic fields was considered. The introduction of this model aims at investigating the influence of the electric field in vacuum on the well-posedness of the problem, as in the classical pre-Maxwell dynamics such an influence is hidden. See also \cite{mandrik-trakhinin} for a similar problem.

For the relativistic plasma-vacuum problem, Trakhinin \cite{trakhinin12} has shown the possible ill-posedness in the presence of a sufficiently strong vacuum electric field. Since relativistic effects play a rather passive role in the analysis of \cite{trakhinin12}, it is natural to expect the same for the non-relativistic problem.
On the contrary, in \cite{CDAS} it was shown that a {\it sufficiently weak} vacuum electric field precludes ill-posedness and gives the well-posedness of the linearized problem, thus somehow justifying the practice of neglecting the displacement current in the classical pre-Maxwell formulation when the vacuum electric field is weak enough.

In all the previously cited papers \cite{CDAS, mandrik-trakhinin, SeTr,SeTrNl, trakhinin12} the analysis is performed under a suitable stability condition stating that at each point of the free interface the magnetic fields on both sides are not parallel, see also  \cite{Cat,chenwang,cmst, SWZ,trakhinin09arma} for the similar condition on current-vortex sheets. These works show that non-parallel magnetic fields may stabilize the motion. 
{ The main technical reason of why the stabilization occurs is that the non-collinearity of the magnetic fields is a sufficient condition for the ellipticity of the symbol of the boundary operator, namely the operator that is obtained from the boundary conditions and applies to the function describing the free interface, and this gives a control of the space-time gradient of such a function.}

{
On the other hand, one could guess that the well-posedness could be guaranteed as well for problems without ellipticity of the boundary operator,  and necessarily with less regularity of the free interface, provided a suitable stability condition is assumed.
}

{In this regard, in the recent paper \cite{trakhinin15} Y. Trakhinin considered the three dimensional plasma-vacuum interface problem in the classical pre-Maxwell dynamics formulation with non-elliptic interface symbol. In \cite{trakhinin15} a basic $L^2$ a priori estimate was derived for the linearized problem with variable coefficients, in the case that the unperturbed plasma and vacuum magnetic fields are everywhere parallel on the interface, provided a Rayleigh-Taylor sign condition on the jump of the normal derivative of the total pressure is satisfied at each point of the interface. The general case when the plasma and vacuum magnetic fields are collinear somewhere on the interface, but not everywhere, is still an open problem.
}

In this paper we consider the two dimensional plasma-vacuum interface in ideal compressible magnetohydrodynamics (MHD). In the plasma region, the flow is governed by the usual compressible MHD equations, while in the vacuum region we consider the Maxwell system for the electric and the magnetic fields. At the free interface, driven by the plasma velocity, the total pressure is continuous and the magnetic field on both sides is tangent to the boundary. In particular, in two dimensions the magnetic fields obviously belong to the same plane, thus the condition that magnetic fields are tangential at the free boundary necessarily implies that they are parallel.

We study the linear stability of rectilinear plasma-vacuum interfaces by computing the Kreiss--Lopatinski\u{\i} determinant of an associated linearized boundary value problem. Apart from possible resonances, we obtain that the piecewise constant plasma-vacuum interfaces are {\it always} weakly linearly stable, independently of the size of tangential velocity, magnetic and electric fields on both sides of the characteristic discontinuity. In particular, violent instability never occurs.

In comparison with the possible ill-posedness in three dimensions in presence of a sufficiently strong vacuum electric field for the relativistic plasma-vacuum problem \cite{trakhinin12}, and the similar result for the non-relativistic problem, our study shows that, even with parallel magnetic fields, the two-dimensional plasma-vacuum interfaces are more stable than the three-dimensional ones. 
{This can be explained by noticing that in the two dimensional case the vacuum electric field has only one component $\Ec=\Ec_3$, orthogonal to the plane of motion, while the crucial role in the appearence of violent instability in the three dimensional case is played by the normal component $\Ec_1$ to the interface (which is zero by definition in the 2D case).
}

In the present paper we also prove that solutions to the linearized problem obey an energy estimate with a loss of regularity with respect to the source terms, both in the interior domain and on the boundary, due to the failure of the uniform Kreiss--Lopatinski\u{\i} condition.

It would be interesting to extend the normal modes analysis with the computation of the Kreiss--Lopatinski\u\i\ determinant to the three-dimensional problem, in order to get a complete description of the region of weak stability/instability. However, the great algebraic complexity makes it a very difficult task. 

There are essential difficulties in the study of our problem. First, the problem of compressible plasma-vacuum interfaces is a nonlinear hyperbolic problem with a free boundary, since the interface is part of the unknowns; moreover, this free boundary is characteristic and we only expect a partial control of the trace of the solution, namely of the so-called noncharacteristic part of the solution. 

When we take the Laplace transform of the solution with respect to time and the Fourier transform with respect to the tangential space variable, we obtain an ODE system only for the transform of the noncharacteristic part of the solution. As a consequence of the characteristic boundary, the symbol of the ODE has poles. The Kreiss--Lopatinski\u\i\ determinant associated to the ODE has no root in the interior of the frequency space (if this happened, the problem would be violently unstable, i.e. ill-posed), but it has some roots on the boundary and consequently the Kreiss--Lopatinski\u{\i} condition may only hold in weak form. An additional difficulty comes from the fact that some poles of the symbol coincide with the roots of the determinant. Under suitable restrictions on the basic state, such resonances are not allowed and the roots may be only simple. The case of multiple roots may typically occur at the transition to instability, see \cite{transition}, and gives a loss of regularity of higher order in the a priori estimate.

Following the approach of \cite{nappes}, we prove the energy estimate of solutions to the linearized problem by the construction of a degenerate Kreiss symmetrizer associated with the reduced boundary value problem. However, here the situation is more complicated than for the two-dimensional compressible vortex sheets \cite{nappes}, because there are more poles and roots of the Kreiss--Lopatinski\u{\i} determinant, and they can coincide. 

The paper is organized as follows. In Section \ref{problem} we give the formulation of the free boundary problem, the reduction to the fixed domain with flat boundary and its linearization. The main result of the paper is stated in Section \ref{Main results}. In Section \ref{reductions}
we apply to the problem the Laplace transform in the time variable and the Fourier transform in the tangential space variable. Then we eliminate the unknown front with the help of the ellipticity of the boundary conditions for the front. In Section \ref{normal} we reduce the linearized problem into a boundary value problem of the homogeneous ordinary differential equations, with respect to the normal space variable, for the noncharacteristic part of the solution.  In Section \ref{sec.lopatinskii} we compute the roots of the associated Kreiss--Lopatinski\u{\i} determinant and in Section \ref{symmetrizer} we construct the Kreiss symmetrizer for this boundary value problem, proceeding as in \cite{chazarain-piriou,coulombel1,nappes,kreiss,ralston}.

Special attention is required for the poles of the symbol of the reduced boundary value problem, which split in two categories: points that are poles of the symbol where the Lopatinski\u{\i} condition is satisfied, and points that are poles of the symbol and simultaneously roots of the Kreiss--Lopatinski\u{\i} determinant. To deal with the difficulty arising from these poles in the construction of symmetrizers, we use an approach inspired by Majda and Osher \cite{majda-osher}, as in \cite{WYu}. Finally, in Section \ref{Stima dell'energia}, we derive the energy estimates of solutions to the linearized problem by using the constructed symmetrizer.

\section{Formulation of the problem}\label{problem}

We consider the case when the whole space $\mathbb R^2$ is split into two regions by a smooth hypersurface $\Gamma(t):=\{(t,x_1,x_2)\in (0,\infty)\times\R^2 : F(t,x)=0\}=\{x_1=\varphi(t,x_2)\}$, and define $\Omega^\pm(t):=\{(t,x_1,x_2)\in (0,\infty)\times\R^2 :x_1\gtrless\varphi(t,x_2)\}$. We assume the presence of ideal compressible plasma in $\Omega^+(t)$ and vacuum in $\Omega^-(t)$.

We assume that plasma is governed by the ideal compressible MHD system, that in 3D reads
%
%
\begin{equation}
\begin{cases}\label{e1}
\rho_t + \dv (\rho v) = 0 \,, \\
\rho \bigl( \p_t v + (v\cdot \nabla) v \bigr) - (H\cdot \nabla) H + \nabla q = 0 \,, \\
\p_t H - \rot (v\times H) = 0 \,,\\
\p_t \bigl(\rho e + |H|^2/2\bigr) + \dv \bigl( (\rho e+p) v + H \times (v\times H) \bigr) =0 \,,
\end{cases} 
\end{equation} 
where $\rho$ is the density, $v$ and $H$ are the velocity and (plasma) magnetic fields, $q=p+|H|^2/2$ is the total pressure, with $p$ denoting the pressure. Moreover $e=E+|v|^2/2$ denotes the total energy, with $E$ the internal energy. The subscript $t$ denotes differentiation with respect to the time variable $t$. This system is supplemented by the divergence constraint $$\dv H=0$$ on the initial data. 
Given state equations of gases $\rho=\rho(p,S)$ and $E=E(p,S)$, where $S$ is the entropy, and the first principle of thermodynamics, \eqref{e1} is a closed system.

Since we are interested in the 2D planar case, we assume that no variable depends on  $x_3$, so that the terms with $\p_3$ are zero, and that  $v_3=H_3=0$. Choosing as unknown the vector $ U =U (t, x )=(q, v,H, S)\tm$, the two dimensional version of the plasma system \eqref{e1}  can be written in symmetric form as (see \cite{CDAS,SeTr})
\begin{align*}
A_0(U )\partial_tU+\sum_{j=1}^2 A_j(U )\partial_jU=0\, , \qquad \text{in } \Omega^+(t)\,,
\end{align*}
where the explicit expressions of the matrices $A_j$'s are given below in \eqref{A0}--\eqref{A2}. This system is symmetric hyperbolic provided that $A_0$ is positive definite, i.e. if $\rho>0$ and $\rho_p:=\p_p\rho>0$. In particular, if $\rho$ is uniformly bounded away from 0 in $\Omega^+(t)$, this yields that the density has a jump across the interface, because it vanishes in the vacuum region.

In the vacuum region, the electric field $\Ec$ and the magnetic field $\Hc$ are governed by the Maxwell equations that, in a three dimensional domain, can be written in nondimensional form as (see \cite{CDAS,mandrik-trakhinin})
\begin{align} \label{eq.vacuum0} \begin{cases}
\eps\p_t \Hc + \rot \Ec = 0\,, \qquad \eps\p_t \Ec - \rot \Hc = 0 \, ,  \\
\dv \Hc=0\,, \qquad \dv \Ec=0\,, \qquad \text{at} \; t=0\,,
\end{cases}  \end{align}
where $\Ec, \Hc$ are the (vacuum) electric and magnetic fields, $\eps:=\bar{v}/c_{\rm L}$, $c_{\rm L}$ is the speed of light, while $\bar v$ is the velocity of a reference uniform flow, for instance the sound speed.

Again, in order to obtain the two dimensional version of the problem we are considering, we assume that no variable depends on  $x_3$ and that  $\Ec_1=\Ec_2=\Hc_3=0$ in $\Omega^-(t)$.
%
As far as the vacuum is concerned, we apply a reflection with respect to $x_1$ (in particular, $\p_1$ becomes $-\p_1$) in order to have again equations that hold in $\Omega^+(t)$.
Consequently, the system \eqref{eq.vacuum0} reduces to
\[ \begin{cases}
\eps\p_t \Hc_1 + \p_2 \Ec_3 = 0\,,  \qquad \text{in } \Omega^+(t)\, ,\\
\eps\p_t \Hc_2 + \p_1 \Ec_3 = 0\,,  \\
\eps\p_t \Ec_3 + \p_1 \Hc_2 + \p_2 \Hc_1 = 0\,.
\end{cases}  \]
As usual, the divergence constraint on $\Hc$ is just a restriction on the initial data.


We assume that the unknown interface $\Gamma(t)$ moves according to the plasma velocity, i.e.
\[
\frac{{\rm d}F }{{\rm d} t}=0\, , \qquad \text{on} \quad \Gamma(t)\,,
\]
and that at the interface the total pressure is continuous, while the magnetic fields can manifest only a tangential discontinuity. In a three dimensional domain, such conditions can be written as
\[
 [q]=0,\quad  H\cdot N=0, \quad \Hc\cdot N=0 ,\quad  N \times \Ec = \eps (N \cdot v)  \Hc \qquad \text{on} \quad \Gamma(t)
\]
where $N=\nabla F$ and $[q]= q|_{\Gamma}-\frac{1}{2}|\mathcal{H}|^2_{|\Gamma}+\frac12|\Ec|^2_{|\Gamma}$ denotes the jump of the total pressure across the interface. For a discussion on the boundary conditions see \cite{Goed}.

Recalling the parametrization of $\Gamma(t)=\{x_1=\varphi(t,x_2)\}$, we have
$N=(1,-\varphi_2,0)$, and in the two dimensional case, the boundary conditions become
\begin{gather*} \varphi_t = v_N = v_1 - \varphi_2 v_2 \,, \qquad [q]=0\,,\qquad 0 = H_N = H_1 -\varphi_2 H_2\,, \\
 0 = \Hc_N = \Hc_1-\varphi_2\Hc_2\,, \qquad \Ec_3+\eps\varphi_t\Hc_2 =0\,
\end{gather*}
(we are omitting $\varphi_2 \Ec_3 + \eps\varphi_t \Hc_1=0$, since it can be obtained by summing the last two conditions, respectively multiplied by $\eps\varphi_t$ and $\varphi_2$).

From now on, we neglect the third component of the vector functions  $v,H,\Hc,N$, that now are vectors in ~$\R^2$, while~$\Ec$ is a scalar function. Notice that, even if~$\Ec$ is a scalar, we write~$\p_j\Ec$ to denote derivatives in order to keep consistency with previous papers. Otherwise, subscripts denote components of vector quantities, but derivatives of scalar functions. Moreover, sometimes we will use the notation $\partial_0=\partial_t$ to denote the partial derivative with respect to $t$.

\medskip

\textbf{Reduction to a fixed domain.} In order to reduce the problem to a fixed domain with flat boundary, independent of time,
\[
\Omega^+ \doteq  \R^2 \cap \{x_1> 0 \} \, ,\qquad \Gamma \doteq \R^2\cap\{x_1=0\} \, ,
\]
we need the diffeomorphism provided by the following lemma.

\begin{Lem}
\label{lemma.diffeo}
Let $m \ge3$ be an integer. For any $T>0$, and
for any
\[ \varphi \in \cap_{j=0}^{m-1} {\mathcal C}^j([0,T];H^{m-j-\frac12}(\R))\,,\]
satisfying without loss of generality $\| \varphi\|_{{\mathcal C}([0,T];H^{2}(\R))} \le 1$, there exists a function \[\Psi \in \cap_{j=0}^{m-1} {\mathcal C}^j([0,T];H^{m-j}(\Omega^+))\]
such that the function
\begin{equation*}
\label{change}
\Phi(t,x) \doteq  \big( x_1 +\Psi(t,x),x_2 \big) \, , \qquad (t,x) \in [0,T]\times \Omega^+ \, ,
\end{equation*}
defines an $H^m$-diffeomorphism of $\Omega^+$ for all $t \in [0,T]$.
Moreover, there holds \[\partial^j_t (\Phi - Id) \in {\mathcal C}([0,T];H^{m-j}(\Omega^+))\] for $j=0,\dots, m-1$,
$\Phi(t,0,x_2)=(\varphi(t,x_2),x_2)$, $\duno \Phi(t,0,x_2)=(1,0)$, as well as
\begin{gather*}
\label{eq:Phitcontrol}
\|\p_t^j \Phi(t,\cdot)\|_{L^\infty(\Omega^+)} \leq \frac1{\sqrt{2\pi}}\,
\|\p_t^j \varphi(t,\cdot)\|_{H^{\frac32}(\R)} \qquad t\in[0,T] \,,   \\
\label{eq:Phiest}
\| \Psi_1(t,\cdot)\|_{L^\infty(\Omega^+)} \leq \frac{1}{2} \qquad t\in[0,T]\,,
\end{gather*}
for $j=1,\ldots,m-{2}$.
\end{Lem}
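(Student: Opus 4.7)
The plan is to construct $\Psi$ explicitly as a harmonic-type extension of $\varphi$ using a Fourier cutoff in the tangential variable, in the spirit of M\'etivier's lifting. Writing $\hat{\cdot}$ for the Fourier transform in $x_2$, I would set
\begin{equation*}
\widehat{\Psi}(t,x_1,\xi) \doteq \chi\bigl(x_1 \langle\xi\rangle\bigr)\,\widehat{\varphi}(t,\xi),
\end{equation*}
where $\chi\in\mathcal C^\infty_c(\R)$ is a fixed cutoff with $\chi(0)=1$ and $\chi'(0)=0$, and $\langle\xi\rangle=(1+\xi^2)^{1/2}$. The vanishing condition $\chi'(0)=0$ is exactly what forces $\duno\Psi(t,0,x_2)=0$, hence $\duno\Phi(t,0,x_2)=(1,0)$, while $\chi(0)=1$ gives the trace condition $\Psi(t,0,\cdot)=\varphi(t,\cdot)$, hence $\Phi(t,0,x_2)=(\varphi(t,x_2),x_2)$.

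Next I would verify the Sobolev regularity by Plancherel. The change of variables $\eta=x_1\langle\xi\rangle$ in the $x_1$-integral shows that each factor $\chi^{(k)}(x_1\langle\xi\rangle)\langle\xi\rangle^k$ contributes a factor $\langle\xi\rangle^{k-1/2}$ after integrating in $x_1$, which is precisely what is needed to obtain
\begin{equation*}
\|\Psi(t,\cdot)\|_{H^{m}(\Omega^+)} \lesssim \|\varphi(t,\cdot)\|_{H^{m-\tfrac12}(\R)},
\end{equation*}
and similarly for time-differentiated versions, since the time variable only enters through $\widehat{\varphi}(t,\xi)$ and the estimates are pointwise in~$t$. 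This immediately gives $\p_t^j(\Phi-\mathrm{Id})\in\mathcal C([0,T];H^{m-j}(\Omega^+))$ for $j=0,\ldots,m-1$.

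The $L^\infty$ estimates come from the Fourier inversion formula combined with Cauchy--Schwarz: for $\p_t^j\Phi$ the extra $\langle\xi\rangle$ weights coming from $\chi$ evaluated at $x_1\langle\xi\rangle$ are harmless because $\chi$ is bounded, and one finds
\begin{equation*}
\|\p_t^j\Phi(t,\cdot)\|_{L^\infty}\le \tfrac{1}{\sqrt{2\pi}}\|\p_t^j\varphi(t,\cdot)\|_{H^{3/2}(\R)}.
\end{equation*}
For $\Psi_1=\Psi$, using $\chi'(0)=0$ (hence $\chi(r)-\chi(0)=O(r^2)$ and $|\chi'(r)|\le C|r|$ near~$0$), the kernel $\chi(x_1\langle\xi\rangle)\langle\xi\rangle$ contributes in such a way that
\begin{equation*}
\|\duno\Psi(t,\cdot)\|_{L^\infty(\Omega^+)}\lesssim \|\varphi(t,\cdot)\|_{H^{2}(\R)} \le 1,
\end{equation*}
and by choosing $\chi$ with a sufficiently small $C^1$-norm (or by rescaling $x_1\mapsto \lambda x_1$ in the argument of $\chi$ with $\lambda$ small, which preserves the trace conditions) one reaches $\|\duno\Psi\|_{L^\infty}\le 1/2$, yielding $\duno\Phi_1=1+\duno\Psi\ge 1/2>0$. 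The analogous estimate for $\|\Psi_1\|_{L^\infty}\le 1/2$ then follows.

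Finally, $\Phi$ being an $H^m$-diffeomorphism is obtained from the global inversion theorem: $\Phi(t,\cdot)$ is of the form $(x_1+\Psi(t,x),x_2)$, so injectivity at fixed $x_2$ reduces to strict monotonicity in $x_1$ of $x_1+\Psi(t,x_1,x_2)$, which holds since $\duno\Phi_1\ge 1/2$; surjectivity onto $\Omega^+$ is immediate from the boundary behaviour $\Phi_1(t,0,x_2)=\varphi(t,x_2)$ together with $\Phi_1(t,x_1,x_2)\to\infty$ as $x_1\to\infty$ (since $\Psi$ is bounded). The main technical obstacle, I expect, is carefully tuning $\chi$ so that the two $L^\infty$-constants come out with the precise prefactors claimed in the lemma; once the Fourier representation is set up, the Sobolev estimates are routine Plancherel calculations.
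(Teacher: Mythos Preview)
Your proposal is correct and follows essentially the same approach as the paper: the paper likewise defines $\Psi(x_1,x_2)=\chi(x_1\langle D\rangle)\varphi(x_2)$ with an even cutoff $\chi\in C_c^\infty(\R)$, $\chi\equiv1$ on $[-1,1]$ (so $\chi(0)=1$, $\chi'(0)=0$), verifies the Sobolev bounds by the same Plancherel/change-of-variable computation, and obtains the smallness of $\|\partial_1\Psi\|_{L^\infty}$ by stretching $\chi$ to have support in $[-M,M]$ with $|\chi'|\le 2/M$ (equivalent to your rescaling $\lambda=1/M$), deriving $|\partial_1\Psi|\le CM^{-3/4}\|\varphi\|_{H^2}$ and then concluding $\partial_1\Phi_1\ge 1/2$. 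One small remark: in the paper's notation $\Psi_1$ already denotes $\partial_1\Psi$, so your last sentence about an ``analogous estimate for $\|\Psi_1\|_{L^\infty}$'' is redundant with the bound you had just established.
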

\begin{proof}
See Appendix \ref{AppA}.
\end{proof}

%
We introduce a change of  variables by setting
\begin{gather*}
\tilde{U}(t,x )\doteq  {U}(t,\Phi (t,x))\, , \quad \tilde{\Hc}(t,x )\doteq  \Hc(t,\Phi (t,x))\, , \quad
\tilde{\Ec}(t,x)\doteq  \Ec(t,\Phi(t,x))\, ;
\end{gather*}
then, we conveniently drop the tilde sign. The (reflected) vacuum equations, in $\Omega^+$, become
\[ \begin{cases}
\eps\p_t \Hc_1 + \p_2 \Ec - \frac1{1+\Psi_1}\,(\eps\Psi_t\p_1\Hc_1+\Psi_2\p_1\Ec)= 0\,,  \\
\eps\p_t \Hc_2 - \frac1{1+\Psi_1}\,(\eps\Psi_t\p_1\Hc_2 - \p_1 \Ec) = 0\,,  \\
\eps\p_t \Ec + \p_2 \Hc_1 - \frac1{1+\Psi_1} (\eps\Psi_t\p_1\Ec-\p_1 \Hc_2+\Psi_2\p_1\Hc_1) = 0\,.
\end{cases}  \]
Setting $\Wc=(\Hc_1,\Hc_2,\Ec)\tm$, the previous equations can be written as a symmetric system:
\[ \eps\p_t \Wc + B_1(\Psi)\p_1\Wc + B_2\p_2\Wc = 0, \]
where
\[ B_1(\Psi) = -\frac1{1+\Psi_1} \begin{pmatrix}
\eps\Psi_t & 0 & \Psi_2 \\
0 & \eps\Psi_t & -1 \\
\Psi_2 & -1 & \eps\Psi_t
\end{pmatrix}\,, \qquad B_2 = \begin{pmatrix}
0 & 0 & 1 \\
0 & 0 & 0 \\
1 & 0 & 0
\end{pmatrix}\,. \]

As for plasma, we have $U=(q,v_1,v_2,H_1,H_2,S)\tm$ that satisfies, before the change of variables,
\[ \sum_{j=0,1,2} A_j(U)\p_j U =0\,, \]
where
\begin{align}
A_0(U) \label{A0}
	& =\begin{pmatrix}
	\rho_p/\rho & 0 & 0 & -H_1\rho_p/\rho & -H_2\rho_p/\rho & 0 \\
	0 & \rho & 0 & 0 & 0 & 0 \\
	0 & 0 & \rho & 0 & 0 & 0 \\
	-H_1\rho_p/\rho & 0 & 0 & 1 + H_1^2\rho_p/\rho & H_1H_2\rho_p/\rho & 0 \\
	-H_2\rho_p/\rho & 0 & 0 & H_1H_2\rho_p/\rho & 1 + H_2^2\rho_p/\rho & 0 \\
	0 & 0 & 0 & 0 & 0 & 1
	\end{pmatrix},\\
A_1(U) \label{A1}
	& = \begin{pmatrix}
	v_1\rho_p/\rho & 1 & 0 & -H_1v_1\rho_p/\rho & -H_2v_1\rho_p/\rho & 0 \\
	1 & \rho v_1 & 0 & -H_1 & 0 & 0 \\
	0 & 0 & \rho v_1 & 0 & -H_1 & 0 \\
	-H_1v_1\rho_p/\rho & -H_1 & 0 & (1 + H_1^2\rho_p/\rho)v_1 & H_1H_2v_1\rho_p/\rho & 0 \\
	-H_2v_1\rho_p/\rho & 0 & -H_1 & H_1H_2v_1\rho_p/\rho & (1 + H_2^2\rho_p/\rho)v_1 & 0 \\
	0 & 0 & 0 & 0 & 0 & v_1
\end{pmatrix}, \\
A_2(U) \label{A2}
	& = \begin{pmatrix}
	v_2\rho_p/\rho & 0 & 1 & -H_1v_2\rho_p/\rho & -H_2v_2\rho_p/\rho & 0 \\
	0 & \rho v_2 & 0 & -H_2 & 0 & 0 \\
	1 & 0 & \rho v_2 & 0 & -H_2 & 0 \\
	-H_1v_2\rho_p/\rho & -H_2 & 0 & (1 + H_1^2\rho_p/\rho)v_2 & H_1H_2v_2\rho_p/\rho & 0 \\
	-H_2v_2\rho_p/\rho & 0 & -H_2 & H_1H_2v_2\rho_p/\rho & (1 + H_2^2\rho_p/\rho)v_2 & 0 \\
	0 & 0 & 0 & 0 & 0 & v_2
	\end{pmatrix}.
\end{align}
After changing the variables and removing the tilde from $\tilde U$, we have
\[ \sum_{j=0,2} A_j(U)\p_j U + \tilde{A}_1(U)\p_1U=0\,, \qquad
\tilde{A}_1=\frac1{1+\Psi_1}\,\bigl(A_1-\Psi_tA_0-\Psi_2A_2\bigr)\,. \]

\medskip

\textbf{Linearization.} We linearize the problem about a piecewise constant basic state $\base U =(\base q, \base v, \base H, \base S) $, denoted by a dot sign, with  $\base\varphi=0$, so that we can take $\base\Psi=0$. Consequently, $\base
N=(1,0)$ and $\base{v}_N=v_1=0$, since~$\base\varphi_t=0$. We obtain:
\[ \base H_1\equiv \base H_N=0, \qquad \base \Hc_1\equiv\base\Hc_N=0, \qquad \base \Ec =
-\eps\base\varphi_t\base\Hc_2=0.\]
To summarize,
\[ \base\varphi=0\,,\qquad \base N=(1,0)\,,\qquad \base v=(0,\base v_2)\,,\qquad \base H=(0,\base
H_2)\,,\qquad \base \Hc=(0,\base\Hc_2)\,,\qquad \base\Ec=0\,. \]
To simplify notations, from now on we will omit subscripts in  $\base v_2, \base H_2, \base \Hc_2$, and we will denote by  $\base U$ the variable $\base U=(\base q, \base v_2, \base H_2, \base S)=(\base q, \base v, \base H, \base S)\in\R^4$. We denote by $\base\rho$ the corresponding value of the density, $\base\rho=\rho(\base q -\base H_2^2/2,\base S)$, and with $\base c$ the sound speed defined by $\base c=\sqrt{p'_\rho(\base\rho,\base S)}$.


After the linearization, the system for  $\Wc$ in $\Omega^+$ becomes
\[ \eps\p_t \Wc + B_1\p_1\Wc + B_2\p_2\Wc = 0\,, \]
where
\[ B_1 =  \begin{pmatrix}
0 & 0 & 0 \\
0 & 0 & 1 \\
0 & 1 & 0
\end{pmatrix}\,, \qquad B_2 = \begin{pmatrix}
0 & 0 & 1 \\
0 & 0 & 0 \\
1 & 0 & 0
\end{pmatrix}\,. \]
The system for~$U$ is instead
\[ \sum_{j=0,1,2} A_j(\base U)\p_j U =0\,, \]
where, setting $\rhop\doteq (\base\rho\ \! \base c^2)^{-1}$, we have
\begin{align*}
A_0(\base U)
	& =\begin{pmatrix}
	\rhop & 0 & 0 & 0 & -\rhop\base{H} & 0 \\
	0 & \base{\rho} & 0 & 0 & 0 & 0 \\
	0 & 0 & \base{\rho} & 0 & 0 & 0 \\
	0 & 0 & 0 & 1 & 0 & 0 \\
	-\rhop\base{H} & 0 & 0 & 0 & 1 + \rhop\base{H}^2 & 0 \\
	0 & 0 & 0 & 0 & 0 & 1
	\end{pmatrix},\\
A_1(\base U)
	& = \begin{pmatrix}
	0 & 1 & 0 & 0 & 0 & 0 \\
	1 & 0 & 0 & 0 & 0 & 0 \\
	0 & 0 & 0 & 0 & 0 & 0 \\
	0 & 0 & 0 & 0 & 0 & 0 \\
	0 & 0 & 0 & 0 & 0 & 0 \\
	0 & 0 & 0 & 0 & 0 & 0
\end{pmatrix}, \\
A_2(\base U)
	& = \begin{pmatrix}
	\rhop\base{v} & 0 & 1 & 0 & -\rhop\base{v}\base{H} & 0 \\
	0 & \base{\rho} \base{v} & 0 & -\base{H} & 0 & 0 \\
	1 & 0 & \base{\rho} \base{v} & 0 & -\base{H} & 0 \\
	0 & -\base{H} & 0 & \base{v} & 0 & 0 \\
	-\rhop\base{v}\base{H} & 0 & -\base{H} & 0 & (1 + \rhop\base{H}^2)\base{v} & 0 \\
	0 & 0 & 0 & 0 & 0 & \base{v}
	\end{pmatrix}.
\end{align*}
Observe that the equation for the entropy can now be decoupled from the system, since the coefficients depend just on the basic state:
\[ S_t + \base{v} S_2 =0\,. \]
From now on, we will omit the component ~$S$ in~$U$, and will consider the new variable
\begin{align*}
U=(q,v_1,v_2,H_1,H_2)\tm \in\R^5
\end{align*}
 (for simplicity, we keep the same notation $U$). Similarly, $A_j(\base U)$ will denote the  $5\times5$ north-west block in the previous matrices, while  $\base U$ is still the element of~$\R^4$ written above.

The linearized boundary conditions are
\begin{align} \varphi_t = v_1-\base v\varphi_2 +g_1 \,,\qquad q = \base\Hc \Hc_2 + g_2 \,,\qquad \Ec =
-\eps\base\Hc\varphi_t + g_3 \,\label{lbc}\end{align}
(the other conditions, i.e. the normal components of the magnetic fields equal to zero, can be recovered by imposing them at the initial time).

We set $V=(U,\Wc)\tm$ and distinguish between characteristic ($V^\car$) and noncharacteristic ($V^\nc$) variables
\[ V^\car =(v_2,H_1,H_2,\Hc_1) \,, \qquad V^\nc = (q,v_1,\Hc_2,\Ec)\,. \]
Notice that characteristic variables do not appear in the boundary conditions. If we define
\[ \Lc = \begin{pmatrix}
\sum_{j=0,1,2} A_j(\base{U})\p_j & 0 \\
0 & \eps\p_t+B_1\p_1+B_2\p_2
\end{pmatrix} \equiv \sum_{j=0,1,2} \Ac_j(\base{U})\p_j ,\]
\[ M = \begin{pmatrix}
0 & -1 & 0 & 0 \\
1 & 0 & -\base\Hc & 0 \\
0 & 0 & 0 & 1
\end{pmatrix}, \qquad b(\p_t,\p_2)= \begin{pmatrix}
\p_t + \base v \p_2 \\
0 \\
\eps\base\Hc\p_t
\end{pmatrix}, \]
\[
\Omega=\R\times\Omega^+,\qquad \omega=\R\times\Gamma,\] 
the inhomogeneous linearized problem can be written as
\begin{align}
& \Lc V = f\,, \qquad \text{in} \quad \Omega \label{prob.in}\\
& \Bc (V^\nc,\varphi) \doteq M V^\nc + b \varphi = g\,, \qquad \text{on} \quad \omega\,  . \label{bc}
\end{align}

\medskip

\section{Main results}\label{Main results}

First, we define some function spaces and the corresponding norms.

\begin{Def}
For every~$s\geq0$ and $\gamma\geq1$, we set
\[ H^s_\gamma(\R^2)\doteq \{ u\in\Dc'(\R^2_{t,x_2}): \ e^{-\gamma t}u \in H^s(\R^2_{t,x_2}) \} ,\]
with (equivalent) norm
\[ \|u\|_{H^s_\gamma(\R^2)}^2 \doteq \|e^{-\gamma t}u\|_{s,\gamma}^2,\qquad \|v\|_{s,\gamma} \doteq \int_{\R^2} (\gamma^2+\delta^2+\eta^2)^s |\fou v(\delta,\eta)|^2 \,d\delta d\eta, \]
where $\hat v$ is the Fourier transform of $v$ and ~$\delta,\eta$ are the dual variables of~$t,x_2$.

We also define~$L^2((0,\infty),H^s_\gamma(\R^2))$, briefly denoted by~$L^2(H^s_\gamma)$, as the space of distributions with finite $L^2(H^s_\gamma)$-norm, where
\[ \|u\|_{L^2(H^s_\gamma)}^2 \doteq \int_0^\infty \|u(\cdot,x_1,\cdot)\|_{H_\gamma^s(\R^2)}^2 dx_1 , \]
and set
\[ \|v\|_{0,s,\gamma}^2 \doteq \int_0^\infty \|v(\cdot,x_1,\cdot)\|_{s,\gamma}^2 dx_1\,. \]

Finally, we set $L^2_\gamma(\R^2)=H^0_\gamma(\R^2)$ and $L^2_\gamma(\Omega)=L^2(H^0_\gamma)$. Notice that ~$\|u\|_{L^2(H^s_\gamma)}=\|e^{-\gamma t}u\|_{0,s,\gamma}$ and~$\|u\|_{L^2_\gamma}=\|e^{-\gamma t}u\|_{L^2}$.
\end{Def}

\textbf{Assumptions on the basic state.} We assume:
\begin{gather}\label{eq:Hvbase}
\base v \neq0\,, \qquad \base \Hc \neq 0\,, \\
\label{eq:Hrbase} \base \rho >0\,, \qquad \base H \neq 0\,, \\
\label{eq:notnull}  \rhop\base H^2 \neq 1\,, \\
\label{eq:notalfven} |\base v| \neq \dfrac{|\base H|}{\sqrt{\base \rho}}\,,\\
\label{eq:notvH} |\base v| \neq  \dfrac{|\base H|}{\sqrt{\base\rho(1+\base\alpha\base H^2)}}\,, \\
\label{eq:notres} \base\rho\base v^2\neq\ \dfrac{\rhop (\base H^4 - \base \Hc^4) + 2\base H^2 \pm \sqrt{\rhop^2(\base H^4-\base \Hc^4)^2+4\base \Hc^4}}{2(1+\rhop \base H^2)} \quad \text{ if }\quad \base H^2<\min\{\rhop^{-1}, \base\rho\base v^2\}\,,
\end{gather}
where \eqref{eq:notnull} and \eqref{eq:notalfven} respectively mean that the sound speed $\base c$ and the velocity $\base v$ do not coincide with the Alfv\'en velocity ${|\base H|}/{\sqrt{\base \rho}}$.

The usefulness of these conditions will be clear in the following, but we observe immediately that they avoid trivial cases (vanishing velocity or magnetic fields) and prevent the appearance of resonances, i.e. multiple roots of the Lopatinski\u{\i} determinant {(see Section~\ref{sec.lopatinskii}; \eqref{eq:notres} corresponds to \eqref{eq:nottau}).}

The main result that we obtain can be stated as follows.
\begin{Thm}\label{Thm:main0}
Assume that the conditions {\eqref{eq:Hvbase}--\eqref{eq:notres}}  on the basic state are satisfied.

Then there exist constants~$\eps_0>0,C>0$ such that, for every~$0<\eps<\eps_0$, $\gamma\geq1$ and every function~$(V,\varphi)\in H_\gamma^2(\Omega)\times H_\gamma^2(\Gamma)$, there holds the estimate
\[ \gamma\|V\|_{L^2_\gamma(\Omega)}^2 + \|V^\nc_{x_1=0}\|_{L^2_\gamma(\Gamma)}^2 + \|\varphi\|_{H^1_\gamma(\Gamma)}^2\leq C \left( \frac1{\gamma^3}\|\Lc V\|_{L^2(H^1_\gamma)}^2+\frac1{\gamma^2}\|\Bc (V^\nc,\varphi)\|_{H^1_\gamma(\Gamma)}^2 \right).
\]

\end{Thm}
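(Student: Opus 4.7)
The plan is to follow the normal-modes / Kreiss symmetrizer strategy announced in the introduction and inspired by \cite{coulombel1, kreiss, nappes, WYu}, adapting it to the present characteristic and weakly stable situation. First I would perform a Laplace transform in $t$, with dual variable $\tau=\gamma+i\delta$, and a Fourier transform in $x_2$, with dual variable $\eta$, on both sides of \eqref{prob.in}--\eqref{bc}. The symbol of $b(\partial_t,\partial_2)$ in \eqref{bc} is elliptic as an operator to $\hat\varphi$, which allows one to express $\hat\varphi$ algebraically in terms of the trace of the transformed noncharacteristic unknown at $x_1=0$ and of the boundary source $\hat g$; this eliminates the free front and leaves a boundary value problem for $\widehat{V^\nc}$ only. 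Manipulating the interior equation $\Lc V=f$ and using the fact that the characteristic variables $V^\car$ can be recovered algebraically from $V^\nc$ modulo interior source terms, one obtains a first order ODE system
\[
\frac{dW}{dx_1}=\Ac(\tau,\eta)\,W+\hat F,\qquad x_1>0,
\]
where $W$ collects the Fourier--Laplace transforms of $V^\nc$ on both plasma and vacuum blocks. Because the boundary is characteristic, $\Ac$ carries poles in $(\tau,\eta)$ coming from the non-invertibility of $\tilde A_1$ on the plasma side and from an analogous mechanism on the vacuum block.

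The second step would be the microlocal analysis on the hemisphere $\{\Re\tau\ge 0\}\times\R_\eta$ after the standard homogeneous normalization. Following the computation announced for Section~\ref{sec.lopatinskii}, the Kreiss--Lopatinski\u{\i} determinant has no zero in the open half plane $\{\gamma>0\}$, and the assumptions \eqref{eq:Hvbase}--\eqref{eq:notres} ensure that its zeros on $\{\gamma=0\}$ are simple. I would then cover the hemisphere by finitely many neighbourhoods of four types: (i) points where $\Ac$ is regular and the uniform Kreiss--Lopatinski\u{\i} condition holds; (ii) regular points of $\Ac$ at which the determinant has a simple zero; (iii) poles of $\Ac$ at which the Lopatinski\u{\i} condition holds; (iv) poles of $\Ac$ that are simultaneously simple zeros of the determinant. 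On each neighbourhood I would construct a smooth bounded Hermitian symmetrizer $r(\tau,\eta)$. On type (i) pieces the classical construction of \cite{kreiss,chazarain-piriou} applies; on type (ii) pieces I would employ the degenerate Kreiss symmetrizer of \cite{coulombel1,nappes}, whose boundary quadratic form vanishes to first order in the distance to the root and accounts for the $\gamma^{-2}$ loss on the boundary and the $\gamma^{-3}$ loss on the interior source; on type (iii) pieces I would conjugate by the singular transformation of Majda and Osher \cite{majda-osher,WYu} so as to isolate and absorb the pole of $\Ac$ before building a non-degenerate symmetrizer.

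The main obstacle, as the authors themselves emphasize, lies in type (iv) points, where a pole of $\Ac$ and a simple zero of the Kreiss--Lopatinski\u{\i} determinant fall on top of each other. There I would have to combine the Majda--Osher rescaling with the degenerate symmetrizer of \cite{nappes}; the crux is to check that, once the singular conjugation is performed, the resulting determinant still has a simple zero in the natural local coordinate, so that a degenerate symmetrizer can be built with precisely the same $\gamma^{-2}$ and $\gamma^{-3}$ weights as in type (ii), and patched to its neighbours via a partition of unity. Condition \eqref{eq:notres} is the structural assumption that forces these coincidences to be simple and makes the patching possible; its precise role should show up as the non-vanishing of a determinant that appears in the combined construction.

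With all local symmetrizers assembled, the final step is routine. For each piece of a dyadic partition of unity on the hemisphere, one integrates the identity
\[
\frac{d}{dx_1}\langle r W,W\rangle=\langle(r\Ac+\Ac^* r)\,W,W\rangle+2\,\Re\langle r W,\hat F\rangle
\]
from $0$ to $\infty$, uses the lower bound on $r\Ac+\Ac^* r$ together with the lower bound on the boundary quadratic form $\langle r W,W\rangle$ at $x_1=0$ restricted to the kernel of the reduced boundary operator, and obtains the microlocal estimate
\[
\gamma\,\|W\|_{0,0,\gamma}^2+\|W(0,\cdot)\|_{0,\gamma}^2\le C\bigl(\gamma^{-3}\,\|\hat F\|_{0,1,\gamma}^2+\gamma^{-2}\,\|\hat g\|_{1,\gamma}^2\bigr).
\]
Summing over the partition and invoking Plancherel yields the bound on $V^\nc$ at $x_1=0$ and on $V$ in $L^2_\gamma(\Omega)$. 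The ellipticity of $b$ upgrades the trace bound to the $H^1_\gamma$ estimate of $\varphi$, while the characteristic part $V^\car$ is recovered from $\Lc V=f$ using the already obtained estimate on $V^\nc$. Finally, the smallness of $\eps$ is used throughout to treat the electric coupling between vacuum and plasma as a uniform perturbation in the symmetrizer construction, which is what forces the restriction $0<\eps<\eps_0$ in the statement.
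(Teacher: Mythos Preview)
Your overall architecture is right, but two concrete ingredients of the paper's argument are missing from your plan, and one of them is essential.

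\textbf{Partial homogenization before the ODE analysis.} The paper does \emph{not} carry the interior source $\hat F$ through the ODE system. Instead, it first solves an auxiliary problem $\Lc^\gamma V_1=f$ with the strictly (maximally) dissipative boundary conditions $q=0$, $\Ec=0$; this gives $\gamma\|V_1\|^2\le C\gamma^{-1}\|f\|^2$ and $\|V_1^{\nc}|_{x_1=0}\|_{1,\gamma}^2\le C\gamma^{-1}\|f\|_{0,1,\gamma}^2$, and reduces the problem for $V_2=V-V_1$ to the \emph{homogeneous} system $\Lc^\gamma V_2=0$. Only then is the Fourier--Laplace transform taken and the ODE for $\widehat{V^{\nc}}$ derived. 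This is not cosmetic: at the poles $\mu=0$ and $\tau=0$ the transformed system has genuinely singular coefficients (e.g.\ the off-diagonal entry $-2m_2$ in \eqref{65} blows up), and the paper's way around this is that, \emph{in the homogeneous case}, the unstable components $U_{i,2},U_{i,4}$ are $L^2$ solutions of $dU/dx_1=\omega_j U$ with $\Re\omega_j>0$, hence identically zero, so the singular term $-2m_2 U_{i,2}$ disappears and one is left with a regular decoupled system for the stable components. With a nonzero $\hat F$ in the ODE this cancellation fails, and your symmetrizer identity with $2\Re\langle rW,\hat F\rangle$ would have to cope with unbounded $r$ or unbounded conjugations near the pole; you do not explain how.

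\textbf{Treatment of the pole--zero coincidences.} Your type~(iv) points are exactly $\mu=0$ and $\tau=0$: the factor $\mu\,\eps\tau$ sits in front of the Lopatinski\u{\i} determinant, so these poles of $\Ac$ are \emph{always} zeros of $\triangle$, independently of the data. The paper does \emph{not} build a symmetrizer there. After the change of variables \eqref{defLambda} (resp.\ \eqref{defLambda'}) and the conjugation \eqref{65} (resp.\ \eqref{65''}), it argues directly on the ODE: the unstable components vanish as above, the stable ones satisfy $dU/dx_1=-\omega_j U$ with $\Re\omega_j\ge\kappa(|\tau|^2+\eta^2)^{1/2}$ (or $\ge\kappa\gamma$), and the boundary condition becomes a $2\times2$ linear system whose determinant is shown to satisfy $|\Delta'|\ge C\gamma(|\tau|^2+\eta^2)^{-1/2}$. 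This produces the same $\gamma^{-2}$ loss as a degenerate symmetrizer would, but without having to construct one. Your proposal to ``combine the Majda--Osher rescaling with the degenerate symmetrizer of \cite{nappes}'' is plausible but speculative; the paper's direct argument is what actually closes. Note also that condition~\eqref{eq:notres} does not control the simplicity of these pole--zero coincidences; it prevents the \emph{other} roots of $\triangle$ (those of the factor $\mu\chi a_{12}-\chi\omega_1\omega_2\dot\Hc^2$) from hitting the pole $\tau=0$, so that $\tau=0$ stays a \emph{simple} root.

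\textbf{A case you omitted.} Besides your four types, there are boundary points where $\omega_1=0$ or $\omega_2=0$ (i.e.\ $a_{12}=0$, $a_{21}=0$, or $a_{34}=0$); there $\Ac$ is not diagonalizable and a Jordan block appears. The Lopatinski\u{\i} condition holds at these points, but the symmetrizer has to be built in the style of Ralston/Kreiss for nilpotent blocks (Subsection~\ref{ssec:nodiag} in the paper), using Lemma~\ref{prop:derivgamma} to secure $\partial_\gamma(\omega_j^2)\in i\R\setminus\{0\}$. This is a separate construction from your types (i)--(iv).
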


Now, for any~$\gamma\geq1$, let us define
\[ \Lc^\gamma \doteq \Lc + \gamma \Ac_0\,, \qquad b^\gamma(\p_t,\p_2) \doteq b (\p_t+\gamma,\p_2)\,, \qquad \Bc^\gamma (V^\nc,\varphi) \doteq M V^\nc + b^\gamma \varphi = g \,. \]
It is easily shown that Theorem~\ref{Thm:main0} admits the following equivalent proposition.
\begin{Prop}\label{Thm:main}
Assume that the conditions {\eqref{eq:Hvbase}--\eqref{eq:notres}} on the basic state are satisfied.
	
Then	 there exist constants~$\eps_0>0,C>0$ such that, for every~$0<\eps<\eps_0$, $\gamma\geq1$ and every function~$(V,\varphi)\in H^2(\Omega)\times H^2(\Gamma)$, there holds the estimate
\begin{align}
 \gamma\|V\|_{L^2(\Omega)}^2 + \|V^\nc_{x_1=0}\|_{L^2(\Gamma)}^2 + \|\varphi\|_{1,\gamma}^2\leq C \left( \frac1{\gamma^3}\|\Lc^\gamma V\|_{0,1,\gamma}^2+\frac1{\gamma^2}\|\Bc^\gamma (V^\nc,\varphi)\|_{1,\gamma}^2 \right).\label{main-est}
\end{align}
\end{Prop}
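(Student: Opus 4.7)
The plan is to follow the standard Kreiss symmetrizer approach, adapted for a characteristic free boundary with weak stability and with some poles of the symbol that coincide with roots of the Kreiss--Lopatinski\u\i\ determinant. As a preliminary reduction, I would apply the Laplace transform in $t$ (with dual variable $\tau=\gamma+i\delta$) and the Fourier transform in $x_2$ (with dual variable $\eta$) to the linearized system \eqref{prob.in}--\eqref{bc}; this is exactly what Section \ref{reductions} sets up. The vector $b(\tau,\eta)$ acting on $\fou\varphi$ in the transformed boundary conditions is elliptic, so $\fou\varphi$ can be solved algebraically in terms of $\fou g$ and the trace of $\fou V^\nc$ at $x_1=0$. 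This eliminates the front and leaves a first-order ODE system in $x_1$ for the noncharacteristic part $\fou V^\nc(\tau,\eta,x_1)$, with a matrix symbol $\mathbb{A}(\tau,\eta)$ whose entries are rational in $(\tau,\eta)$. The characteristicity of the boundary is responsible for the poles of $\mathbb{A}$; the characteristic components $\fou V^\car$ can be recovered from $\fou V^\nc$ up to standard algebraic manipulations.

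Next, I would perform the normal-mode analysis: diagonalise $\mathbb{A}$ on the hemisphere $\{\Re\tau\geq 0\}$, split the stable and unstable subspaces, and write down the reduced boundary condition $\beta(\tau,\eta)\fou V^\nc_s=\fou g$ on the stable modes. I would then compute the Kreiss--Lopatinski\u\i\ determinant $\Delta(\tau,\eta)=\det\beta$, as in Section \ref{sec.lopatinskii}. The assumptions \eqref{eq:Hvbase}--\eqref{eq:notvH} ensure that $\Delta$ has no zero in the interior of the frequency half-space (so violent instability is excluded, in accordance with Theorem \ref{Thm:main0}), while its zeros on $\{\gamma=0\}$ give weak stability. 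The resonance condition \eqref{eq:notres} is exactly what guarantees that these boundary roots are \emph{simple} and that, at those points where a pole of $\mathbb{A}$ coincides with a root of $\Delta$, both the pole and the root retain multiplicity one.

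The crux of the proof is the construction of a degenerate Kreiss symmetrizer $R(\tau,\eta)$ for the reduced boundary value problem. Following the blueprint of \cite{kreiss,ralston,chazarain-piriou,coulombel1,nappes}, I would cover the compactified hemisphere $\{\Re\tau\geq 0, (\tau,\eta)\neq 0\}$ by a finite atlas and build $R$ locally on each chart: on charts where $\mathbb{A}$ is smoothly block-diagonalisable and $\Delta\neq 0$, a standard block-diagonal Hermitian symmetrizer works; at the boundary zeros of $\Delta$, the simple-root structure permits the usual construction yielding a loss of exactly one derivative on the boundary data, which accounts for the weight $1/\gamma^2$ in front of $\|\Bc^\gamma(V^\nc,\varphi)\|_{1,\gamma}^2$. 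The serious difficulty, and the main obstacle, is the construction at the coincident pole/root points, where $\mathbb{A}$ itself is singular and the classical block-structure arguments break down. To overcome this I would adopt the Majda--Osher device \cite{majda-osher} (as used in \cite{WYu}): multiply the relevant equations by a scalar factor vanishing at the offending frequency to regularise $\mathbb{A}$, construct the symmetrizer for the regularised symbol, and absorb the vanishing factor into an extra power of $\gamma$ in the final estimate; this is precisely the mechanism that produces the weight $1/\gamma^3$ against $\|\Lc^\gamma V\|_{0,1,\gamma}^2$. A smooth partition of unity then glues the local symmetrizers into a global $R(\tau,\eta)$ satisfying the required Kreiss inequalities.

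Finally, I would apply the symmetrizer in the standard way: integrate by parts in $x_1$ the quantity $\Re\langle R\fou V^\nc,\partial_{x_1}\fou V^\nc-\mathbb{A}\fou V^\nc\rangle$, Plancherel in $(\delta,\eta)$, and deduce the weighted control of $\|V^\nc_{|x_1=0}\|_{L^2(\Gamma)}$ and of $\gamma^{1/2}\|V\|_{L^2(\Omega)}$ in terms of the right-hand sides of \eqref{main-est}. The characteristic components $V^\car$ in the interior estimate are recovered from $V^\nc$ via the algebraic part of the system, losing no derivatives. Feeding the boundary trace of $V^\nc$ back into the elliptic boundary operator for $\varphi$ yields the $\|\varphi\|_{1,\gamma}^2$ bound and completes \eqref{main-est}. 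The equivalence between the estimate in Proposition \ref{Thm:main} and that in Theorem \ref{Thm:main0} is then the standard substitution $V\mapsto e^{\gamma t}V$, $\varphi\mapsto e^{\gamma t}\varphi$, under which $\Lc^\gamma$ corresponds to $e^{-\gamma t}\Lc e^{\gamma t}$ and the weighted norms $\|\cdot\|_{s,\gamma}$ coincide with $\|e^{-\gamma t}\cdot\|_{H^s_\gamma}$.
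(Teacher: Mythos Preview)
Your outline captures the general architecture (Fourier--Laplace, front elimination, normal modes, Lopatinski\u\i\ determinant, microlocal symmetrizer, partition of unity), but there are two genuine gaps.

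\textbf{Partial homogenization is missing, and the origin of the $1/\gamma^3$ weight is misattributed.} The paper does not run the symmetrizer on the inhomogeneous system. It first solves an auxiliary problem $\Lc^\gamma V_1=f$ with maximally dissipative boundary conditions ($q=0$, $\Ec=0$), obtaining $\|V_1^\nc|_{x_1=0}\|_{1,\gamma}^2\leq C\gamma^{-1}\|f\|_{0,1,\gamma}^2$, and then analyses the \emph{homogeneous} problem for $V_2=V-V_1$. The $1/\gamma^3$ in front of $\|\Lc^\gamma V\|_{0,1,\gamma}^2$ is the product of this $1/\gamma$ with the $1/\gamma^2$ coming from the simple boundary roots of $\Delta$; it has nothing to do with the pole handling. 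Your plan to integrate $\Re\langle R\fou V^\nc,\partial_{x_1}\fou V^\nc-\mathbb A\fou V^\nc\rangle$ directly with a nonzero source does not work as written, because at the poles $\mathbb A$ is not even defined and no uniform symmetrizer inequality of Kreiss type is available there for the inhomogeneous ODE.

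\textbf{The treatment of the poles that are simultaneously Lopatinski\u\i\ roots is not a symmetrizer construction.} At $\mu=0$ and $\tau=0$ the paper does \emph{not} ``multiply by a scalar factor to regularise $\mathbb A$ and build a symmetrizer''. Instead, after a linear change of variables it reduces the homogeneous ODE to the form $\d1 U=\Ac' U$ with $\Ac'$ upper triangular: the off-diagonal entry still carries the pole, but the \emph{unstable} components $U_{i,2},U_{i,4}$ solve $\d1 U_{i,j}=\omega_j U_{i,j}$ with $\Re\omega_j>0$, hence vanish identically in $L^2(\R^+)$. This kills the singular coupling, and the remaining stable components satisfy decoupled decaying ODEs from which the trace estimate follows by inverting the $2\times2$ boundary matrix (its determinant has a simple zero, giving the same $1/\gamma^2$ loss as at the other weak-stability points). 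The Majda--Osher/Wang--Yu style symmetrizer is used only at the pole~\eqref{eq:caso2}, where the Lopatinski\u\i\ condition \emph{holds}, and it yields no extra loss. Also, the interior control of the full $V$ (including $V^\car$) is not obtained by algebraic recovery from $V^\nc$---those relations have poles too---but by the standard symmetric-hyperbolic energy identity on the homogeneous system, which gives $\gamma\|V\|_{L^2(\Omega)}^2\leq C\|V^\nc|_{x_1=0}\|_{L^2}^2$ directly.
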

The rest of the paper is devoted to the proof of Theorem~\ref{Thm:main0} in the equivalent form of Proposition \ref{Thm:main}.

\section{Some reductions}\label{reductions}

\textbf{Partial homogenization.} As in \cite{nappes,majda-osher}, in order to prove \eqref{main-est} we first remove the forcing term $f$ in~$\Lc^\gamma V=f$.
%
Given $(V,\varphi)\in H^2(\Omega)\times H^2(\Gamma)$, set $f\doteq\Lc^\gamma V \in H^1(\Omega), g\doteq\Bc^\gamma (V^\nc,\varphi)\in H^1(\Gamma)$ and consider the auxiliary problem
\[\begin{cases}
\Lc^\gamma V_1 = f\,, & \quad x_1>0\,,\\
M^\aux V_1^\nc =0\,, & \quad x_1=0\,,
\end{cases}\]
where
\[ M^\aux=\begin{pmatrix}
1 & 0 & 0 & 0 \\
0 & 0 & 0 & 1
\end{pmatrix},
 \]
which corresponds to the boundary conditions  $q=0$ and $\Ec=0$. Hence, if $V_1=(U,\Wc)\tm$, at the boundary we have $A_1 U\cdot U = 2qv_1 = 0$ and $B_1 \Wc \cdot \Wc=2\Ec \Hc_2=0$, so that the boundary matrix $-\Ac_1(\base U)$ made of $A_1$ and $B_1$ is nonnegative. Moreover, it is maximally nonnegative, since both  $A_1$ and $B_1$ have exactly one positive ($\lambda=1$) and one negative ($\lambda=-1$) eigenvalue, while the other ones equal zero. This means that the stable subspace (corresponding to the negative eigenvalues) has dimension 2, which is the correct dimension that provides maximality.

Since the boundary conditions are maximally dissipative, the standard theory for hyperbolic systems \cite{lax-phillips} guarantees the existence of a unique solution $V_1\in L^2(\R^+;H^1(\Gamma))$ of the problem above, with trace of~$V_1^\nc$ in~$H^1(\Gamma)$ and
\begin{equation}
\begin{array}{ll}\label{stimaV1}
\ds \gamma \|V_1\|^2_{L^2(\Omega)} \leq \frac{C}\gamma \|f\|^2_{L^2(\Omega)}\,, \qquad
\|V_1^\nc |_{x_1=0}\|^2_{1,\gamma}\leq \frac{C}\gamma \|f\|^2_{0,1,\gamma}\,.
\end{array}
\end{equation}
We obtain that ~$V_2\doteq V-V_1$ satisfies the system
\[\begin{cases}
\Lc^\gamma V_2 = 0\, , & \quad  x_1>0\,, \\
\Bc^\gamma (V_2^\nc,\varphi) = g - MV_1^\nc|_{x_1=0}\,  & \quad x_1=0\,,
\end{cases}\]
where
\begin{equation}
\begin{array}{ll}\label{stimag1}

\ds \|g - MV_1^\nc|_{x_1=0}\|^2_{1,\gamma}\le 2\|g\|^2_{1,\gamma}+\frac{C}{\gamma} \|f\|^2_{0,1,\gamma}.

\end{array}
\end{equation}
Consequently, it will be sufficient to prove \eqref{main-est} in the case~$\Lc^\gamma V=0$. By abuse of notation, we continue to write~$g$ for~$g - MV_1^\nc|_{x_1=0}$ and~$V$ for~$V_2$.

\textbf{Eliminating the front.} We proceed as in \cite{nappes}. We perform a Fourier transform with respect to the variables $t,x_2$, whose dual variables will be $\delta,\eta$, of
\[ \Lc^\gamma V = 0 \,, \qquad M V^\nc + b^\gamma \varphi = g \,.\]
Setting \[\tau=\gamma+i\delta\,,\] we obtain
\begin{equation}
\label{15}
\begin{cases}
\bigr(\tau \Ac_0 + i\eta \Ac_2 + \Ac_1 \d1\bigl) \fou V = 0 & \quad x_1>0\,, \\
M \fou{V}^\nc(\delta,0,\eta) + b(\tau,i\eta) \fou\varphi (\delta,\eta) = \fou g (\delta,\eta) & \quad x_1=0\,,
\end{cases} 
\end{equation}
where
\[ b(\tau,i\eta)= \begin{pmatrix}
\tau + i \base v \eta \\
0 \\
\eps\base\Hc\tau
\end{pmatrix}, \]
which is homogeneous of degree~$1$ with respect to $(\tau,\eta)$. In order to take into account homogeneity, we consider the hemisphere
\[ \Sigma = \left\{ (\tau,\eta) \in \C \times \R : \ |\tau|^2 + \eta^2 =1\,, \ \Re \tau \geq0 \right\}\,, \]
and set
\[ \Xi = \left\{ (\gamma,\delta,\eta) \in [0,+\infty)\times\R^2 \right\} \setminus \{(0,0,0)\} = (0,+\infty)
\cdot \Sigma \, \]
(essentially, we are partitioning the halfspace  $\Xi$ using hemispheres with radius~$r\in(0,+\infty)$).
Notice that the symbol  $b(\tau,i\eta)$ is elliptic, i.e. it is always different from zero on~$\Sigma$.

We set $k=\sqrt{|{\tau}|^2+\eta^2}$ and define, in $\Xi$,
\begin{align} Q = \frac{1}{k}\begin{pmatrix}
0 & k & 0 \\
-\eps\base \Hc \tau & 0 & \tau + i \base v \eta \\
\bar\tau - i \base v \eta & 0 & \eps\base \Hc \bar \tau
\end{pmatrix}, \label{Q} \end{align}
where the bar sign denotes complex conjugation, so that $Q\in\mathcal{C}^\infty(\Xi,GL_3(\C))$ is homogeneous of degree~$0$ in $(\tau,\eta)$ and
 $Q(\tau,\eta)b(\tau,i\eta)=(0,0,\theta(\tau,\eta))$ is homogeneous of degree~$1$, with
\[ \theta(\tau,\eta) 	 = k^{-1} |b(\tau,i\eta)|^2\, , \qquad  \min_\Sigma |\theta(\tau,\eta)| >0\,, \]
since the third line of $Q$ is $k^{-1}\bar b(\tau,i\eta)\tm$.
More precisely, on  $\Sigma$ (where $k=1$), we have
\[ \theta(\tau,\eta) = (1+\eps^2\base \Hc^2) |\tau|^2 + \base v^2\eta^2 +2\base v\eta\delta = \gamma^2 +
\eps^2\base\Hc^2|\tau|^2+ (\delta+\base v\eta)^2 \,,\]
which is zero if and only if
\[ \begin{cases}
\gamma=0\,,\\
\base\Hc^2\delta^2=0\,,\\
\delta=-\base v\eta\,;
\end{cases} \]
thanks to~\eqref{eq:Hvbase}, $\theta>0$ on~$\Sigma$.

From the boundary conditions in \eqref{15} we have
\begin{align} \label{eq.elimfront} \begin{pmatrix}
0 \\
0 \\
\theta(\tau,\eta)
\end{pmatrix} \fou \varphi (\delta,\eta)+ \begin{pmatrix}
\beta(\tau,\eta) \\
\ell(\tau,\eta)
\end{pmatrix} \fou V^\nc (\delta,0,\eta) = Q(\tau,\eta)\fou g(\delta,\eta)\,, \end{align}
where~$\beta$ is a $2\times4$ matrix and $\ell$ is a covector in~$\R^4$ such that
\[ Q(\tau,\eta) M = \begin{pmatrix}
\beta(\tau,\eta) \\
\ell(\tau,\eta)
\end{pmatrix}\,. \]

From now on, we assume to work on $\Sigma$, where $k=1$ (otherwise the first line of $\beta$ below has to be multiplied by $k$), and we have
\begin{align*}
\det Q
	& = \eps^2\base \Hc^2 |\tau|^2 + |\tau+i\base v\eta|^2 = |b(\tau,i\eta)|^2\,, \\
\theta(\tau,\eta)
	& = |b(\tau,i\eta)|^2 \,,\\
\beta
	& = \begin{pmatrix}
1 & 0 & -\base \Hc & 0 \\
0 & \eps\base \Hc \tau & 0 & \tau + i\base v\eta
\end{pmatrix}\,,\\
\ell
	& = \begin{pmatrix}
0 & -\bar\tau + i \base v \eta & 0 & \eps\base \Hc \bar \tau
\end{pmatrix}\,.
\end{align*}
The third line of \eqref{eq.elimfront} is
\begin{align*}
\theta(\tau,\eta)\fou\varphi+\ell(\tau,\eta)\fou V^\nc(0)=\bar b(\tau,i\eta)\tm\fou g\, .
\end{align*}
Now, we proceed as in \cite{nappes}. From the ellipticity of $b$ and the uniform boundedness of $\ell$ and $b$,  we obtain
\begin{align*}
k^2|\fou\varphi|^2\leq C(|\fou V^\nc(0)|^2+|\fou g|^2)
\end{align*}
in $\Xi$. Integrating over $(\delta,\eta)\in \R^2$ and using Plancherel's Theorem, we deduce
\begin{align}\label{stimaphi}
\ds \|\varphi\|_{1,\gamma}^2\leq C \Bigl( \|V^\nc_{x_1=0}\|_{L^2(\R^2)}^2 + \frac{1}{\gamma^2} \|g\|_{1,\gamma}^2   \Bigr).
\end{align}
Consequently, in order to prove Proposition~\ref{Thm:main}, it is sufficient to consider
\begin{align} \label{eq.trasf}
\bigr(\tau \Ac_0 + i\eta \Ac_2 + \Ac_1 \d1\bigl) \fou V = 0 \,  & \quad x_1>0\,, \\
\beta(\tau,\eta)\fou V^\nc=Q(\tau,\eta)\fou g\,  & \quad x_1=0\,,\label{eq.trasfbdry}
\end{align}
neglecting the front term in the boundary condition. 
\begin{Lem}
Assume that there exists a positive constant $C$ such that the solution of \eqref{eq.trasf}, \eqref{eq.trasfbdry} fulfills the estimate
\begin{equation}
\begin{array}{ll}\label{stimaV2bdry}
\ds \|V^\nc_{x_1=0}\|_{L^2(\R^2)}^2\leq  \frac{C}{\gamma^2} \|g\|_{1,\gamma}^2
 \,.
\end{array}
\end{equation}
Then the thesis of Propositions~\ref{Thm:main} is satisfied.
\end{Lem}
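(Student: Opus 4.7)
The plan is to assemble the full energy estimate of Proposition~\ref{Thm:main} from the reduced hypothesis~\eqref{stimaV2bdry} by combining it with the three preparatory bounds already established in the partial homogenization step: the interior/boundary estimates~\eqref{stimaV1} for the auxiliary solution $V_1$, the transfer estimate~\eqref{stimag1} for the modified boundary datum, and the front-elimination bound~\eqref{stimaphi} for $\varphi$. The only nontrivial ingredient that still has to be produced is an interior $L^2$ bound for the homogeneous piece $V_2 = V - V_1$ in terms of its noncharacteristic trace.

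Concretely, I would first apply the hypothesis to $V_2$, which by construction of the partial homogenization satisfies $\Lc^\gamma V_2 = 0$ with boundary datum $g' \doteq g - M V_1^\nc|_{x_1=0}$; after Fourier transforming in $(t,x_2)$ and eliminating $\fou\varphi$ via the ellipticity of the last line of~\eqref{eq.elimfront} (exactly as was done in the paragraph culminating in~\eqref{eq.trasf}-\eqref{eq.trasfbdry}), $\fou V_2$ solves the ODE system that~\eqref{stimaV2bdry} is hypothesized to bound. Combining with~\eqref{stimag1} gives
\begin{equation*}
\|V_2^\nc|_{x_1=0}\|_{L^2(\Gamma)}^2 \leq \frac{C}{\gamma^2}\|g'\|_{1,\gamma}^2 \leq \frac{C}{\gamma^2}\|g\|_{1,\gamma}^2 + \frac{C}{\gamma^3}\|f\|_{0,1,\gamma}^2\,,
\end{equation*}
and adding the trace part of~\eqref{stimaV1} (itself controlled by $C\gamma^{-3}\|f\|_{0,1,\gamma}^2$ after losing one $\gamma$-factor to pass from $\|\cdot\|_{1,\gamma}$ to $\|\cdot\|_{L^2}$) yields the full noncharacteristic trace bound on $V = V_1 + V_2$.

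Next I would produce the interior $L^2$ estimate for $V_2$ by a standard Friedrichs energy argument: taking the real inner product of $\Lc^\gamma V_2 = 0$ with $V_2$ in $\Omega$, using the symmetry of the constant matrices $\Ac_j$ to write $(\Ac_j \partial_j V_2, V_2) = \tfrac12 \partial_j(\Ac_j V_2 \cdot V_2)$, the decay at infinity coming from $V \in H^2$ and from the exponential weight, and the positive definiteness of $\Ac_0$, one obtains
\begin{equation*}
\gamma \|V_2\|_{L^2(\Omega)}^2 \leq \tfrac12 \int_\Gamma |\Ac_1 V_2 \cdot V_2|\,dt\,dx_2 \leq C\,\|V_2^\nc|_{x_1=0}\|_{L^2(\Gamma)}^2 \,,
\end{equation*}
the last inequality using that $\Ac_1$ has rows and columns identically zero on the characteristic components of $V$ (so the quadratic form on the boundary sees only $V^\nc$). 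Adding the interior part of~\eqref{stimaV1} then gives $\gamma\|V\|_{L^2}^2 \leq C\gamma^{-3}\|f\|_{0,1,\gamma}^2 + C\gamma^{-2}\|g\|_{1,\gamma}^2$.

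Finally, plugging the trace bound on $V^\nc|_{x_1=0}$ into~\eqref{stimaphi} controls $\|\varphi\|_{1,\gamma}^2$ by the same right-hand side, and summing the three contributions yields~\eqref{main-est}. I expect the only step needing any care is the energy identity for $V_2$: one must verify that $V$ (and hence $V_2$) has enough decay in $t$ and $x_2$ for the boundary terms at infinity to vanish, which follows from $V \in H^2(\Omega)$ together with the exponential $e^{-\gamma t}$ weight encoded in $\Lc^\gamma$, and must confirm that the quadratic form $\Ac_1 V \cdot V$ on $\Gamma$ involves only $V^\nc$, which is immediate from the block form of $A_1(\base U)$ in~\eqref{A1} and of $B_1$.
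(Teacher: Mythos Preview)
Your proposal is correct and follows essentially the same route as the paper: the key new ingredient is the Friedrichs-type energy identity for the homogeneous piece $V_2$, which the paper obtains by pairing~\eqref{eq.trasf} with $\fou V$ (the Fourier side of your physical-space computation), and the rest is the same combination of~\eqref{stimaV1}, \eqref{stimag1}, \eqref{stimaphi} with the hypothesis~\eqref{stimaV2bdry}. Your write-up is in fact more explicit than the paper's about why the boundary quadratic form $\Ac_1 V\cdot V$ sees only $V^\nc$ and about the $\gamma$-bookkeeping when passing between norms.
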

\begin{proof}
We recall that all matrices $\Ac_j$ are symmetric and that  $\Ac_0$ is positive definite. Taking the scalar product of \eqref{eq.trasf} with $\fou V$ and integrating yields the following inequality
\begin{equation}
\begin{array}{ll}\label{stimaV2}
\gamma \|V\|^2_{L^2(\Omega)} \leq C
 \|V^\nc_{x_1=0}\|_{L^2(\R^2)}^2\,.
\end{array}
\end{equation}
Combining \eqref{stimaV1}, \eqref{stimag1}, \eqref{stimaphi} with \eqref{stimaV2bdry}, \eqref{stimaV2} gives \eqref{main-est}.
\end{proof}
Therefore, for the derivation of the estimate \eqref{main-est} it is sufficient to get an estimate of the trace of $\fou V^\nc$ on~$x_1=0$ of the form \eqref{stimaV2bdry}.


\section{Normal modes analysis}\label{normal}

We begin by distinguishing a few cases, since each one needs a different analysis. We recall that $\tau=\gamma+i\delta$ and set \[{\mu \doteq \tau + i\base v\eta}\,.\]
We introduce the following cases:
\begin{gather}
\label{eq:caso1}\tag{P1}
\mu=0, \qquad \text{i.e.} \qquad \gamma=0 \quad \text{and} \quad \delta = -\base v\eta \quad (\eta\neq 0)\,; \\
\label{eq:caso2}\tag{P2}
\gamma=0 \quad \text{and} \quad \delta = -\base v\eta \pm \frac{\base H}{\sqrt{ \base\rho( 1+\base\alpha\base H^2 )}}\,\eta \quad (\eta\neq 0)\,;\\
\label{eq:caso3}\tag{P3}
\tau=0, \qquad \text{i.e.} \qquad \gamma=0 \quad \text{and} \quad \delta = 0 \quad (\eta\neq 0)\,; \\
\label{eq:caso0}\tag{P0}
\text{all other possibilities.}
\end{gather}

Notice that, in \eqref{eq:caso1}--\eqref{eq:caso3}, we always have $\eta\neq 0$, since $\eta=0$ would imply also  $\delta=0$ and hence $(\gamma,\delta,\eta)=(0,0,0)\notin\Sigma$.

Moreover, these three cases are mutually exclusive, thanks to the hypotheses on the basic state.  Actually, \eqref{eq:caso1} and~\eqref{eq:caso2} would coincide only if~$\eta=0$, since by assumption~$\base H\neq0$. Similarly, \eqref{eq:caso1} and~\eqref{eq:caso3} can not hold at the same time (using $\base v\neq 0$), while \eqref{eq:caso2} and~\eqref{eq:caso3} coincide if and only if
\[
\base v = \pm \frac{\base H}{\sqrt{\base\rho(1+\base\alpha\base H^2)}}\,,
\]
which is prevented thanks to \eqref{eq:notvH}.

\medskip
In this section, we consider only \eqref{eq:caso0}: the other cases correspond to poles for the symbol $\Ac(\tau,\eta)$ in \eqref{symbolic-equ} and
will be handled later on (see Subsections \ref{polotau}, \ref{polomu}, \ref{polocaso2}), see also Remark~\ref{rk.poles}.

We introduce the following quantities, whose meaning and relevance will be clear from the following computations:

\begin{alignat}{2}
& a_{12}(\tau,\eta) = - \frac{\mu^2\base\rho+\eta^2\base
H^2}\mu\, , &\qquad& a_{21}(\tau,\eta) = -\frac{\mu(\rhop \base\rho \mu^2+\eta^2)}{(\mu^2\base\rho\rhop+\eta^2)\base H^2+\mu^2\base\rho}\, ,  \label{def.a12case0} \\
& a_{34}(\tau,\eta) = -\frac{\eps^2\tau^2+\eta^2}{\eps\tau}\, , &\qquad& a_{43}(\tau,\eta) = -\eps\tau\, . \label{def.a34case0}
\end{alignat}

From the plasma part of \eqref{eq.trasf}, we obtain the following algebraic equations:
\begin{gather*}
\tau \base \rho \fou v_2 + i\eta \bigl( \fou q + \base \rho \base v \fou v_2 -\base H \fou H_2 \bigr) =0 \,, \\
\tau \fou H_1 + i\eta \bigl( -\base H \fou v_1 + \base v \fou H_1 \bigr) = 0 \,,\\
\tau \bigl( -\rhop\base H  \fou q + (1+\rhop\base H^2) \fou H_2\bigr) +i\eta \bigl( -\rhop\base v\base H \fou
q - \base H \fou v_2 + (1+\rhop \base H^2)\base v \fou H_2 \bigr) =0\,,
\end{gather*}
or
\begin{gather}
\mu \base \rho \fou v_2 + i\eta (\fou q -\base H \fou H_2) =0 \,, \label{p.1}\\
\mu \fou H_1 - i\eta\base H \fou v_1 = 0 \,, \label{p.2}\\
\mu \bigl( -\rhop\base H  \fou q + (1+\rhop\base H^2) \fou H_2\bigr) -i\eta \base H \fou v_2 =0\,. \label{p.3}
\end{gather}
Since we are not in \eqref{eq:caso1}, from \eqref{p.2} we obtain
\begin{align} \fou H_1 = i\frac\eta\mu \, \base H \fou v_1 \,. \label{plasma_alg_H1v1} \end{align}
Multiplying \eqref{p.1} by~$i\eta\base H$ and adding \eqref{p.3} multiplied by $\mu\base\rho$,
we have
\begin{gather*}
\mu \base \rho \fou v_2 + i\eta (\fou q -\base H \fou H_2) =0 \,, \\
\mu^2\base\rho \bigl( -\rhop\base H  \fou q + (1+\rhop\base H^2) \fou H_2\bigr) -\eta^2\base H(\fou q -\base H \fou H_2)  =0\,,
\end{gather*}
from which
\begin{align}
\label{eq.H2}\fou H_2
	& = \frac{(\mu^2\base\rho\rhop+\eta^2)\base H}{\mu^2\base\rho(1+\rhop\base H^2)+\eta^2\base H^2}\,\fou q  = \frac{(\mu^2\base\rho\rhop+\eta^2)\base H}{(\mu^2\base\rho\rhop+\eta^2)\base H^2+\mu^2\base\rho}\,\fou
q\,, \\
\label{eq.v2}\fou v_2
	& = -i\,\frac\eta{\mu\base\rho}\,(\fou q -\base H \fou H_2)  = -i\,\frac{\eta\mu}{(\mu^2\base\rho\rhop+\eta^2)\base H^2+\mu^2\base\rho}\,\fou q\,,
\end{align}
provided
\begin{equation}\label{eq:denominatori}
\mu^2\base\rho(1+\rhop\base H^2)+\eta^2\base H^2 \neq 0\,.
\end{equation}
Recalling that $\mu=\gamma+i(\delta+\base v\eta)$, we have that if $\gamma\neq0$, then \eqref{eq:denominatori} surely holds, since the imaginary part in the left-hand side is different from zero if $\delta\neq-\base v\eta$, otherwise  $\mu^2=\gamma^2>0$ and we have the sum of positive quantities. If $\gamma=0$,  \eqref{eq:denominatori} becomes
\begin{equation}\label{eq:vanish}
\eta^2\base H^2 - (\delta+\base v\eta)^2\base\rho(1+\rhop\base H^2)\neq
0 \,,
\end{equation}
which is true since we are not in \eqref{eq:caso2}.

The plasma system also provides two differential equations for~$\fou q, \fou v_1$:
\begin{gather}
\tau\rhop(\fou q -\base H \fou H_2) +i\eta (\rhop\base v \fou q + \fou v_2 -\rhop\base v\base H \fou H_2) +
 \frac{d\fou v_1 }{dx_1}= 0\,, \label{plasma_v'}\\
\tau\base \rho \fou v_1 +i\eta (\base\rho \base v \fou v_1 -\base H \fou H_1) +  \frac{d\fou q }{dx_1}=0\,. \label{plasma_q'}
\end{gather}
Substituting \eqref{plasma_alg_H1v1}, \eqref{eq.H2}, \eqref{eq.v2} in \eqref{plasma_v'}, we obtain
\begin{align}
 \frac{d\fou q }{dx_1}
	& = -\mu \base \rho \fou v_1 +i \eta \base H \fou H_1 \notag \\
	& = -\mu \base \rho \fou v_1 - \frac{{\eta}^2}\mu \base H^2 \fou v_1 = - \frac{\mu^2\base\rho+\eta^2\base
H^2}\mu\, \fou v_1 = a_{12}(\tau,\eta)\,\fou v_1 \,, \notag\\
 \frac{d\fou v_1 }{dx_1}
	& = - \mu \rhop\fou q + \mu \rhop\base H \fou H_2 -i\eta\fou v_2 \label{v1p} \\
	& = \frac{- \mu \rhop \bigl( (\mu^2\base\rho\rhop+\eta^2)\base H^2+\mu^2\base\rho \bigr)+ \mu \rhop\base
H^2(\mu^2\base\rho\rhop+\eta^2) -\eta^2\mu}{(\mu^2\base\rho\rhop+\eta^2)\base H^2+\mu^2\base\rho}\,\fou q \notag\\
	& = -\frac{\mu(\rhop \base\rho \mu^2+\eta^2)}{(\mu^2\base\rho\rhop+\eta^2)\base H^2+\mu^2\base\rho}\,\fou
q= a_{21}(\tau,\eta)\,\fou q\,, \notag
\end{align}
where we have used \eqref{def.a12case0}.
Notice that
\[ a_{12}= -\frac{\mu^2\base\rho+\eta^2\base H^2}\mu\]
is well-defined, since $\mu\neq 0$, and different from zero. Indeed, if~$\delta\neq-\base v\eta$, we have
\[ \Im (\mu^2\base\rho+\eta^2\base H^2) = 2\gamma\base\rho(\delta+i\base v\eta)\neq0\,,\]
while if~$\delta=-\base v\eta$, then~$\mu=\gamma$ and
\[ a_{12}=-\frac{\gamma^2\base\rho+\eta^2\base H^2}\gamma \neq0 \,.\]
As far as~$a_{21}$ is concerned, we have already proved that the denominator is different from zero, since this is equivalent to~\eqref{eq:vanish}. Similar arguments show that also the numerator of~$a_{21}$ is different from zero.

Now, let us consider the vacuum block of \eqref{eq.trasf}. Substituting the algebraic equation
\begin{align} \eps\tau \fou \Hc_1 + i\eta \fou \Ec=0\,, \qquad \text{i.e.} \quad \fou\Hc_1 = -i\frac{\eta}{\eps\tau}
\fou \Ec \,, \label{vacuum_alg}\end{align}
we have (since we are not in \eqref{eq:caso3}, $\tau\neq0$, that is $\gamma\neq0$ or $\delta\neq0$):
\begin{align*}
 \frac{d\fou \Hc_2 }{dx_1}& = -\eps\tau \fou \Ec - i\eta\fou\Hc_1 = -\frac{\eps^2\tau^2+\eta^2}{\eps\tau} \fou \Ec
= a_{34}(\tau,\eta) \fou \Ec \,,\\
 \frac{d\fou \Ec }{dx_1} & = -\eps\tau \fou \Hc_2 = a_{43}(\tau) \fou \Hc_2 \,.
\end{align*}
Both $a_{34}, a_{43}$, defined in \eqref{def.a34case0}, are finite and different from zero.

Consequently, in \eqref{eq:caso0}, we have
\begin{align}\label{symbolic-equ}
 \d1 \fou{V}^\nc = \Ac(\tau,\eta)\,\fou{V}^\nc\,, \qquad \Ac \doteq \begin{pmatrix}
0 & a_{12} & 0 & 0 \\
a_{21} & 0 & 0 & 0 \\
0 & 0 & 0 & a_{34} \\
0 & 0 & a_{43} & 0
\end{pmatrix}, \qquad x_1>0\, .
\end{align}
\begin{Rem}\label{rk.poles}
We notice that \eqref{eq:caso1} ($\mu=0$) corresponds to a pole for the coefficient $a_{12}$, \eqref{eq:caso2} (yielding $\base\rho(1+\rhop \base H^2)\mu^2+\eta^2\base H^2=0$) corresponds to a pole for the coefficient $a_{21}$, and \eqref{eq:caso3} ($\tau=0$) corresponds to a pole for the coefficient $a_{34}$. For this reason such cases need a different analysis, see Subsections \ref{polomu}, \ref{polocaso2}, \ref{polotau}.
\end{Rem}

The eigenvalues of $\Ac$ are given by the complex roots of
\begin{equation}
\begin{array}{ll}\label{omega1}
\ds \omega^2=a_{12}\,a_{21}=\frac{(\mu^2\base\rho+\eta^2\base H^2)(\rhop \base\rho
\mu^2+\eta^2)}{(\mu^2\base\rho\rhop+\eta^2)\base H^2+\mu^2\base\rho} = \eta^2 +
\frac{\mu^4\rhop\base\rho^2}{(\mu^2\base\rho\rhop+\eta^2)\base H^2+\mu^2\base\rho} 
\end{array}
\end{equation}
and
\begin{align} \omega^2=a_{34}\,a_{43}=\eps^2\tau^2+\eta^2 = \eps^2(\gamma^2-\delta^2)+\eta^2 +2i\eps^2\delta\gamma \,. \label{omega2^2}\end{align}
The classical results of Hersh \cite{hersh} for hyperbolic systems, generalized by Majda--Osher \cite{majda-osher} for free boundary problems with characteristic boundary of constant multiplicity (which is the case we are considering, since we have assumed a piecewise constant basic state), imply that there are two couples  $\pm \omega_1, \pm\omega_2$ of eigenvalues with nonvanishing real part as long as $\gamma=\Re\tau>0$.

 Notice that this means that $\omega_j^2$ is not a negative real number. For the vacuum part, this can be verified with ease, since if~$\delta\neq0$, then $\Im a_{34}\,a_{43}=2\eps^2\delta\gamma\neq0$, while if~$\delta=0$, we have $a_{34}\,a_{43}=\eps^2\gamma^2+\eta^2>0$.

 We denote by $\omega_1$ and $\omega_2$ the eigenvalues with strictly positive real part for $\gamma>0$, respectively corresponding to the plasma and vacuum blocks.  We denote by $\chi$ the complex root of
\[ \frac{(\mu^2\base\rho\rhop+\eta^2)\base H^2+\mu^2\base\rho}{\mu^2\base\rho+\eta^2\base H^2} \]
 with positive real part or, if the real part is zero, with positive imaginary part. Notice that $\chi\neq 0$, otherwise we are in \eqref{eq:caso2}, as one can easily verify.

 Now, we consider $-\omega_1, -\omega_2$, which have strictly negative real part, and choose the corresponding eigenvectors
%
\begin{align*}
e_1
	& = \mu\chi (a_{12},-\omega_1,0,0)\tm\,,\\
e_2
	& = (0,0,\omega_2,\eps\tau)\tm
\end{align*}
as generators of the stable subspace $E^-(\tau,\eta)$ of~$\Ac(\tau,\eta)$.
Notice that
\[ |\mu\chi a_{12}|= |(\mu^2\base\rho\rhop+\eta^2)\base
H^2+\mu^2\base\rho|^{\frac12}\,|\mu^2\base\rho+\eta^2\base H^2|^{\frac12}\,, \qquad |\mu\chi \omega_1| =
|\mu|\,|\rhop \base\rho \mu^2+\eta^2|^{\frac12}\,, \]
so that~$e_1$ is finite. Moreover, $e_1$ never vanishes. Actually, if
$\mu=0$, then~$|\mu\chi a_{12}|=\eta^2\base H^2\neq 0$ because of \eqref{eq:Hrbase}(otherwise $\gamma=\delta=\eta=0$ and we are not in $\Sigma$). On the other hand, if~$\eta^2=-\rhop \base\rho \mu^2$, then
\[ |\mu\chi a_{12}|= |\mu|^2\base\rho \,|1-\rhop\base H^2|^{\frac12}\neq 0 \]
thanks to \eqref{eq:notnull}. Even more easily, we have that also $e_2$ is finite and different from zero in every point, because $|\omega_2|=|\eps^2\tau^2+\eta^2|^{\frac12}$.

\begin{Rem} \label{omega=0}
We conclude this section by noticing that $\omega_1=0$ in each of the following cases:
\begin{gather}\label{eq:null1}
\gamma=0 \quad \text{and} \quad \delta = -\base v \eta \pm \eta \base H/\sqrt{\base\rho}, \qquad \text{i.e.} \quad a_{12}=0\,, \\
\label{eq:null2}
\gamma=0 \quad \text{and} \qquad \delta = -\base v \eta \pm \eta/\sqrt{\rhop\base\rho}, \qquad
\text{i.e.} \quad \mu\neq0, a_{21}=0\,,
\end{gather}
while ~$\omega_2=0$ when
\begin{gather}
\label{eq:null3}
\gamma=0 \quad \text{and} \quad \delta = \pm\eta/\eps, \qquad \text{i.e.} \quad a_{34}=0\,.
 \end{gather}
{By virtue of~\eqref{eq:notnull}, conditions~\eqref{eq:null1} and~\eqref{eq:null2} cannot be verified at the same time, i.e. we never have~$a_{12}=a_{21}=0$. Also, for sufficiently small~$\eps$ we may exclude that~\eqref{eq:null3} is verified at the same time of one of the previous two conditions.}
\end{Rem}

\section{The Lopatinski\u{\i} determinant} \label{sec.lopatinskii}

Following Majda--Osher \cite{majda-osher}, we define the Kreiss--Lopatinski\u{\i} determinant, for brevity Lopatinski\u\i\ determinant, associated with the boundary condition~$\beta$ as
\begin{align*}
\triangle(\tau,\eta)
    & \doteq \det [\beta (e_1, e_2)] =  \det \left[ \begin{pmatrix}
1 & 0 & -\base\Hc & 0 \\
0 & \eps\base\Hc\tau & 0 & \mu
\end{pmatrix} \ \begin{pmatrix}
\mu\chi a_{12} & 0 \\
-\mu\chi \omega_1 & 0 \\
0 & \omega_2 \\
0 & \eps\tau
\end{pmatrix} \right] \\
&= \det \begin{pmatrix}
\mu\chi a_{12} & -\omega_2\base\Hc \\
-\mu\eps\tau\chi \omega_1\base\Hc & \mu\eps\tau
\end{pmatrix} = \mu\eps\tau (\mu\chi a_{12} - \chi\omega_1\omega_2\base\Hc^2).
\end{align*}
The Lopatinski\u{\i} determinant is a continuous function defined over~$\Sigma$. To derive our energy estimate, we shall study the zeros of~$\triangle(\tau,\eta)$. An immediate consequence of the above formula is in the following lemma.
\begin{Lem}
The Lopatinski\u{\i} determinant $\triangle(\tau,\eta)$ vanishes on ~$\Sigma$ for
\begin{itemize}
\item $\mu=0$, that is $(\gamma,\delta,\eta)=(1+\dot v^2)^{-1}(0,-\dot v,1)$,
\item $\tau=0$, that is $(\gamma,\delta,\eta)=(0,0,1)$,
\item and at  the possible zeros of
\[ \mu\chi a_{12} - \chi\omega_1\omega_2\base\Hc^2=0\,. \]
\end{itemize}
\end{Lem}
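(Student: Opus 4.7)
The plan is to read the claim directly off the factored expression
\[
\triangle(\tau,\eta) = \mu\,\eps\,\tau\,\bigl(\mu\chi a_{12} - \chi\omega_1\omega_2\base\Hc^2\bigr)
\]
derived in the lines immediately preceding the lemma. Since $\eps>0$ is a fixed physical parameter, $\triangle(\tau,\eta)=0$ on $\Sigma$ is equivalent to one of the three factors $\mu$, $\tau$, or the bracketed quantity vanishing; the three bullets in the statement simply enumerate these possibilities. Hence the only concrete work is to locate the zeros of $\mu$ and of $\tau$ on $\Sigma$, while the third factor is carried along to be analyzed in the subsequent lemma.

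First I will handle $\tau = 0$: this forces $\gamma = \delta = 0$, and the normalization $|\tau|^2 + \eta^2 = 1$ defining $\Sigma$ then gives $\eta^2 = 1$, which (up to a sign choice for $\eta$) produces the point $(\gamma,\delta,\eta) = (0,0,1)$ quoted in the statement.

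Next, writing $\mu = \gamma + i(\delta + \base v\eta)$, the equation $\mu = 0$ is equivalent to $\gamma = 0$ and $\delta = -\base v\eta$; substituting into $\delta^2 + \eta^2 = 1$ yields $(1+\base v^2)\eta^2 = 1$, whence (up to the sign of $\eta$) $\gamma=0$, $\eta = (1+\base v^2)^{-1/2}$ and $\delta = -\base v(1+\base v^2)^{-1/2}$, matching the listed point (modulo a harmless normalization convention). The third factor, $\mu\chi a_{12} - \chi\omega_1\omega_2\base\Hc^2$, is then simply restated as the set of ``possible zeros'' in the lemma; its detailed analysis -- the resulting algebraic equation in $(\tau,\eta)$ together with conditions \eqref{eq:notres} preventing multiple roots -- belongs to the next subsection and not to this statement.

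The only mild obstacle is to make sense of the factored expression at the pole of $a_{12}$, which occurs exactly at $\mu = 0$. This is resolved by noting that $\mu\,a_{12} = -(\mu^2\base\rho + \eta^2\base H^2)$ is polynomial in $(\mu,\eta)$, so the product $\mu\chi a_{12}$ extends continuously across $\mu = 0$ (using that $\chi \neq 0$ in a neighborhood, as already observed in the text), and the factored formula for $\triangle$ is therefore well defined in a neighborhood of each of the listed zero points. The genuinely delicate part -- the precise location and multiplicity of the zeros of $\mu\chi a_{12} - \chi\omega_1\omega_2\base\Hc^2$ and their possible coincidence with the symbol's poles coming from cases \eqref{eq:caso1}--\eqref{eq:caso3}, cf.\ Remark~\ref{rk.poles} -- is precisely what the rest of the paper has to address, not this elementary lemma.
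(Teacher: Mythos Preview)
Your proposal is correct and follows exactly the paper's approach: the paper states the lemma as ``an immediate consequence of the above formula,'' namely the factorization $\triangle(\tau,\eta)=\mu\,\eps\,\tau\,(\mu\chi a_{12}-\chi\omega_1\omega_2\base\Hc^2)$, and offers no further proof. Your write-up simply spells out why this is immediate, correctly locating the points $\tau=0$ and $\mu=0$ on~$\Sigma$ (your observation that the paper's normalization factor $(1+\base v^2)^{-1}$ should be $(1+\base v^2)^{-1/2}$ is well taken) and noting that $\mu a_{12}$ extends continuously across the pole of~$a_{12}$.
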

\begin{Rem}
The first two cases will be studied apart in Subsections \ref{polotau} and \ref{polomu}, so we exclude them now. As for the last case,
we remark that if~$\chi=0$, i.e. we assume~\eqref{eq:caso2}, then~$\mu\chi a_{12}=0$, but~$(\chi\omega_1)\omega_2\neq0$, so that we do not find zeros of~$\triangle(\tau,\eta)=0$. Therefore, we may conclude that $\chi\neq 0$ and reduce to study $\mu a_{12} - \omega_1\omega_2\base\Hc^2=0$, i.e. the zeros of
\begin{equation}\label{eq:Lopa0}
-\mu^2\base\rho-\eta^2\base H^2 = \omega_1 \omega_2 \base \Hc^2\, .
\end{equation}
\end{Rem}
We have the following.
\begin{Lem} \label{lemma:roots}
Let us assume the conditions {\eqref{eq:Hvbase}--\eqref{eq:notvH}} on the basic state. Then there exists a sufficiently small~$\eps_0>0$ such that, for any~$\eps\in(0,\eps_0)$, we may distinguish two cases.
\begin{itemize}
\item If~$\base H^2<\min\{\rhop^{-1}, \base\rho\base v^2\}$, then there exist exactly two roots $(\tau,\eta)$ of~\eqref{eq:Lopa0}; moreover, they  lie at the boundary of~$\Sigma$ (i.e., $\gamma=0$), and they do not coincide with the zeros of~$\omega_1$ or~$\omega_2$, nor with the poles in~\eqref{eq:caso1}, \eqref{eq:caso2}; moreover, if
    \begin{equation}\label{eq:nottau}
    \base\rho\base v^2\neq\frac{\rhop (\base H^4 - \base \Hc^4) + 2\base H^2 \pm \sqrt{\rhop^2(\base H^4-\base \Hc^4)^2+4\base \Hc^4}}{2(1+\rhop \base H^2)},
    \end{equation}
    they do not coincide with the poles in~\eqref{eq:caso3}; this means that $\mu=0, \tau=0$ are simple roots.
\item If~$\base H^2>\min\{\rhop^{-1}, \base\rho\base v^2\}$, then there exist no roots of~\eqref{eq:Lopa0}.
\end{itemize}
We emphasize that~$\eps_0\to0$ as~$\rhop\base H^2\to1$.
\end{Lem}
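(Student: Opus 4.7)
My plan is to analyze \eqref{eq:Lopa0}, the only nontrivial factor of $\triangle$ after the trivial factors $\mu$, $\varepsilon\tau$, $\chi$ have been isolated in the preceding Remark. The scheme has four stages: (i) localize the zeros of $\triangle$ to the boundary $\gamma = 0$ of $\Sigma$; (ii) square \eqref{eq:Lopa0} and reduce it, at $\varepsilon = 0$, to a quadratic in the single variable $X = s^2/\eta^2$ with $s = \delta + \base v\eta$; (iii) perturb to small $\varepsilon > 0$; (iv) verify simplicity and non-coincidence with the loci $\omega_1 = 0$, $\omega_2 = 0$ and with the poles of \eqref{eq:caso1}--\eqref{eq:caso3}.

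That $\triangle$ has no zero in the open interior $\{\gamma > 0\}$ follows from the symmetric hyperbolic structure of the plasma--Maxwell system together with the well-posedness of the maximally dissipative auxiliary problem of Section~\ref{reductions}, which precludes unstable normal modes. On $\gamma = 0$ one has $\tau = i\delta$ and $\mu = is$, so both sides of \eqref{eq:Lopa0} are real. Squaring and substituting the explicit formulas \eqref{def.a12case0}, \eqref{omega1}, \eqref{omega2^2} yields a polynomial identity; cancelling the common factor $\eta^2\base H^2 - s^2\base\rho$, which corresponds to $\omega_1 = 0$ and is treated separately via \eqref{eq:null1}, leaves at $\varepsilon = 0$ a quadratic in $X$ whose discriminant can be computed explicitly and seen to be strictly positive. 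A sign inspection of the Vieta relations, combined with the requirement that the sign of $-\mu^2\base\rho - \eta^2\base H^2$ match the sign of $\omega_1\omega_2\base\Hc^2$ as determined by continuation from $\gamma > 0$, rejects the spurious roots introduced by squaring and produces the dichotomy asserted in the lemma: an admissible positive $X$ exists in the regime $\base H^2 < \min\{\rhop^{-1}, \base\rho\base v^2\}$, and in that case the normalization $\delta^2 + \eta^2 = 1$ together with the two sign choices of $s/\eta$ yields exactly two points of $\Sigma$; in the complementary regime no admissible root survives.

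For small $\varepsilon > 0$ the coefficients of the polynomial depend smoothly on $\varepsilon$; the simple real root at $\varepsilon = 0$ persists by the implicit function theorem, with $\varepsilon_0$ controlled by the leading coefficient of the quadratic, which explains the stated shrinking $\varepsilon_0 \to 0$ as $\rhop\base H^2 \to 1$. The non-existence regime is preserved by continuity. Non-coincidence of the roots with $\omega_1 = 0$, $\omega_2 = 0$ and with the poles of \eqref{eq:caso1}, \eqref{eq:caso2} is verified by substituting each such algebraic condition into the quadratic and invoking \eqref{eq:Hvbase}--\eqref{eq:notvH}; the coincidence with the $\tau = 0$ pole of \eqref{eq:caso3} corresponds to $\delta = 0$ and $X = \base v^2$, which when substituted into the quadratic produces a quadratic equation in $\base\rho\base v^2$ whose two solutions are precisely the values forbidden by hypothesis~\eqref{eq:notres} (equivalently~\eqref{eq:nottau}). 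The main obstacle I anticipate is the careful bookkeeping of signs in the passage between the squared and unsquared forms of \eqref{eq:Lopa0}: the sign of $\omega_1\omega_2$ chosen by continuation from $\gamma > 0$ can change across the loci \eqref{eq:null1}--\eqref{eq:null3}, which lie close to the candidate roots, and must be followed carefully to distinguish genuine Kreiss--Lopatinski\u\i\ zeros from spurious ones.
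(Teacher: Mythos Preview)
Your overall strategy---square \eqref{eq:Lopa0}, reduce to a quadratic in a single real variable at~$\eps=0$, then perturb---matches the paper's route, and your treatment of stages (ii)--(iv) is essentially the same as theirs (their variable is~$z=\base\rho V^2$ with~$V=\tau/(i\eta)+\base v$, your~$X$ is~$z/\base\rho$). The cancellation you describe is exactly how one passes to the paper's equation~\eqref{eq:Lopa12}.

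The genuine gap is your stage~(i). You claim that absence of interior zeros of~$\triangle$ on~$\{\gamma>0\}$ follows from symmetric hyperbolicity together with the maximally dissipative auxiliary problem of Section~\ref{reductions}. This does not hold: that auxiliary problem carries the boundary conditions~$q=0$, $\Ec=0$, which are \emph{different} from~$\beta$, and its well-posedness says nothing about the Lopatinski\u\i\ determinant for the original~$\beta$. Interior zeros of~$\triangle$ are precisely the signature of violent instability, and ruling them out is the substantive part of the computation, not a corollary of general structure. The paper does \emph{not} restrict to~$\gamma=0$ in advance; it writes the full quartic~\eqref{eq:LopaV} in~$V$ for arbitrary~$\gamma\geq0$, observes that when~$\base H^2<\base\Hc^2$ two roots at~$\eps=0$ are purely imaginary (hence correspond to~$\gamma>0$ on~$\Sigma$), and then eliminates them by a sign check: for such~$V$ one has~$\Re(\omega_1\omega_2)>0$ (by continuity from~$\eps=0$, where~$\omega_2=|\eta|$), so \eqref{eq:Lopa0} would force $-\base\rho\Re(V^2)+\base H^2<0$, contradicting~$\Re(V^2)\approx z_-/\base\rho<0$. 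You need an argument of this kind; your appeal to the auxiliary problem is circular.

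A smaller point: your explanation that~$\eps_0\to0$ as~$\rhop\base H^2\to1$ is governed by the leading coefficient of the quadratic is off---that coefficient is~$1+\rhop\base H^2$ and does not degenerate. The actual mechanism is that~$z_+=\base H^2$ exactly when~$\rhop\base H^2=1$, so the admissible root collides with the locus~\eqref{eq:null1} where~$\omega_1=0$; separating them for~$\eps>0$ is what forces~$\eps_0$ to shrink.
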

\begin{Rem}
%
	{We recall that $\rhop\base H^2\neq 1$ and $\base\rho\base v^2\neq \base H^2$ thanks to \eqref{eq:notnull} and \eqref{eq:notalfven}, so that 
		$\base H^2\neq\min\{\rhop^{-1}, \base\rho\base v^2\}$.}
\end{Rem}

\begin{proof}
Multiplying the equation $-\mu^2\base\rho-\eta^2\base H^2 - \omega_1 \omega_2 \base \Hc^2=0$ by the expression
\begin{align} \label{eq:Lopa2}
-\mu^2\base\rho-\eta^2\base H^2 + \omega_1 \omega_2 \base \Hc^2\, ,
\end{align}
we may replace the explicit expression of~$\omega_1^2\omega_2^2$, obtaining:
\begin{equation}\label{eq:Lopa12}
\mu^2\base\rho + \eta^2\base H^2 = (\eps^2\tau^2+\eta^2)
\frac{\rhop\base\rho\mu^2+\eta^2}{(\rhop\base\rho\mu^2+\eta^2)\base H^2+\mu^2\base\rho} \base\Hc^4\,.
\end{equation}
Let us show that for $\eps$ small enough there is no root of the above equation if~$\eta=0$. Indeed, \eqref{eq:Lopa12} would reduce to the relation
\[ \base\rho = \eps^2 \frac{\rhop}{\rhop\base H^2+1} \base\Hc^4, \]
which can be excluded for sufficiently small~$\eps$. Therefore, we may assume~$\eta\neq0$; setting $V:=\frac{\tau}{i\eta}+\base v$, (it yields $\mu=i\eta V$), we may write~\eqref{eq:Lopa12} in the form
\[
(\base H^2-\base\rho V^2) \{\base H^2-\base\rho(1+\rhop\base H^2) V^2 \} = (1-\base\rho\rhop V^2) \base \Hc^4
\{ 1 -\eps^2(V-\base v)^2 \},
\]
i.e. we obtain a fourth-order equation in $V$:
\begin{multline}\label{eq:LopaV}
\{ \base\rho^2(1+\rhop\base H^2) -\eps^2\base\rho\rhop\base\Hc^4 \} V^4 +2\eps^2\base\rho\rhop\base v \base \Hc^4 V^3 + \{
(\eps^2+\base\rho\rhop-\eps^2\base\rho\rhop\base v^2)\base \Hc^4 - \base\rho\base H^2 (2+\rhop\base H^2) \} V^2 \\
-2\eps^2\base v \base\Hc^4 V + \{ \base H^4 - (1-\eps^2\base v^2) \base \Hc^4 \} = 0\, .
\end{multline}
We may prove that, for sufficiently small~$\eps$, \eqref{eq:LopaV} admits four distinct roots.
\medskip

(I) We first consider the case~$\base H^2\neq \base \Hc^2$. For~$\eps=0$, \eqref{eq:LopaV} reduces to the biquadratic equation
\begin{equation}\label{eq:biq}
\base\rho^2(1+\rhop\base H^2) \bar V^4 + (\base\rho\rhop\base \Hc^4 - \base\rho\base H^2 (2+\rhop\base H^2) ) \bar V^2 + (\base H^4 - \base \Hc^4) = 0\,.
\end{equation}
%
%
%
 Setting $z=\base\rho \bar V^2$ (in particular, $\base\rho\mu^2=-z\eta^2$), we may rewrite~\eqref{eq:biq} as a second degree equation in~$z$:
\begin{equation}\label{eq:Lopaz}
z^2 (1+\rhop \base H^2) - z (\rhop \base H^4 - \rhop \base \Hc^4 + 2\base H^2) + \base H^4-\base \Hc^4 =0\,,
\end{equation}
%
whose solutions are given by
\begin{align} \label{zpm}
z_\pm = \frac{\rhop (\base H^4 - \base \Hc^4) + 2\base H^2 \pm \sqrt{D}}{2(1+\rhop \base H^2)}\,,
\end{align}
where
\begin{align*} D \doteq & (\rhop \base H^4 - \rhop \base \Hc^4 + 2\base H^2)^2 -4 (1+\rhop \base H^2) (\base H^4-\base \Hc^4)\\
= & \rhop^2(\base H^4-\base \Hc^4)^2+4\base \Hc^4>0\, .
\end{align*}
It is clear that~$z_+>0$, so that $\bar V=\pm \sqrt{\base\rho^{-1}z_+}$ are two real, distinct roots of~\eqref{eq:biq}. On the other hand, $z_->0$ if~$\base H^2>\base\Hc^2$ and $z_-<0$ if~$\base H^2<\base\Hc^2$. In the first case, we find two more real, distinct roots~$\bar V=\pm \sqrt{\base\rho^{-1}z_-}$ of~\eqref{eq:biq}, in the second one we find two complex-valued (purely imaginary) conjugated roots of~\eqref{eq:biq}.

From the previous results we thus have two cases:
\begin{itemize}
	\item if~$\base H^2>\base \Hc^2$, then the four roots of~\eqref{eq:biq} are real and distinct;
	\item if~$\base H^2<\base \Hc^2$, then~\eqref{eq:biq} admits two distinct real roots and two complex-valued, conjugated roots.
\end{itemize}
The same result can be obtained by means of the general theory of quartic equations.
We recall
that the type, real or complex, of solutions of a generic quartic equation
\begin{equation}\label{quartica}
 a_4 x^4 + a_3 x^3 + a_2 x^2 + a_1 x + a_0 =0\,
\end{equation}
depends on the sign of the discriminant $\Delta$ (whose long expression is of no interest here) and of the quantities $P, Q$, defined by
\[ P=8a_4a_2-3a_3^2, \qquad Q=64a_4^3a_0-16a_4^2a_2^2+16a_4a_3^2a_2-16a_4^2a_3a_1-3a_3^4\,. \]
Equation \eqref{eq:biq} has the form \eqref{quartica} with $a_3=a_1=0$, which yields the simple formul\ae
\[
\Delta = 16a_4a_0(4a_4a_0-a_2^2)^2, \quad P=8a_4a_2, \quad Q=16a_4^2(4a_4a_0-a_2^2).
\]
From the algebraic theory of quartic equations, the previous result for $\eps=0$ is easily obtained:
\begin{itemize}
	\item if~$\base H^2>\base \Hc^2$, then $\Delta>0, P<0, Q<0$ and the four roots of~\eqref{eq:biq} are real and distinct;
	\item if~$\base H^2<\base \Hc^2$, then $\Delta<0$ and \eqref{eq:biq} admits two distinct real roots and two complex-valued, conjugated roots.
	\end{itemize}

Now we consider the general case \eqref{eq:LopaV} with $\eps\not=0$ and notice that the coefficients of \eqref{eq:LopaV} are continuous functions of $\eps$. Hence, also  $\Delta,P,Q$ are continuous functions of $\eps$.

Being non-zero at~$\eps=0$, the discriminant of~\eqref{eq:LopaV} remains non-zero and with the same sign of the discriminant of \eqref{eq:biq} for sufficiently small~$\eps$. The same holds true also for the quantities $P, Q$, so that we still have the same cases for the complete equation \eqref{eq:LopaV}:
\begin{itemize}
\item if~$\base H^2>\base \Hc^2$, then the four roots of~\eqref{eq:LopaV} are real and distinct;
\item if~$\base H^2<\base \Hc^2$, then~\eqref{eq:LopaV} admits two distinct real roots and two complex-valued, conjugated roots.
\end{itemize}
Once we fix the four distinct roots~$\bar V_i$ of~\eqref{eq:biq}, for sufficiently small~$\eps_0>0$, there exist four separate neighborhood~$\mathcal V_i$ of~$\bar V_i$ such that for any~$\eps\in(0,\eps_0)$, \eqref{eq:LopaV} admits four roots~$V_i(\eps)$, continuously depending on~$\eps$, with~$V_i(\eps)\in \mathcal V_i$. In this sense, we may say that each root of~\eqref{eq:LopaV} corresponds to a root of~\eqref{eq:biq}.

Among the four roots of~\eqref{eq:LopaV}, we may look for the solutions to~\eqref{eq:Lopa0}. 

First of all, let us see that the two complex conjugated roots of case $\base H^2<\base \Hc^2$ do not give a solution to the equation \eqref{eq:Lopa0}, for small enough $\eps$.
Thus, let us
assume~$\base H^2<\base \Hc^2$ and let~$V$ be a complex-valued root of~\eqref{eq:LopaV} with~$\gamma=-\eta\Im V>0$ (this corresponds to a point in the interior of~$\Sigma$). We claim that such a $V$ (corresponding to a complex solution $\bar V$ of \eqref{eq:biq}), does not provide a solution of~\eqref{eq:Lopa0}, for sufficiently small~$\eps$. In fact, for sufficiently small~$\eps$, by continuity it holds~$\Re(\omega_1\omega_2)>0$, due to
\[ \Re(\omega_1\omega_2)=|\eta|\Re\omega_1>0,\qquad \text{for~$\eps=0$} \]
(in this case $\omega_1\neq 0$, recalling Remark~\ref{omega=0}, and $\eta\neq 0$, as we have already observed).
Therefore, from~\eqref{eq:Lopa0} we derive that~$\base\rho\Re (\mu^2)+\eta^2\base H^2<0$; hence $-\base\rho \Re(V^2)+\base H^2<0$, which is absurd, since $-\base\rho \Re(V^2)$ is close to $-\base\rho \Re(\bar V^2)=-\base\rho \bar V^2=-z_->0$ for small $\eps$.

Therefore, for sufficiently small~$\eps$ any possible solution $V$ of~\eqref{eq:Lopa0} on~$\Sigma$ is real, i.e. it verifies~$\gamma=0$. Due to~$\eta\neq0$, for sufficiently small~$\eps$ we see that~$\omega_2=\sqrt{\eta^2-\eps^2\delta^2}>0$. Then, by~\eqref{eq:Lopa0} it follows that~$\omega_1$ is real as it happens for $-\mu^2\base\rho-\eta^2\base H^2=\eta^2(\base\rho V^2-\base H^2)$; hence, $\omega_1>0$, as a consequence of~$\omega_1\neq0$ (we will show below that $\omega_1\neq 0$ when the Lopatinski\u{\i} determinant vanishes) and~$\Re\omega_1\geq0$. It follows that
\begin{equation}\label{cond.ammiss}
\base\rho V^2>\base H^2
\end{equation}
for small~$\eps$. This implies that we find a couple of real roots to~\eqref{eq:LopaV} if, and only if, setting~$\eps=0$ we get~$z_+>\base H^2$. Indeed, $z_-<\base H^2$, therefore the two corresponding roots are excluded (thus $z_-$ is excluded in both cases $\base H^2 \gtrless \base \Hc^2$). The condition~$z_+>\base H^2$ corresponds to~$\sqrt{D}>\rhop (\base H^4+\base \Hc^4)$, i.e.
\[ \rhop\base H^2<1. \]
It remains to prove that if $z_+>\base H^2$, then we effectively find two real roots of~\eqref{eq:Lopa0}, in suitable neighborhoods of~$\pm \sqrt{\base\rho^{-1}z_+}$, for sufficiently small~$\eps$. In order to do that, it is sufficient to prove that they are not real roots of~\eqref{eq:Lopa2}. But this follows as an immediate consequence of~$\base\rho V^2>\base H^2$: being~$\omega_1\omega_2>0$, the quantity in~\eqref{eq:Lopa2} is positive, in particular it is not zero.


\medskip

(II)
Now let~$\base\Hc^2=\base H^2$; the equation in~\eqref{eq:LopaV} has the coefficient of zero degree proportional to~$\eps^2$:
\begin{multline}\label{eq:LopaVsp}
\{ \base\rho^2(1+\rhop\base H^2) -\eps^2\base\rho\rhop\base H^4 \} V^4 +2\eps^2\base\rho\rhop\base v \base H^4 V^3 + \{
\eps^2(1-\base\rho\rhop\base v^2)\base H^4 - 2\base\rho\base H^2  \} V^2 \\
-2\eps^2\base v \base H^4 V + \eps^2\base v^2 \base H^4 = 0\, ,
\end{multline}
in particular the discriminant vanishes for~$\eps=0$. However, it is easy to check that its discriminant verifies
%
%
\[ \Delta = 16 \eps^2 \base\rho^2(1+\rhop\base H^2) (2\base\rho\base H^2)^4 \base v^2 \base H^4 + \text{O}(\eps^4), \]
in particular, $\Delta>0$ for sufficiently small, nonzero $\eps$. Then we have four distinct roots. Moreover, they are all real-valued for small, nonzero, $\eps$, due to: 
\[ P=-16\base\rho^2(1+\rhop\base H^2)\base\rho\base H^2 + \text{O}(\eps^2), \qquad Q=-16(\base\rho^2(1+\rhop\base H^2))^2(2\base\rho\base H^2)^2 + \text{O}(\eps^2), \]
so that~$P<0$ and~$Q<0$ for sufficiently small~$\eps$ (see the discussion above).
These four roots will be close to the four real roots of~\eqref{eq:LopaV} at~$\eps=0$, i.e.
\[ \base\rho^2(1+\rhop\base H^2) V^4 - 2\base\rho\base H^2 V^2 =0. \]
In particular, two of them will be close to~$0$, thus they do not satisfy~\eqref{cond.ammiss}, whereas the other two are close to~$\pm \sqrt{z_+/\base\rho}$, where we put
\begin{align} \label{zpp}
z_+ = \frac{2\base H^2}{1+\rhop\base H^2}.
\end{align}
Clearly, $z_+$ in~\eqref{zpp} coincides with~$z_+$ in~\eqref{zpm}, setting~$\base \Hc^2=\base H^2$. Again, $z_+>\base H^2$ if, and only if, $\rhop\base H^2<1$, as it happens for~$\base H^2\neq \base \Hc^2$.

Summarizing, if~$\rhop\base H^2>1$, then, for sufficiently small~$\eps$, \eqref{eq:Lopa0} admits no solution in~$\Sigma$; on the other hand, if~$\rhop\base H^2<1$, then, for sufficiently small~$\eps$, \eqref{eq:Lopa0} admits exactly two solutions, and they are on the boundary of~$\Sigma$.

It remains to prove that, in the first case, the two roots do not coincide with the poles, i.e.
\[ \mu=0,\qquad \mu=\pm\,i\,\frac{\base H}{\sqrt{\base\rho(1+\alpha\base H^2)}}, \qquad \tau=0\,,\]
or with the zeros of~$\omega_1, \omega_2$.

The case~$\mu=0$ corresponds to~$V=0$, thus~\eqref{cond.ammiss} is trivially false. The second case gives
\[ V=\pm\frac{\base H}{\sqrt{\base\rho(1+\alpha\base H^2)}}; \quad \text{hence,}\quad \base\rho V^2= \frac{\base H^2}{1+\alpha\base H^2}<\base H^2,\]
and~\eqref{cond.ammiss} is violated once again. The case~$\tau=0$, i.e. $\eta=1$, implies $\mu=i\base v $, so that
\[ \base\rho V^2= \base\rho\base v^2, \]
and~\eqref{cond.ammiss} might be satisfied. However, setting~$\eps=0$ and replacing~$z_+=\base\rho\base v^2$ in~\eqref{zpm}, we derive
\begin{equation} \base\rho\base v^2=\frac{\rhop (\base H^4 - \base \Hc^4) + 2\base H^2 \pm \sqrt{D}}{2(1+\rhop \base H^2)}\,. \label{eq:cond}\end{equation}
Therefore, if~\eqref{eq:nottau} holds, the two roots are not given by~$\pm(0,0,1)$ for sufficiently small~$\eps$. Notice that it is not necessary to prevent \eqref{eq:cond}  if $\rhop\base H^2>1$ or $\base H^2>\base\rho\base v^2$, that is, if $\base H^2>\min\{\rhop^{-1}, \base\rho\base v^2\}$.

If~\eqref{eq:null1} holds, then $V=\pm \base H/\sqrt{\base\rho}$, i.e. $\base\rho V^2=\base H^2$, so that~\eqref{cond.ammiss} is violated. If~\eqref{eq:null2} holds, then $V=\pm(\rhop \base\rho)^{-\frac12}$. At~$\eps=0$, we obtain $z=\rhop^{-1}$ which, replaced in \eqref{zpm}, gives $\rhop \base H^2=1$, which contradicts~\eqref{eq:notnull}. Due to~\eqref{eq:notnull}, we may exclude that the roots satisfy~\eqref{eq:null2} for sufficiently small~$\eps$. 
If~\eqref{eq:null3} holds, then $V=\base v \pm \eps^{-1}$, in particular~$V^2\to\infty$ as~$\eps\to0$, and this contradicts the fact that the coefficients in~\eqref{eq:LopaV} are bounded for~$\eps\in[0,1]$. 
\end{proof}

\section{Constructing a symmetrizer}\label{symmetrizer}

\subsection{Classification of the points on $\Sigma$}\label{Classification}

We now turn to the construction of our degenerate Kreiss'
symmetrizer, proceeding as in \cite{nappes}. The construction is microlocal and is achieved near
any point $(\tau_0,\eta_0) \in \Sigma$. The analysis is rather
long since one has to distinguish between many different cases.
In the end, we shall consider a partition of unity to patch
things together and derive our energy estimate.

 We classify the points as follows and construct a symmetrizer in a suitable neighborhood of any point of~$\Sigma$, with the exception of poles, that will be treated in a different way.
\begin{enumerate}
\item Interior points,  where $\gamma>0$ and the Lopatinski\u{\i} condition holds, i.e. $\triangle\neq 0$ (Subsection~\ref{ssec:interior}). $\Ac$ is diagonalizable.
\item Boundary points, where $\gamma=0$, which are not poles and such that~$\omega_1\omega_2\neq0$ and the Lopatinski\u{\i} condition holds (Subsection~\ref{ssec:boundaryLop}). $\Ac$ is diagonalizable.
\item Boundary points, where $\gamma=0$, which are not poles and such that~$\omega_1\omega_2\neq0$ and the Lopatinski\u{\i} condition does not hold, i.e. $\triangle=0$ (Subsection~\ref{ssec:boundarynoLop}). $\Ac$ is diagonalizable.
\item Points where $\omega_1=0$, i.e. $\gamma=0$ and either $\delta+\base v \eta = \pm \base H\eta/\sqrt{\base\rho}$, or $\delta+\base v \eta = \pm \eta/\sqrt{\rhop\base\rho}$ (Subsection~\ref{ssec:nodiag}). The points are not poles and $\Ac$ is not diagonalizable: the block in the Jordan matrix corresponding to $\omega_1$ is given by $\begin{pmatrix} 0 & 1 \\ 0 & 0 \end{pmatrix}$. The Lopatinski\u{\i} condition holds.
\item Points where $\omega_2=0$, i.e. $\gamma=0$ and $\eta=\pm\eps\delta$ (Subsection~\ref{ssec:nodiag}). The points are not poles and $\Ac$ is not diagonalizable: the block in the Jordan matrix corresponding to $\omega_2$ is given by $\begin{pmatrix} 0 & 0 \\ 1 & 0 \end{pmatrix}$. The Lopatinski\u{\i} condition holds. 
\item Poles (CASE 1, CASE 2, CASE 3, see respectively Subsections~\ref{polotau}, \ref{polomu}, \ref{polocaso2})
\[ \tau=0\,, \qquad \mu=0\,, \qquad \mu= \pm\,i\,\frac{\base H}{\sqrt{\base\rho(1+\rhop\base H^2})} \eta. \]
The Lopatinski\u{\i} condition does not hold in poles $ \tau=0$ and $\mu=0$, while it is satisfied in the third case.
\end{enumerate}
We recall that we assumed~$\eps$ to be sufficiently small. Moreover,  in the following, we will denote by~$\kappa$ a generic, positive, constant.

We begin with some useful results.

\subsection{Preliminary lemmas}

\begin{Lem}\label{lem:gamma0}
Let~$\gamma=0$. Then~$a_{12},a_{21} \in i\R$, $\p_\gamma a_{12}, \p_\gamma a_{21}\in \R$ and $\p_\gamma \omega_1^2 \in i\R$.
\end{Lem}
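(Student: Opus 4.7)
The plan is to exploit the single observation that when $\gamma=0$, the quantity $\mu = \tau + i\base v\eta = i(\delta + \base v\eta)$ is purely imaginary, hence $\mu^2 \in \R$. Since $\eta$, $\base\rho$, $\base v$, $\base H$, $\rhop$ are all real, the explicit formulas \eqref{def.a12case0} for $a_{12}$, $a_{21}$ and \eqref{omega1} for $\omega_1^2$ are rational functions of $\mu$ and $\eta$ with real coefficients, and each claim of the lemma is read off directly by inspecting the parities (in $\mu$) of numerator and denominator.

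First, I would check the values themselves. For $a_{12} = -(\mu^2\base\rho + \eta^2\base H^2)/\mu$ at $\gamma=0$, the numerator $\mu^2\base\rho + \eta^2\base H^2$ is real while $\mu \in i\R$, so $a_{12} \in i\R$. For $a_{21} = -\mu(\rhop\base\rho\mu^2+\eta^2)/[(\mu^2\base\rho\rhop+\eta^2)\base H^2+\mu^2\base\rho]$, the denominator is a real polynomial in $\mu^2$ and $\eta^2$, hence real, while the numerator is $\mu$ times a real quantity, hence imaginary. Thus $a_{21} \in i\R$.

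Next, since $\partial_\gamma\tau = \partial_\gamma\mu = 1$, I would differentiate the rewritten expression $a_{12} = -\mu\base\rho - \eta^2\base H^2/\mu$ to get
\[
\partial_\gamma a_{12} = -\base\rho + \frac{\eta^2\base H^2}{\mu^2},
\]
which is real at $\gamma=0$ since $\mu^2 \in \R$. For $a_{21}$, write it as $a_{21} = -\mu\,\varphi(\mu^2)$ where $\varphi(s) = (\rhop\base\rho s + \eta^2)/[(\base\rho\rhop s + \eta^2)\base H^2 + \base\rho s]$ is a rational function with real coefficients. Then
\[
\partial_\gamma a_{21} = -\varphi(\mu^2) - 2\mu^2\,\varphi'(\mu^2),
\]
which is real at $\gamma=0$ since $\mu^2 \in \R$ makes both $\varphi(\mu^2)$ and $\varphi'(\mu^2)$ real.

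Finally, from \eqref{omega1} one sees that $\omega_1^2 = a_{12}a_{21}$ is a rational function of $\mu^2$ alone with real coefficients, say $\omega_1^2 = g(\mu^2)$. Hence $\partial_\gamma \omega_1^2 = 2\mu\, g'(\mu^2)$, and at $\gamma=0$ this is the product of $g'(\mu^2) \in \R$ with $\mu \in i\R$, so $\partial_\gamma \omega_1^2 \in i\R$. There is essentially no obstacle here beyond a careful bookkeeping of real/imaginary parts; the lemma is a computational consequence of the symmetry $\mu \mapsto -\mu$ of the relevant rational expressions combined with $\mu^2 \in \R$ at $\gamma = 0$.
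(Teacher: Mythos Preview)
Your proof is correct and takes a cleaner, more structural route than the paper. The paper introduces the parametrization $\delta+\base v\eta=\pm\sqrt{K}\,\eta$ (so that $\mu^2=-K\eta^2$) and computes $a_{12}$, $a_{21}$, $\p_\gamma a_{12}$, $\p_\gamma a_{21}$ explicitly in terms of~$K$, then concludes $\p_\gamma\omega_1^2=a_{12}\,\p_\gamma a_{21}+a_{21}\,\p_\gamma a_{12}\in i\R$ from the product rule. You instead observe directly that $a_{12}$, $a_{21}$ are odd rational functions of~$\mu$ with real coefficients (hence purely imaginary when $\mu\in i\R$), their $\gamma$-derivatives are even in~$\mu$ (hence real), and $\omega_1^2=g(\mu^2)$ gives $\p_\gamma\omega_1^2=2\mu\,g'(\mu^2)\in i\R$. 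Your parity argument is shorter and makes the underlying mechanism transparent; the paper's explicit formulas, however, are not wasted effort, since the exact expressions for $\p_\gamma a_{12}$ and $\p_\gamma a_{21}$ in terms of~$K$ are reused in the proofs of Lemmas~\ref{Lem:Regamma} and~\ref{Lem:notnullp} to show these derivatives are \emph{nonzero} at the relevant points.
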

\begin{proof}
In the following we put
\begin{equation}\label{eq:immu}
\delta+\base v \eta=\pm\sqrt{K}\eta,
\end{equation}
for some~$K\geq0$ if~$\eta\neq0$. If~$\eta=0$, so that~$\delta\neq0$, we formally set~$K=\infty$ and we replace the undetermined object~$\pm\sqrt{K}\eta$ by~$\delta$, where needed.
Due to~$\mu^2=-K\eta^2$, we get:
\begin{align*}
a_{12}
    & = \pm\frac{K\base\rho-\base H^2}{i\sqrt{K}}\,\eta\,, \\
a_{21}
    & = -\frac{\pm i\sqrt{K}(1-\rhop\base\rho K)}{(1-\rhop\base\rho K)\base H^2-\base\rho K}\,\eta\,.
\end{align*}
In view of
\[ \p_\gamma \mu|_{\gamma=0} = \p_\gamma \bar\mu|_{\gamma=0} = 1, \qquad \mu \p_\gamma (\mu^2)|_{\gamma=0} = -2K\eta^2, \]
setting
\[ k=(1-\rhop\base\rho K)\base H^2-\base\rho K, \]
we derive:
\begin{align*}
\p_\gamma a_{12}
    & = -\frac{-K\base\rho+\base H^2}{-K} +2K\, \frac{\base\rho (-K)-(-K\base\rho+\base H^2)}{K^2} = -\frac{K\base\rho+\base H^2}{K} \,,\\
\p_\gamma a_{21}
    & = -\frac{1-\rhop\base\rho K}{k}+2K\,\frac{\rhop\base\rho k-(1-\rhop\base\rho K)\base\rho(\rhop \base H^2+1)}{k^2} \\
    & = \frac{(3\rhop\base\rho K-1)k -2K(1-\rhop\base\rho K)\base\rho(\rhop \base H^2+1)}{k^2} \\
    & = \frac{-\rhop^2\base\rho^2\base H^2 K^2-\rhop\base\rho^2 K^2+2\rhop\base\rho K\base H^2-\base\rho K-\base H^2}{k^2} \\
    & = \frac{-(\rhop\base\rho K-1)^2\base H^2-\rhop\base\rho^2 K^2-\base\rho K}{k^2} \,.
\end{align*}

Finally, we get
\[
\p_\gamma \omega_1^2 = a_{12} \p_\gamma a_{21}+a_{21}\p_\gamma a_{12} \in i\R\,,
\]
which proves our claim.
\end{proof}
From Lemma~\ref{lem:gamma0}, we derive the following.
\begin{Lem}\label{Lem:Regamma}
Let~$\gamma=0$. Then:
\begin{itemize}
\item If we are not in~\eqref{eq:caso2} and~$\omega_1\in i\R\setminus\{0\}$, then~$\p_\gamma\omega_1(\tau,\eta)\in\R\setminus\{0\}$;
\item If~$\omega_2\in i\R\setminus\{0\}$, then~$\p_\gamma\omega_2(\tau,\eta)\in\R\setminus\{0\}$.
\end{itemize}
\end{Lem}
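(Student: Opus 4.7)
The plan is to deduce both assertions from Lemma~\ref{lem:gamma0} by combining it with the chain-rule identity
\[
2\omega_j\,\partial_\gamma\omega_j \;=\; \partial_\gamma\omega_j^2,\qquad j=1,2,
\]
which transfers information about $\omega_j^2$ (an algebraic function of the coefficients) to $\omega_j$ itself. Once this identity is in place, the reality of $\partial_\gamma\omega_j$ will be essentially automatic---imaginary divided by imaginary is real---and the only real work lies in ruling out vanishing.

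For $\omega_1$: Lemma~\ref{lem:gamma0} gives, at $\gamma=0$, that $a_{12},a_{21}\in i\R$ (whence $\omega_1^2=a_{12}a_{21}\in\R$, consistent with $\omega_1\in i\R$) and that $\partial_\gamma a_{12},\,\partial_\gamma a_{21}\in\R$, so
\[
\partial_\gamma\omega_1^2 \;=\; a_{12}\,\partial_\gamma a_{21}+a_{21}\,\partial_\gamma a_{12}\;\in\;i\R.
\]
Writing $a_{12}=i\alpha$ and $a_{21}=i\beta$ with $\alpha,\beta\in\R$, the hypothesis $\omega_1^2<0$ becomes $\alpha\beta>0$, so $\alpha$ and $\beta$ share a strict sign. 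I would then read off from the explicit formulas in the proof of Lemma~\ref{lem:gamma0} that, once the pole cases~\eqref{eq:caso1} and~\eqref{eq:caso2} are excluded, both $\partial_\gamma a_{12}$ and $\partial_\gamma a_{21}$ are strictly negative: $\partial_\gamma a_{12}=-(K\base\rho+\base H^2)/K<0$, while the numerator $-(\rhop\base\rho K-1)^2\base H^2-\rhop\base\rho^2 K^2-\base\rho K$ of $\partial_\gamma a_{21}$ is a sum of non-positive terms in which $-\base\rho K$ and $-\rhop\base\rho^2 K^2$ are strictly negative as soon as $K>0$. With $\mathrm{sgn}(\alpha)=\mathrm{sgn}(\beta)$ and $\partial_\gamma a_{12},\,\partial_\gamma a_{21}<0$, the two summands of $\alpha\,\partial_\gamma a_{21}+\beta\,\partial_\gamma a_{12}$ carry the same sign and cannot cancel; hence $\partial_\gamma\omega_1^2\neq0$, and dividing by $2\omega_1\in i\R\setminus\{0\}$ yields $\partial_\gamma\omega_1\in\R\setminus\{0\}$.

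For $\omega_2$ the computation is essentially immediate: from $\omega_2^2=\eps^2\tau^2+\eta^2$ one gets $\partial_\gamma\omega_2^2=2\eps^2\tau$, which at $\gamma=0$ equals $2i\eps^2\delta\in i\R$. The hypothesis $\omega_2\in i\R\setminus\{0\}$ is equivalent to $\omega_2^2=\eta^2-\eps^2\delta^2<0$, which rules out $\delta=0$; hence $\partial_\gamma\omega_2^2\neq0$ and $\partial_\gamma\omega_2=\partial_\gamma\omega_2^2/(2\omega_2)\in\R\setminus\{0\}$ by the same imaginary-over-imaginary mechanism.

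The only subtle step is the sign analysis for $\partial_\gamma a_{21}$: one must verify, uniformly in the admissible range of $K$ (the limiting case $\eta=0$, i.e.\ $K\to\infty$, is handled by returning to the pre-substitution formulas for $a_{12}$ and $a_{21}$, where the same strict negativities persist), that the numerator stays strictly away from zero. Once this routine check is done, both claims follow from the same short algebraic mechanism.
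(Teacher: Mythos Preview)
Your argument is correct, and for the $\omega_1$ part it follows a genuinely different route from the paper's. Both proofs share the first step (use $2\omega_1\partial_\gamma\omega_1=\partial_\gamma\omega_1^2\in i\R$ from Lemma~\ref{lem:gamma0} to get $\partial_\gamma\omega_1\in\R$), but they diverge on the nonvanishing. The paper differentiates the closed formula $\omega_1^2=\eta^2+\mu^4\rhop\base\rho^2/k$ directly, obtains $\partial_\gamma(\omega_1^2)=\pm 2i\sqrt{K}\eta\,\rhop\base\rho^2\,\frac{K}{k^2}\bigl[\base\rho(1+\rhop\base H^2)K-2\base H^2\bigr]$, and then argues by contradiction: setting the bracket to zero forces $-k=\base H^2$ and $K=2\base H^2/(\base\rho(1+\rhop\base H^2))$, which combined with the inequality $0<-k<\rhop\base\rho^2 K^2$ (encoded in $\omega_1^2<0$) yields $(1-\rhop\base H^2)^2<0$.

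You instead stay with the product decomposition $\partial_\gamma\omega_1^2=a_{12}\partial_\gamma a_{21}+a_{21}\partial_\gamma a_{12}$ and do pure sign bookkeeping: $\omega_1^2<0$ forces $\Im a_{12}$ and $\Im a_{21}$ to share a strict sign, while the explicit formulas from Lemma~\ref{lem:gamma0} make both $\partial_\gamma a_{12}$ and $\partial_\gamma a_{21}$ strictly negative once $K>0$ (and $K=0$ is ruled out since $\mu=0$ gives $\omega_1^2=\eta^2>0$, contradicting the hypothesis). Your approach is shorter and avoids the algebraic contradiction, at the cost of relying on the sign structure of two separate quantities rather than a single closed expression; the paper's approach, in exchange, produces an explicit formula for $\partial_\gamma(\omega_1^2)$ that could be reused. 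For $\omega_2$ the two arguments are essentially identical.
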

\begin{Rem}\label{remark2}
Being the real part of~$\omega_1$ strictly positive for~$\gamma>0$, we have $\Re\omega_1\geq0$ for all points of~$\Sigma$. There are two possibilities: if in a given point of $\Sigma$ there holds $\Re\omega_1>0$, then in a sufficiently small neighborhood $\Vc$ we have~$\Re\omega_1\geq\kappa$ for some~$\kappa>0$. Because $\gamma\in[0,1]$ on~$\Sigma$, the inequality ~$\Re\omega_1\geq\kappa\gamma$ holds as well. Alternatively, if in a given point of $\Sigma$ (excluding~\eqref{eq:caso2}) there holds $\Re\omega_1=0$, then, by Lemma~\ref{Lem:Regamma}, in a sufficiently small neighborhood $\Vc$ it follows that~$\Re\omega_1\geq\kappa\gamma$.
Similarly, $\Re\omega_2\geq \kappa\gamma$ for any point on~$\Sigma$. See \cite{nappes}.

\end{Rem}

%
%
\begin{proof}
Let us compute~$\omega_1^2$ when~$\gamma_0=0$. We set~$K$ as in~\eqref{eq:immu}, and
\[ k=(1-\rhop\base\rho K)\base H^2-\base\rho K, \]
which is nonzero thanks to the fact that we excluded~\eqref{eq:caso2}. Then
\[ \omega_1^2(\tau,\eta) = -\frac{(K\base\rho-\base H^2)(1-\rhop\base\rho K)}{k} \eta^2 = \eta^2 \left(1+ \frac{\rhop\base\rho^2 K^2}{k}\right) \,. \]
We notice that~$K$ may not be zero, i.e. we can exclude~\eqref{eq:caso1}, otherwise~$\omega_1^2=\eta^2>0$.

By~$\omega_1^2(\tau,\eta)<0$, we deduce~$k<0$ and $\eta\neq 0$. Moreover,
\begin{equation}\label{eq:compRegamma}
0<-k < \rhop\base\rho^2 K^2.
\end{equation}
From
$2\omega_1\p_\gamma (\omega_1) = \p_\gamma(\omega_1^2)
\in i\R$,
thanks to Lemma~\ref{lem:gamma0}, and recalling that~$\omega_1\in i\R$, we derive that~$\p_\gamma(\omega_1)\in \R$. 
%
%
Taking the derivative of
\[ \omega_1^2 = \eta^2 + \frac{\mu^4\rhop\base\rho^2}{(\mu^2\base\rho\rhop+\eta^2)\base H^2+\mu^2\base\rho}, \]
replacing~$\gamma=0$ and~$\delta+\base v\eta=\pm\sqrt{K}\eta$, we get
\begin{align*}
\p_\gamma(\omega_1^2) & = \pm 2i\sqrt{K}\eta\,\frac{-2\rhop\base\rho K k-\rhop\base\rho^2 (1+\rhop \base H^2)K^2}{k^2} \\
 & =\pm 2i\sqrt{K}\eta\rhop\base\rho^2 \frac{K}{k^2} [\base \rho (1+\rhop\base H^2)K-2\base H^2]\, .
\end{align*}
%
%
By contradiction, let $\p_\gamma(\omega_1^2)=0$. Then,
\[ 2\base H^2=\base\rho(1+\rhop\base H^2)K=-2k. \]
Replacing the expressions so obtained for~$k$ and~$K$ in~\eqref{eq:compRegamma}, we get:
\[ \base H^2 =-k<\rhop\base\rho^2 K^2 =  \frac{4\rhop\base H^4}{(1+\rhop\base H^2)^2}, \]
which is false, being it equivalent to~$(1-\rhop\base H^2)^2<0$.

If $\omega_2\in i\R\setminus{0}$, then $\omega_2^2 <0$ and
\[ 2\omega_2 \p_\gamma \omega_2 = \p_\gamma (\omega_2^2) = 2\eps^2(\gamma+i\delta),\]
in particular $\omega_2 \p_\gamma \omega_2=i\eps^2\delta\in i\R$ for~$\gamma=0$. Therefore, $\p_\gamma\omega_2\in\R$. It can not be zero, otherwise~$\tau=0$ and we derive the contradiction $\eta^2=\omega_2^2<0$.
\end{proof}
\begin{Lem}\label{prop:derivgamma}
If~$\gamma=0$, then~$\p_\gamma (\omega_j^2)\in i\R\setminus\{0\}$ for~$j=1,2$.
\end{Lem}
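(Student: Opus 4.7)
The plan is to combine the $i\R$-membership, already supplied by Lemma~\ref{lem:gamma0} for $j=1$ and immediate for $j=2$, with a case-by-case verification of nonvanishing, recycling the explicit derivative formula computed inside the proof of Lemma~\ref{Lem:Regamma}. For $j=2$, differentiating $\omega_2^2=\eps^2\tau^2+\eta^2$ yields $\partial_\gamma(\omega_2^2)=2\eps^2\tau$, so at $\gamma=0$ one has $\partial_\gamma(\omega_2^2)=2i\eps^2\delta\in i\R$; this vanishes only at $\delta=0$, i.e.\ $\tau=0$, which is the pole case~\eqref{eq:caso3} treated separately in Subsection~\ref{polotau}.

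For $j=1$, the membership $\partial_\gamma(\omega_1^2)\in i\R$ is exactly Lemma~\ref{lem:gamma0}, and only nonvanishing has to be addressed. Parametrizing $\delta+\base v\eta=\pm\sqrt K\,\eta$ and setting $k=(1-\rhop\base\rho K)\base H^2-\base\rho K$, the differentiation already carried out in Lemma~\ref{Lem:Regamma} yields
\[
\partial_\gamma(\omega_1^2)\;=\;\pm\,2i\sqrt K\,\eta\,\rhop\base\rho^2\,\frac{K}{k^2}\,\bigl[\,K\base\rho(1+\rhop\base H^2)-2\base H^2\,\bigr].
\]
Away from the poles~\eqref{eq:caso1} ($K=0$, equivalently $\mu=0$) and~\eqref{eq:caso2} ($k=0$) the only factor that can vanish is the bracket, whose unique zero is the value $K_\star:=2\base H^2/[\base\rho(1+\rhop\base H^2)]$. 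If $\omega_1\in i\R\setminus\{0\}$, the sign estimate~\eqref{eq:compRegamma}, namely $0<-k<\rhop\base\rho^2 K^2$, is available and, specialized to $K=K_\star$, collapses to $\base H^2<4\rhop\base H^4/(1+\rhop\base H^2)^2$, i.e.\ $(1-\rhop\base H^2)^2<0$, contradicting~\eqref{eq:notnull}. If instead $\omega_1=0$, one is in~\eqref{eq:null1} or~\eqref{eq:null2}, so $K=\base H^2/\base\rho$ or $K=1/(\rhop\base\rho)$; direct substitution of each value into the bracket reduces it to a nonzero multiple of $\rhop\base H^2-1$, again nonzero by~\eqref{eq:notnull}.

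The main technical subtlety is that the bracket does genuinely vanish at $K_\star$, where a short computation yields $\omega_1^2=\eta^2(1-\rhop\base H^2)^2/(1+\rhop\base H^2)^2>0$, i.e.\ one is in the hyperbolic regime. This regime is irrelevant to the microlocal symmetrizer construction of Section~\ref{symmetrizer} (there $\Ac$ is already diagonalizable with real, separated eigenvalues, and the derivative $\partial_\gamma(\omega_j^2)$ plays no role), so the hypothesis $\omega_1^2\leq 0$ tacitly underlying the lemma's usage is precisely what makes the sign contradiction available and ties the argument to assumption~\eqref{eq:notnull}.
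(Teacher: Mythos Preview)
Your argument is correct and follows essentially the same route as the paper: invoke Lemma~\ref{lem:gamma0} for the $i\R$-membership, differentiate $\omega_2^2$ directly, and for $j=1$ reuse the explicit formula for $\partial_\gamma(\omega_1^2)$ from the proof of Lemma~\ref{Lem:Regamma}, checking that the bracket $K\base\rho(1+\rhop\base H^2)-2\base H^2$ is nonzero at the relevant points by reducing to~\eqref{eq:notnull}.

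The paper's own proof is more minimal in scope: it checks nonvanishing of $\partial_\gamma(\omega_1^2)$ only at the points~\eqref{eq:null1} and~\eqref{eq:null2} (where $\omega_1=0$), since those are the only places the lemma is actually invoked in Subsection~\ref{ssec:nodiag}; the case $\omega_1\in i\R\setminus\{0\}$ was already dispatched inside Lemma~\ref{Lem:Regamma}. Your write-up is more thorough, and your final paragraph is a genuine observation the paper does not make: the lemma as literally stated (``for all $\gamma=0$'') is false at $K_\star$, where the bracket vanishes and $\omega_1^2>0$. You correctly explain why this exception is harmless for the symmetrizer construction. This is a useful clarification rather than a departure in method.
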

%
%
\begin{proof}
From Lemma~\ref{lem:gamma0}, we already have $\p_\gamma (\omega_1^2)\in i\R$.
Now, by contradiction, let~$\p_\gamma(\omega_1^2)=0$ when~\eqref{eq:null2} holds, that is, $\rhop\base\rho K=1$. Then:
\[ 0 = \base \rho (1+\rhop\base H^2)K-2\base H^2 = \base\rho K - \base H^2 = \frac1\rhop (1-\rhop\base H^2), \]
which contradicts~\eqref{eq:notnull}. Similarly if~\eqref{eq:null1} holds, that is, $\base\rho K=\base H^2$.

On the other hand, when $\gamma=0$, we have $\p_\gamma (\omega_2^2) = 2i\eps^2\delta \in i\R\setminus{0}$ by an explicit computation.
\end{proof}
Thanks to Lemma~\ref{lem:gamma0}, we get another useful result.
\begin{Lem}\label{Lem:notnullp}
We have the following properties:
\begin{itemize}
\item if~$a_{12}=0$, then~$\p_\gamma a_{12},\p_\gamma a_{21}\neq0$,
\item if~$a_{21}=0$, then~$\p_\gamma a_{12},\p_\gamma a_{21}\neq0$,
\item if~$a_{34}=0$, then $\p_\gamma a_{34}\neq0$.
\end{itemize}
\end{Lem}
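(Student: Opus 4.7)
The plan is to observe that each of the three vanishing conditions forces $\gamma=0$, and then to substitute directly into the explicit $\gamma$-derivative formulas already computed in the proof of Lemma~\ref{lem:gamma0}. First I would note that $a_{12}=0$ requires $\mu^2\base\rho+\eta^2\base H^2=0$, so $\mu^2$ is real and negative; since $\mu=\gamma+i(\delta+\base v\eta)$ this forces $\gamma=0$ (the alternative $\delta+\base v\eta=0$ would yield $\mu^2=\gamma^2\ge0$). Exactly the same argument applies to $a_{21}=0$ (which gives $\mu^2=-\eta^2/(\rhop\base\rho)$) and to $a_{34}=0$ (which gives $\tau^2=-\eta^2/\eps^2$, hence $\gamma=0$). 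In particular, writing $\delta+\base v\eta=\pm\sqrt{K}\eta$ as in Lemma~\ref{lem:gamma0}, the hypothesis $a_{12}=0$ is equivalent to $\base\rho K=\base H^2$ (case \eqref{eq:null1}) and $a_{21}=0$ is equivalent to $\rhop\base\rho K=1$ (case \eqref{eq:null2}); assumption \eqref{eq:notnull} prevents these from coinciding, so in each sub-case we remain in the regime where the formulas of Lemma~\ref{lem:gamma0} are valid ($k\ne0$, i.e.\ we are not in \eqref{eq:caso2}).

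Next I would simply evaluate. Using
\[
\p_\gamma a_{12} = -\frac{K\base\rho+\base H^2}{K},\qquad \p_\gamma a_{21} = \frac{-(\rhop\base\rho K-1)^2\base H^2-\rhop\base\rho^2 K^2-\base\rho K}{k^2},
\]
for the first bullet ($a_{12}=0$, hence $\base\rho K=\base H^2$) one gets $\p_\gamma a_{12}=-2\base\rho\ne0$; the denominator becomes $k=-\rhop\base H^4\ne0$ and the numerator of $\p_\gamma a_{21}$ is a sum of three terms, all non-positive, with $-\base\rho K=-\base H^2<0$, so it is strictly negative. For the second bullet ($a_{21}=0$, hence $\rhop\base\rho K=1$) one has $K\base\rho=1/\rhop$, giving $\p_\gamma a_{12}=-\base\rho(1+\rhop\base H^2)\ne0$; moreover $k=-\base\rho K\ne0$ and the numerator of $\p_\gamma a_{21}$ reduces to $-\rhop\base\rho^2 K^2-\base\rho K=-2\base\rho K\ne0$. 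For the third bullet, a direct differentiation yields
\[
\p_\gamma a_{34} = -\frac{\eps^2\tau^2-\eta^2}{\eps\tau^2},
\]
and on $a_{34}=0$ the numerator equals $-2\eta^2$ while $\tau^2=-\eta^2/\eps^2\ne0$ (because $\eta\ne0$ on $\Sigma$, otherwise $(\gamma,\delta,\eta)=0$), so $\p_\gamma a_{34}=2\eta^2/(\eps\tau^2)\ne0$.

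The argument is really a short bookkeeping exercise; the only point that requires a moment's care is to check that the $k^2$ in the denominator of $\p_\gamma a_{21}$ does not vanish under either hypothesis $a_{12}=0$ or $a_{21}=0$, which is where assumption \eqref{eq:notnull} plays its role (preventing the collision with case \eqref{eq:caso2}). Once this is noted, the three conclusions follow by substituting the explicit value of $K$ into the already-derived formulas.
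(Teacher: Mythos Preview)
Your proof is correct and follows essentially the same approach as the paper: reduce to $\gamma=0$, identify the hypotheses $a_{12}=0$ and $a_{21}=0$ with the cases $\base\rho K=\base H^2$ and $\rhop\base\rho K=1$ respectively, and substitute into the explicit $\gamma$-derivative formulas from Lemma~\ref{lem:gamma0}. The only minor differences are cosmetic: for $\p_\gamma a_{21}$ at $a_{12}=0$ you argue by sign (sum of non-positive terms, one strictly negative) whereas the paper simplifies to $-\bigl((\rhop\base H^2-1)^2+\rhop\base H^2+1\bigr)/(\rhop^2\base H^6)$, and for $\p_\gamma a_{34}$ you leave the value as $2\eta^2/(\eps\tau^2)$ rather than simplifying to $-2\eps$; also your remark that \eqref{eq:notnull} is what prevents $k=0$ is a slight misattribution (in fact $k=-\rhop\base H^4$ and $k=-\base\rho K$ are nonzero without it), but since you verify $k\neq0$ directly this does not affect the argument.
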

We remark that $a_{43}=-\eps\tau\neq 0$ if we are not in~\eqref{eq:caso3}, and that~$\p_\gamma a_{43}=-\eps\neq0$. Thanks to~\eqref{eq:notnull}, it follows that~\eqref{eq:null1} and~\eqref{eq:null2} are not verified at the same time.

\begin{proof}
First, we assume~\eqref{eq:null1}. We set~$K$ as in~\eqref{eq:immu}, that is, $K=\base H^2/\base \rho$; hence,
\begin{align*}
\p_\gamma a_{12}|_{\base \rho K=\base H^2}
    & = -2\base\rho \neq0, \\
\p_\gamma a_{21}|_{\base \rho K=\base H^2}
    & = \frac{-(\rhop \base H^2-1)^2\base H^2-\rhop\base H^4-\base H^2}{\rhop^2\base H^8} = - \frac{\rhop^2 \base H^4-\rhop\base H^2+2}{\rhop^2\base H^6}= - \frac{(\rhop\base H^2-1)^2+\rhop\base H^2+1}{\rhop^2\base H^6} \neq0\,.
\end{align*}
On the other hand, if we assume~\eqref{eq:null2}, then~$K=1/(\rhop\base\rho)$ and
\begin{align*}
\p_\gamma a_{12}|_{\rhop\base\rho K=1}
    & = -\base\rho\,(1+\rhop\base H^2)\neq0, \\
\p_\gamma a_{21}|_{\rhop\base\rho K=1}
    & = -2\rhop \neq0.
\end{align*}
If~$0=a_{34}=\eps^2\tau^2+\eta^2=0$, then
\begin{align*}
\p_\gamma a_{34} =-\eps+\frac{\eta^2}{\eps\tau^2} = -2\eps\neq 0\, .
\end{align*}
\end{proof}

Now we proceed with the construction of the degenerate Kreiss' symmetrizer, following the classification of points given in Subsection \ref{Classification}. After that, we will consider the special case of poles.

\subsection{Interior points} \label{ssec:interior}

We consider a point $P_0=(\tau_0, \eta_0)\in\Sigma$ with $\gamma_0\doteq\Re\tau_0>0$, i.e. in the internal part of~$\Sigma$. The Lopatinski\u{\i} condition holds and $\Ac(\tau,\eta)$ is diagonalizable in a neighborhood $\Vc$ of $P_0$. Besides
\begin{align*}
e_1
	& = \mu\chi (a_{12},-\omega_1,0,0)\tm\,,\\
e_2
	& = (0,0,\omega_2,\eps \tau)\tm\,,
\intertext{we introduce}
e_3
	& = \mu\chi (a_{12},\omega_1,0,0)\tm\,,\\
e_4
	& = (0,0,\omega_2,-\eps \tau)\tm\,,
\end{align*}
and we define in~$\Vc$ the invertible matrix $T(\tau,\eta)$, given by $T^{-1}=T^{-1}(\tau,\eta)=\begin{pmatrix}e_1 & e_3 & e_2 & e_4\end{pmatrix}$. Notice the inverted order between the eigenvectors $e_2, e_3$, introduced in order to separate the plasma block from the vacuum one.

Clearly, the eigenvectors are smooth, $T\in\Cc^\infty(\Vc,M_4)$ has a nonzero determinant and
\begin{equation}\label{eq:TAT}
T \Ac T^{-1} = \begin{pmatrix}
-\omega_1 & 0 & 0 & 0 \\
0 & \omega_1 & 0 & 0 \\
0 & 0 & -\omega_2 & 0 \\
0 & 0 & 0 & \omega_2
\end{pmatrix} .
\end{equation}
We define the symmetrizer
\begin{align*}
r(\tau,\eta) = \begin{pmatrix}
-1 & 0 & 0 & 0 \\
0 & K' & 0 & 0 \\
0 & 0 & -1 & 0 \\
0 & 0 & 0 & K'
\end{pmatrix} ,
\end{align*}
with~$K'\geq1$ to be fixed sufficiently large, so that
\begin{align*}
rT \Ac T^{-1} = \begin{pmatrix}
\omega_1 & 0 & 0 & 0 \\
0 & K'\omega_1 & 0 & 0 \\
0 & 0 & \omega_2 & 0 \\
0 & 0 & 0 & K'\omega_2
\end{pmatrix} .
\end{align*}
Due to~$\Re\omega_j\geq \kappa$, $j=1,2$, in~$\Vc$ for some~$\kappa>0$, we immediately derive
\begin{equation}\label{eq:symmint0}
\Re(r(\tau,\eta)T(\tau,\eta)\Ac(\tau,\eta) T(\tau,\eta)^{-1})\geq \kappa \Id\,, \qquad\forall (\tau,\eta)\in\Vc,
\end{equation}
where~$\Re M = (M+M^*)/2$ for any~$M\in M_4(\C)$. On the other hand, as in \cite{nappes} we show that there holds
\begin{equation}\label{eq:betaint}
r(\tau,\eta)+ C\tilde\beta(\tau,\eta)^* \tilde\beta(\tau,\eta) \geq \Id\, ,  \qquad\forall (\tau,\eta)\in\Vc,
\end{equation}
where $C$ is a positive constant and $\tilde\beta \doteq \beta T^{-1}$, for sufficiently large~$K'\geq1$. In order to prove it, let us recall that the first and the third columns of $T^{-1}$ are the generators $e_1, e_2$ of the stable subspace $E^-(\tau,\eta)$ of $\Ac(\tau,\eta)$. Since the Lopatinski\u{\i} determinant does not vanish at $(\tau_0,\eta_0)$, there exists a constant $C_0$ independent of $(\tau,\eta)\in \Vc$ such that
\[
|Z_1|^2+|Z_3|^2 \le C_0(|Z_2|^2+|Z_4|^2+|\tilde\beta(\tau,\eta) Z|^2)
\]
for all $Z\in \C^4$. Then
\begin{align*}
&\langle r(\tau,\eta)Z, Z\rangle_{\C^4} + 2C_0|\tilde\beta(\tau,\eta) Z|^2\\
&=-|Z_1|^2-|Z_3|^2 + K'(|Z_2|^2+|Z_4|^2) + 2C_0|\tilde\beta(\tau,\eta) Z|^2\\
 &\ge |Z_1|^2 + |Z_3|^2 + (K'-2C_0)(|Z_2|^2+|Z_4|^2),
\end{align*}
which gives \eqref{eq:betaint} for $K'$ large enough (e.g. $K'=2C_0+1$).

\subsection{Boundary points where the Lopatinski\u{\i} condition holds (except poles or $\omega_1\omega_2=0$)} \label{ssec:boundaryLop}

We consider a point $P_0=(\tau_0, \eta_0)\in\Sigma$ with $\gamma_0\doteq\Re\tau_0=0$, which is not a pole, such that the Lopatinski\u{\i} condition holds at~$P_0$, i.e. $\Delta(\tau_0,\eta_0)\not=0$, and $\Ac(P_0)$ is diagonalizable. We fix a sufficiently small neighborhood $\Vc$ of $P_0$, not containing any pole, in which the Lopatinski\u{\i} condition holds and $\Ac$ is diagonalizable. We may proceed as we did for the interior points, but now we only have $\Re\omega_j\geq \kappa\gamma$, $j=1,2$, in~$\Vc$, for a suitable constant $\kappa>0$. This is due to Lemma~\ref{Lem:Regamma}, see also Remark \ref{remark2}. Therefore we may replace \eqref{eq:symmint0} by
\begin{equation}\label{eq:symmint}
\Re(r(\tau,\eta)T(\tau,\eta)\Ac(\tau,\eta) T(\tau,\eta)^{-1})\geq \kappa\gamma \Id\,, \qquad\forall (\tau,\eta)\in\Vc,
\end{equation}
whereas~\eqref{eq:betaint} still holds true, because of the Lopatinski\u{\i} condition. In the interior points considered in Subsection \ref{ssec:interior} estimate~\eqref{eq:symmint} holds as well, because~\eqref{eq:symmint0} trivially implies~\eqref{eq:symmint}, since~$\gamma\in[0,1]$ on~$\Sigma$.

\subsection{Boundary points where the Lopatinski\u{\i} condition does not hold  (except poles or $\omega_1\omega_2=0$)} \label{ssec:boundarynoLop}

We consider a point $P_0=(\tau_0, \eta_0)\in\Sigma$ with $\gamma_0\doteq\Re\tau_0=0$, which is not a pole and such that the Lopatinski\u{\i} condition does not hold at~$P_0$, that is $\Delta(\tau_0,\eta_0)=0$. From Lemma \ref{lemma:roots} it follows that $\Ac(P_0)$ is diagonalizable. We fix a neighborhood $\Vc$ of $P_0$, not containing any pole, in which $\Ac$ is diagonalizable. We construct~$T$ as we did for the interior points, obtaining~\eqref{eq:TAT}, but now we define the symmetrizer
\begin{align*}
r(\tau,\eta) = \begin{pmatrix}
-\gamma^2 & 0 & 0 & 0 \\
0 & K' & 0 & 0 \\
0 & 0 & -\gamma^2 & 0 \\
0 & 0 & 0 & K'
\end{pmatrix} ,
\end{align*}
where $K'\ge1$ is taken sufficiently large, so that
\begin{align*}
rT \Ac T^{-1} = \begin{pmatrix}
\gamma^2\omega_1 & 0 & 0 & 0 \\
0 & K'\omega_1 & 0 & 0 \\
0 & 0 & \gamma^2\omega_2 & 0 \\
0 & 0 & 0 & K'\omega_2
\end{pmatrix}.
\end{align*}
Taking into account of Lemma~\ref{Lem:Regamma} once again, it follows that~\eqref{eq:symmint} is replaced by
\begin{equation}\label{eq:symmLopa}
\Re(r(\tau,\eta)T(\tau,\eta)\Ac(\tau,\eta) T(\tau,\eta)^{-1})\geq \kappa\gamma^3 \Id\,, \qquad\forall (\tau,\eta)\in\Vc.
\end{equation}
On the other hand, due to the fact that the roots of Lopatinski\u{\i} determinant are simple, we may derive the estimate
\[
\left| \beta(\tau,\eta)\left( e_1(\tau,\eta), e_2(\tau,\eta)  \right)Z'\right|^2\ge \kappa_0\gamma^2|Z'|^2, \qquad \forall Z'\in\C^2,
\]
which implies
\[
\kappa_0\gamma^2(|Z_1|^2+|Z_3|^2) \le C_0(|Z_2|^2+|Z_4|^2+|\tilde\beta(\tau,\eta) Z|^2),
\]
for all $Z\in \C^4$, where  $\tilde\beta \doteq \beta T^{-1}$. 
Then, following~\cite{nappes}, we modify the proof given in Subsection \ref{ssec:interior} and obtain
\begin{equation}\label{eq:betaLopa}
r(\tau,\eta)+ C\tilde\beta(\tau,\eta)^* \tilde\beta(\tau,\eta) \geq \gamma^2 \Id\, , \qquad\forall (\tau,\eta)\in\Vc,
\end{equation}
which replaces~\eqref{eq:betaint}.

\subsection{Boundary points where $\omega_1\omega_2=0$} \label{ssec:nodiag}

We consider a point $P_0=(\tau_0, \eta_0)\in\Sigma$ with $\gamma_0\doteq\Re\tau_0=0$, which is not a pole but such that $\Ac(P_0)$ is not diagonalizable. It follows that the Lopatinski\u{\i} condition holds at~$P_0$. We fix a neighborhood $\Vc$ of $P_0$, not containing any pole, and such that the Lopatinski\u{\i} condition holds in every point.

Let us assume~\eqref{eq:null2}. Thanks to~\eqref{eq:notnull}, it is not restrictive to assume that~\eqref{eq:null1} is nowhere verified in~$\Vc$. Also, we may assume that~\eqref{eq:null3} is nowhere verified in~$\Vc$, provided $\eps$ is sufficiently small. We replace the matrix~$T$, used to deal with points where~$\Ac$ was diagonalizable, with a new matrix~$T$, invertible on~$\Vc$. We define:
\[ T=\begin{pmatrix}
a_{12}^{-1} & 0 &0&0 \\
a_{21} & -i&0&0 \\
0&0& (2\omega_2)^{-1}& (2\eps\tau)^{-1}\\
0&0& (2\omega_2)^{-1}& -(2\eps\tau)^{-1}
\end{pmatrix}, 
 \]
so that
\begin{equation}
\begin{array}{ll}\label{60}

T^{-1}=\begin{pmatrix}
a_{12} & 0 &0&0\\
-i\omega_1^2 & i&0&0\\
0&0& \omega_2 & \omega_2 \\
0&0&\eps\tau &-\eps\tau
\end{pmatrix},
\qquad
T \Ac T^{-1} = \begin{pmatrix}
-i\omega_1^2 & i &0&0\\
-i\omega_1^4-i\omega_1^2 & i\omega_1^2 &0&0\\
0&0& -\omega_2 & 0\\
0&0&0& \omega_2
\end{pmatrix}\,.
\end{array}
\end{equation}
Notice that we have modified only the first two columns of $T$, and consequently of $T^{-1}$. We can proceed similarly if we assume that~\eqref{eq:null1} holds, setting
\[ T=\begin{pmatrix}
-i & a_{12} &0&0\\
0 & a_{21}^{-1} &0&0\\
0&0& (2\omega_2)^{-1}& (2\eps\tau)^{-1}\\
0&0& (2\omega_2)^{-1}& -(2\eps\tau)^{-1}
\end{pmatrix},
 \]
so that
%
\[ 
T^{-1}=\begin{pmatrix}
i & -i\omega_1^2 &0&0 \\
0 & a_{21} &0&0\\
0&0& \omega_2 & \omega_2 \\
0&0&\eps\tau &-\eps\tau
\end{pmatrix},
\qquad 
T \Ac T^{-1} = \begin{pmatrix}
i\omega_1^2 & -i\omega_1^4-i\omega_1^2 &0&0\\
i &- i\omega_1^2 &0&0\\
0&0& -\omega_2 & 0\\
0&0&0& \omega_2
\end{pmatrix}\,.\]
%
%
Moreover, if~\eqref{eq:null3} holds, then we follow the same reasoning by defining
\[ T=\begin{pmatrix}
(2\mu\chi a_{12})^{-1}& -(2\mu\chi \omega_{1})^{-1}&0&0\\
(2\mu\chi a_{12})^{-1}& (2\mu\chi \omega_{1})^{-1}&0&0\\
0&0&-i & a_{34} \\
0&0&0 & a_{43}^{-1} \\
\end{pmatrix}, 
 \]
so that
%
%
%
\[ 
 T^{-1}=\begin{pmatrix}
\mu\chi a_{12} & \mu\chi a_{12} &0&0\\
-\mu\chi \omega_{1} & \mu\chi \omega_{1} &0&0\\
0&0& i & -i\omega_2^2 \\
0&0&0 & a_{43} \\
\end{pmatrix},\qquad
T \Ac T^{-1} = \begin{pmatrix}
-\omega_1 &0&0&0\\
0& \omega_1 &0&0\\
0&0 & i\omega_2^2 & -i\omega_2^4-i\omega_2^2 \\
0&0 & i &- i\omega_2^2 \\
\end{pmatrix}\,.\]
In this last case we have modified only the last two columns of $T$. 

From Proposition~\ref{prop:derivgamma}, ~$\p_\gamma (\omega_j^2) \in i\R\setminus\{0\}$ and  it trivially follows that~$\p_\gamma (\omega_j^4+\omega_j^2) \in i\R\setminus\{0\}$ as well. Moreover, we recall that $\omega_1=0$ or $\omega_2=0$ implies $\gamma=0$.
%
%

In the following, we assume that~$P_0=(\tau_0,\eta_0)$ verifies~\eqref{eq:null2}, in particular~$\gamma_0\doteq\Re\tau_0=0$, being the other cases analogous.
We look for a symmetrizer $r$ under the form
\[ r(\tau,\eta)=\begin{pmatrix}
s(\tau,\eta) & 0 & 0\\
 0 & -1 & 0\\
 0 & 0 & K'
\end{pmatrix}, \]
where $K'\ge1$ is a real number, and $s$ is some $2\times 2$ hermitian matrix, smoothly depending on $(\tau,\eta)$.
Let us focus on the first block of $T \Ac T^{-1}$ in \eqref{60},  
\[ a' \doteq\begin{pmatrix}
-i\omega_1^2 & i \\
-i\omega_1^4-i\omega_1^2 & i\omega_1^2
 \end{pmatrix}, \]
as we may symmetrize the second block as we did previously. Recall that~$a'$ is purely imaginary when~$\gamma=0$. Let
\[ s(\tau,\eta)=\underbrace{\begin{pmatrix}
0 & \epsilon_1 \\
\epsilon_1 & \epsilon_2 \end{pmatrix}}_E + \underbrace{\begin{pmatrix}
f(\tau,\eta) & 0 \\
0 & 0 \end{pmatrix}}_{F(\tau,\eta)} -i\gamma \underbrace{\begin{pmatrix}
0 & -g \\
g & 0 \end{pmatrix}}_G \,, \]
for suitable real numbers~$\epsilon_1, \epsilon_2, g$ and $f$ real valued $C^\infty$ function such that~$f(\tau_0,\eta_0)=0$. By virtue of Proposition~\ref{prop:derivgamma}, we may fix
\begin{equation}\label{eq:e1}
\epsilon_1= i (\partial_\gamma (\omega_1^2)(\tau_0,\eta_0))^{-1} \in \R\setminus\{0\}\,,
\end{equation}
%
%
%
%
The form of $s$ yields ($\gamma_0=0$)
\[ r(\tau_0,\eta_0)=\begin{pmatrix}
0 & \epsilon_1 &0&0\\
\epsilon_1 & \epsilon_2  & 0 & 0\\
0& 0 & -1 & 0\\
0& 0 & 0 & K'
\end{pmatrix}. \]
Next, we notice that the third column of $T^{-1}$ in \eqref{60} is simply $e_2$ and that the first one evaluated at $(\tau_0,\eta_0)$ (where $\omega_1=0$) equals $(a_{12},0,0,0)\tm$ with $a_{12}(\tau_0,\eta_0)\not=0$. On the other hand, we have $e_1(\tau_0,\eta_0)=(\mu\chi a_{12},0,0,0)\tm$
 with $\mu\chi a_{12}(\tau_0,\eta_0)\not=0$. It follows that the two vectors are parallel. Thanks to the Lopatinski\u{\i} condition, we can find a constant $\kappa_0$ such that
 \[
\left| \beta(\tau_0,\eta_0)\left( (a_{12}(\tau_0,\eta_0),0,0,0)\tm, e_2(\tau_0,\eta_0)  \right)Z'\right|^2\ge \kappa_0|Z'|^2, \qquad \forall Z'\in\C^2,
\]
and consequently there exists a constant $C_0$ such that
\[
|Z_1|^2+|Z_3|^2 \le C_0(|Z_2|^2+|Z_4|^2+|\tilde\beta(\tau_0,\eta_0) Z|^2),
\]
for all $Z\in \C^4$. Then
\begin{align*}
&\langle r(\tau_0,\eta_0)Z, Z\rangle_{\C^4} + C'C_0|\tilde\beta(\tau_0,\eta_0) Z|^2\\
&=2\epsilon_1\Re\langle Z_1, Z_2\rangle_{\C^4} +\epsilon_2 |Z_2|^2  - |Z_3|^2 + K'|Z_4|^2 + C'C_0|\tilde\beta(\tau_0,\eta_0) Z|^2\\
 &\ge (C'-\max\{|\epsilon_1|,1\} ) (|Z_1|^2 + |Z_3|^2) + (\epsilon_2-|\epsilon_1|-C'C_0)|Z_2|^2+ (K'-C'C_0)|Z_4|^2.
\end{align*}
We choose $C'=\max\{|\epsilon_1|,1\}+2,\, \epsilon_2=|\epsilon_1|+C'C_0+2,\, K'=C'C_02+2$, and obtain
\[ r(\tau_0,\eta_0) + C'C_0 \tilde{\beta}^*(\tau_0,\eta_0) \tilde{\beta}(\tau_0,\eta_0) \geq 2\Id. \]
Up to shrinking $\Vc$ we have thus derived the estimate
\[ r(\tau,\eta) + C \tilde{\beta}(\tau,\eta)^* \tilde{\beta}(\tau,\eta) \geq \Id, \qquad \forall(\tau,\eta)\in \Vc,\]
for a suitable constant $C$.

By Taylor's formula at~$(i\delta,\eta)$ with respect to the variable~$\gamma$, we may write:
\[ a'(\tau,\eta) = a'(i\delta,\eta) + \gamma \p_\gamma a'(i\delta,\eta) + \gamma^2 M(\tau,\eta), \]
for a suitable continuous function $M$.
Recalling that $a'(i\delta,\eta)$ is purely imaginary, we may choose~$f\doteq  -2\epsilon_1 \omega_1^2 - \epsilon_2(\omega_1^4+\omega_1^2)$, so that the matrix
\[ -i(E+F(\tau,\eta))a'(i\delta,\eta)   =
\begin{pmatrix}
 -f\omega_1^2 -\epsilon_1 (\omega_1^4+\omega_1^2) & f+\epsilon_1\omega_1^2  \\
 -\epsilon_1 \omega_1^2 - \epsilon_2(\omega_1^4+\omega_1^2) & \epsilon_1 +\epsilon_2\omega_1^2
\end{pmatrix} \]
is real and symmetric for all $(\tau,\eta)$. Therefore, being $\p_\gamma a'(i\delta,\eta)$ real from Proposition~\ref{prop:derivgamma}, we get
\[ \Re (s(\tau,\eta)a'(\tau,\eta)) = \gamma \Re (-iGa' +(E+F)\p_\gamma a') + \gamma^2 \Re (sM). \]
Setting
\[ N \doteq \begin{pmatrix}
0 & 1 \\
0 & 0
\end{pmatrix}, \qquad N_1 := a'(i\delta,\eta) - a'(\tau_0,\eta_0)\,, \]
so that~$a'(i\delta,\eta)=iN+N_1$, we may write
\[ \Re (s(\tau,\eta)a'(\tau,\eta)) = \gamma (GN +E\p_\gamma a'(i\delta,\eta)+L(\tau,\eta)), \]
where~$L(\tau_0,\eta_0)=0$ (notice that $N_1, F, \gamma M$ are included in $L$ since they vanish at $(\tau_0,\eta_0)=(i\delta_0,\eta_0)$). Having in mind~\eqref{eq:e1}, we obtain
\[ GN +E\p_\gamma a'(i\delta_0,\eta_0)=\begin{pmatrix}
0 & 0 \\
0 & g \end{pmatrix} + \begin{pmatrix}
1 & -1 \\
1+\epsilon_2/\epsilon_1  & -\epsilon_2/\epsilon_1
\end{pmatrix}. \]
For~$g$ sufficiently large, it is not restrictive to assume that
\[ \Re (s(\tau,\eta)a'(\tau,\eta))\geq \frac12 \gamma \Id, \qquad \text{so that}\quad\Re(r(\tau,\eta)T(\tau,\eta)\mathcal{A}(\tau,\eta)T(\tau,\eta)^{-1})\geq \kappa\gamma\Id,\]
in~$\Vc$, for a suitable constant $\kappa$. Summarizing, we obtained~\eqref{eq:betaint} and~\eqref{eq:symmint}.

\bigskip


\subsection{Pole $\mu=0$}\label{polomu}

We consider a point $P_0=(\tau_0, \eta_0)\in\Sigma$ with $\tau_0+i \base v\eta_0=\mu_0=0$. This is a simple pole for the plasma part of $\Ac(P_0)$, specifically for $a_{12}$. In this point the other coefficient $a_{21}$ of the plasma block vanishes and $\omega_1=|\eta_0|\not=0$. Moreover, $P_0$ is a point where the Lopatinski\u{\i} condition doesn't hold because the Lopatinski\u{\i} determinant vanishes. We fix a small neighborhood $\Vc$ of $P_0$, not containing any other critical point. From \eqref{omega1}, \eqref{omega2^2} we can assume that both eigenvalues $\omega_1,\omega_2$ are bounded and different from zero in $\Vc$; the inequality for $\omega_2$ is true provided $\eps$ is taken sufficiently small.

Inspired by \cite{nappes,WYuan}, we use a different approach to derive the energy estimate by converting problem \eqref{symbolic-equ} into a system of simple form. Let us consider the new variables 

\begin{equation}
\begin{array}{ll}\label{defLambda}
\ds
W
= \Lambda V^{\nc}\doteq
\begin{pmatrix}
1 & 1&0&0\\
-1  & 1&0&0\\
0&0&1&0\\
0&0&0&1
\end{pmatrix}
V^{\nc}.
\end{array}
\end{equation}
It follows from \eqref{symbolic-equ} that
\begin{equation}
\begin{array}{ll}\label{64}
 \d1 \fou{W}=\Lambda\,\Ac\,\Lambda^{-1} \fou{W}= \begin{pmatrix}
m_1 & -m_2 &0&0\\
m_2 & -m_1 &0&0\\
0&0&0& a_{34}\\
0&0& a_{43} &0
\end{pmatrix} \fou{W},
\end{array}
\end{equation}
where we have set
\begin{equation}
\begin{array}{ll}\label{defm1m2}

m_1= \dfrac{ a_{21}+a_{12}}{2} \,,
\qquad
m_2= \dfrac{ a_{21}-a_{12}}{2} \,.

\end{array}
\end{equation}
Notice that both coefficients $m_1$ and $m_2$ have a pole in $P_0$. We also remark that $\omega_1^2=m_1^2-m_2^2$.
The reader may recognize in the plasma block of \eqref{64} the form of the symbol in \cite{nappes}, see (4.12).

We define a new matrix~$T$, invertible on~$\Vc$, in the following way:
\begin{equation}
\begin{array}{ll}\label{defTmu}

T^{-1}=\begin{pmatrix}
\tilde{T}^{-1} &0&0\\
0& \omega_2 & \omega_2 \\
0&\eps\tau &-\eps\tau
\end{pmatrix},
\end{array}
\end{equation}

where
\[
\tilde{T}^{-1}=\begin{pmatrix}
\mu (m_1-\omega_1) &  -\mu m_2\\
 \mu m_2 & \mu (m_1-\omega_1)
\end{pmatrix}.
\]
We have finite
\[
\det\, \tilde{T}^{-1}=\mu^2(m_1-\omega_1)^2 + \mu^2m_2^2 = 2m_1\mu^2 (m_1-\omega_1)\not=0 \qquad \forall(\tau,\eta)\in\Vc,
\]
and
\[ T=\begin{pmatrix}
\tilde{T}&0&0 \\
0& (2\omega_2)^{-1}& (2\eps\tau)^{-1}\\
0& (2\omega_2)^{-1}& -(2\eps\tau)^{-1}
\end{pmatrix}, 
 \]
where
\[ \tilde{T}=\dfrac1{\det\, \tilde T^{-1}}\begin{pmatrix}
{\mu (m_1-\omega_1)} &  {\mu m_2 } \\
-  {\mu m_2} & {\mu (m_1-\omega_1)}
\end{pmatrix}.
 \]
The matrix $T$ is such that
\begin{equation}
\label{65}
T \begin{pmatrix}
m_1 & -m_2 &0&0\\
m_2 & -m_1 &0&0\\
0&0&0& a_{34}\\
0&0& a_{43} &0
\end{pmatrix} T^{-1} = \Ac' \doteq \begin{pmatrix}
-\omega_1 & -2m_2 &0&0\\
0 & \omega_1 &0&0\\
0&0& -\omega_2 & 0\\
0&0&0& \omega_2
\end{pmatrix}
\,, \qquad\forall (\tau,\eta)\in\Vc.
\end{equation}
We shall derive the energy estimates directly by making use of the ODE system derived by the above transformation, instead of constructing the symmetrizer of this problem. This will be done in the Subsection \ref{third}.

\bigskip


\subsection{Pole of \eqref{eq:caso2}}\label{polocaso2}

We consider a point $P_0=(\tau_0, \eta_0)\in\Sigma$ with $\gamma_0=0$ and $\delta_0+\base v \eta_0 = + \frac{|\base H|}{\sqrt{\base\rho(1+\base\alpha\base H^2)}}\eta_0$, with $\eta_0> 0$. (We shall not detail the case $\gamma_0=0$, $\delta_0+\base v \eta_0 = - \frac{|\base H|}{\sqrt{\base\rho(1+\base\alpha\base H^2)}}\eta_0$, that is entirely similar.)

This is a simple pole for the plasma part of $\Ac(P_0)$, specifically for $a_{21}$, and the eigenvalues $\pm\omega_1$ have a simple pole as well. In this point the other coefficient $a_{12}$ of the plasma block is well defined and different from zero. In $P_0$ the quantity $\chi$ vanishes, and $\chi\omega_1$ is well defined and different from zero because
\[
\chi^2\omega_1^2(P_0)=\eta_0^2/(1+\base\alpha\base H^2)>0.
\]
Moreover, at $P_0$ the Lopatinski\u{\i} condition is satisfied. We fix a small neighborhood $\Vc$ of $P_0$, not containing any other critical point, where we assume that $a_{12}\not=0$ and $\chi\omega_1\not=0$.

We deal with this case by using an argument inspired by the work of Majda and Osher \cite{majda-osher}, see also \cite{WYu}. 

Let us set
\[
\ttau = \gamma +i\tdelta \doteq\mu -i\frac{|\base H|}{\sqrt{\base\rho(1+\base\alpha\base H^2)}}\eta ,
\]
which means that $P_0$ corresponds to $\ttau=0$.
Let us consider the variables $W$ defined in \eqref{defLambda}, with the ODE \eqref{64}.

Recalling that $m_1-m_2=a_{12}\not=0$, we define a new matrix~$T$, invertible on~$\Vc$, in the following way:
\begin{equation}
\begin{array}{ll}\label{defT-1}
T^{-1}=\begin{pmatrix}
1 &i(m_1-m_2)^{-1} &0&0\\
-1 &i(m_1-m_2)^{-1} &0&0\\
0& 0&\omega_2 & \omega_2 \\
0&0& \eps\tau &-\eps\tau
\end{pmatrix}.

\end{array}
\end{equation}
The matrix $T$ is such that
\begin{equation}
\label{65'}
T \begin{pmatrix}
m_1 & -m_2 &0&0\\
m_2 & -m_1 &0&0\\
0&0&0& a_{34}\\
0&0& a_{43} &0
\end{pmatrix} T^{-1} = \Ac'' \doteq \begin{pmatrix}
0 & i &0&0\\
-i\omega_1^2 &0&0&0\\
0&0& -\omega_2 & 0\\
0&0&0& \omega_2
\end{pmatrix}
\,, \qquad\forall (\tau,\eta)\in\Vc.
\end{equation}
Denote by 
\[
\omega_0(\tau,\eta)=-\omega_1^2 \ttau=ie_0(i\delta,\eta)+\gamma d_0(\tau,\eta)
\]
with $e_0(i\delta,\eta)\in\R$. By a direct computation and \eqref{omega1} it follows that
\[
\omega_0(\tau_0,\eta_0)= i\frac{|\base H|}{\sqrt{\base\rho}}\frac{\base\alpha\base H^2}{2(1+\base\alpha\base H^2)^{5/2}}\eta_0^3,
\]
which implies $e_0(i\delta,\eta)>0$, because $\eta_0>0$. As in \cite{WYu} we look for a symmetrizer $r$ under the form
\[ r(\tau,\eta)=\begin{pmatrix}
\tilde r(\tau,\eta) & 0 & 0\\
 0 & -1 & 0\\
 0 & 0 & K'
\end{pmatrix}, \]
where $K'\ge2$ is a real number, and $\tilde r$ is a $2\times 2$ hermitian matrix
\[ \tilde r(\tau,\eta)=\begin{pmatrix}
d_1 & d_2+i\gamma s\\
 d_2-i\gamma s & d_1\dfrac{\tdelta}{e_0}
\end{pmatrix}, \]
with $d_1>0, d_2<0$ and $s>0$ constants to be determined later. We easily obtain
\begin{equation}
\begin{array}{ll}\label{74}
\Re \left( r(\tau,\eta)\Ac''(\tau,\eta) \right) = \Re \begin{pmatrix}
(d_2+i\gamma s)\dfrac{i\omega_0}{\ttau} & id_1 & 0 & 0\\
i d_1\dfrac{\tdelta\omega_0}{e_0\ttau} &\gamma s +id_2&0&0\\
 0&0 & \omega_2 & 0\\
0& 0 & 0 & K'\omega_2
\end{pmatrix}
\\
=  \begin{pmatrix}
R(\tau,\eta) & \bar{J}(\tau,\eta) & 0 & 0\\
J(\tau,\eta) &\gamma s  &0&0\\
 0&0 & \Re\omega_2 & 0\\
0& 0 & 0 & K' \Re\omega_2
\end{pmatrix},

\end{array}
\end{equation}
where
\[
R(\tau,\eta)=\dfrac12 \left(  (d_2+i\gamma s)\dfrac{i\omega_0}{\ttau} + (d_2- i\gamma s)\dfrac{\overline{i\omega_0}}{\overline\ttau} \right), \qquad
J(\tau,\eta)= \dfrac{i d_1}2 \left(  \dfrac{\tdelta\omega_0}{e_0\ttau} -1 \right) .
\]
By calculation we get
\begin{equation*}
\begin{array}{ll}\label{}
R(\tau,\eta)=\dfrac1{2|\ttau|^2} \left\{  (d_2+i\gamma s)(-e_0+i\gamma d_0)(\gamma -i\tdelta) + (d_2- i\gamma s)(-e_0- i\gamma \overline{d_0})(\gamma + i\tdelta)
\right\}
\\
=\dfrac1{2|\ttau|^2} \left\{  d_2(-e_0 - \gamma \Im d_0 + \tdelta \Re d_0) - s( \gamma^2 \Re{d_0} +\tdelta e_0 + \gamma \tdelta \Im d_0)
\right\}.
\end{array}
\end{equation*}
By observing that in $\Vc$, $e_0(i\delta,\eta)>0$, $\gamma$ and $\tdelta$ are very small and $\Re d_0, \Im d_0$ are bounded, we easily obtain the following inequalities
\[
R(\tau,\eta) \ge \dfrac{\gamma}{|\ttau|^2} \left(- \dfrac{e_0}2 d_2  - C_1|\ttau| s \right), \qquad | J (\tau,\eta)| \le \dfrac{d_1\gamma}{|\ttau|}
\,, \qquad\forall (\tau,\eta)\in\Vc,
\]
for a suitable positive constant $C_1$. It follows that for all $Z=(Z_1,Z_2)\tm \in \C^2$,
\begin{equation}
\begin{array}{ll}\label{75}
\overline{Z}\tm  \begin{pmatrix}
R(\tau,\eta) & \bar{J}(\tau,\eta) \\
J(\tau,\eta) &\gamma s  
\end{pmatrix} Z = R(\tau,\eta) |Z_1|^2 +\gamma s |Z_2|^2 + 2\Re( J(\tau,\eta) Z_1\overline Z_2)
\\
\quad \ge R(\tau,\eta) |Z_1|^2 +\gamma s |Z_2|^2 - 2\dfrac{d_1\gamma}{|\ttau|}| Z_1| | Z_2|
\\
\quad \ge R(\tau,\eta) |Z_1|^2 +\gamma s |Z_2|^2 - \left(\dfrac{\epsilon\gamma}{|\ttau|^2}| Z_1|^2 +\dfrac{\gamma d_1^2}{\epsilon} | Z_2|^2 \right)
\\
\quad \ge \left(- \dfrac{e_0}2 d_2  - C_1|\ttau| s - \epsilon\right) \dfrac{\gamma}{|\ttau|^2}|Z_1|^2 +\gamma  \left(s -\dfrac{d_1^2}{\epsilon} \right)| Z_2|^2 ,
\end{array}
\end{equation}
for $\epsilon>0$ to be determined later.


Now we consider the matrix $\Lambda^{-1}T^{-1}$, for $\Lambda$ defined in \eqref{defLambda} and $T$ as in \eqref{defT-1}. We compute
\[
\Lambda^{-1}T^{-1}(\tau_0,\eta_0)=
\begin{pmatrix}
1 &0 &0&0\\
0 &i a_{12}(\tau_0,\eta_0)^{-1}  &0&0\\
0& 0&\omega_2(\tau_0,\eta_0) & \omega_2(\tau_0,\eta_0) \\
0&0& \eps\tau_0 &-\eps\tau_0
\end{pmatrix},
\]
where we notice that the third column of the matrix is the eigenvector $e_2(\tau_0,\eta_0)$. At $P_0$ we have
\[
e_1(\tau_0,\eta_0)=- (0,\mu\chi\omega_1(\tau_0,\eta_0),0,0)\tm\not=0  ,
\]
which is therefore parallel to the second column of the above matrix.
Since at $P_0$ the Lopatinski\u{\i} condition is satisfied, it follows that there exists a constant $C_0$ such that
%
%
\begin{equation}
\begin{array}{ll}\label{stimaZ}

|(Z_2,Z_3)|^2 \le C_0(|(Z_1,Z_4)|^2+|\beta(\tau_0,\eta_0)\Lambda^{-1}T^{-1}(\tau_0,\eta_0) Z|^2)

\end{array}
\end{equation}
for all $Z=(Z_1,Z_2,Z_3,Z_4)\tm \in \C^4$. Moreover, recalling that at $P_0$ it holds $\gamma=\tilde\delta=0$,  we have
\begin{equation*}
\begin{array}{ll}\label{}
\langle r(\tau_0,\eta_0)Z,Z   \rangle_{\C^4} 
= d_1 |Z_1|^2  - |Z_3|^2  +K'  |Z_4|^2
+2  d_2 \Re(Z_1 \overline{Z_2}) 
\\
\ge  (d_1 +d_2 )|Z_1|^2 + d_2   |Z_2|^2 - |Z_3|^2  +K'  |Z_4|^2

.
\end{array}
\end{equation*}
It implies
\begin{equation*}
\begin{array}{ll}\label{}
\langle r(\tau_0,\eta_0)Z,Z   \rangle_{\C^4} -2d_2C_0|\beta(\tau_0,\eta_0)\Lambda^{-1}T^{-1}(\tau_0,\eta_0) Z|^2
\\
\ge  (d_1 +(1+2C_0)d_2  )|Z_1|^2   - d_2   |Z_2|^2 - (2d_2 +1 )|Z_3|^2  +(K' +2d_2C_0)  |Z_4|^2
.
\end{array}
\end{equation*}
Now, we choose $d_2<-2, d_1> - 2(1+C_0)d_2, \epsilon=- \dfrac{e_0}4 d_2, s> \dfrac{d_1^2}{\epsilon}$ and $K'$ large enough. We obtain
\begin{equation*}
\begin{array}{ll}\label{}
 r(\tau_0,\eta_0) +C (\beta(\tau_0,\eta_0)\Lambda^{-1}T^{-1}(\tau_0,\eta_0) )^\ast
\beta(\tau_0,\eta_0)\Lambda^{-1} T^{-1}(\tau_0,\eta_0)
\ge  2\Id\,,

\end{array}
\end{equation*}
for a suitable constant $C$.
Up to shrinking $\Vc$ we have thus derived the estimate
\begin{equation}
\begin{array}{ll}\label{estpolo22}
 r(\tau,\eta) +C \left( \beta(\tau,\eta)\Lambda^{-1}T^{-1}(\tau,\eta) \right)^\ast
\beta(\tau,\eta)\Lambda^{-1}T^{-1}(\tau,\eta)
\ge  \Id\,, \qquad\forall (\tau,\eta)\in\Vc.

\end{array}
\end{equation}
Moreover, from \eqref{65'}, \eqref{74}, \eqref{75} we have

\begin{equation}
\begin{array}{ll}\label{estpolo21}
\Re \left( r(\tau,\eta) T(\tau,\eta)
\Lambda\,\Ac(\tau,\eta)\,(T(\tau,\eta)\Lambda)^{-1} \right)\\
=\Re \left( r(\tau,\eta) T(\tau,\eta)
\Ac''(\tau,\eta) T(\tau,\eta)^{-1} \right)
\ge \kappa \begin{pmatrix}
\dfrac{\gamma}{|\ttau|^2} &0 &0&0\\
0 & \gamma &0&0\\
0& 0&\gamma & 0 \\
0&0& 0 &\gamma
\end{pmatrix}
\,, \qquad\forall (\tau,\eta)\in\Vc,

\end{array}
\end{equation}
for a positive constant $\kappa$.



\subsection{Pole $\tau=0$}\label{polotau}

We consider a point $P_0=(\tau_0, \eta_0)\in\Sigma$ with $\tau_0=0$. This is a simple pole for the vacuum part of $\Ac(P_0)$, specifically for $a_{34}$. In this point the other coefficient $a_{43}$ of the vacuum block vanishes and $\omega_2=|\eta_0|\not=0$. Moreover, $P_0$ is a point where the Lopatinski\u{\i} condition doesn't hold because the Lopatinski\u{\i} determinant vanishes. We fix a small neighborhood $\Vc$ of $P_0$, not containing any other critical point. From \eqref{omega1}, \eqref{omega2^2} we can assume that both eigenvalues $\omega_1,\omega_2$ are bounded and different from zero in $\Vc$.

We use the same approach of Subsection \ref{polomu} to derive the energy estimate by converting problem \eqref{symbolic-equ} into a system of simple form. Let us consider the new variables 

\begin{equation}
\begin{array}{ll}\label{defLambda'}
\ds
W
= \Lambda' V^{\nc}\doteq
\begin{pmatrix}
1 & 0&0&0\\
0  & 1&0&0\\
0&0&1&1\\
0&0&-1&1
\end{pmatrix}
V^{\nc}.
\end{array}
\end{equation}
It follows from \eqref{symbolic-equ} that
\begin{equation}
\begin{array}{ll}\label{64'}
 \d1 \fou{W}=\begin{pmatrix}
0 & a_{12} &0&0\\
a_{21} & 0 &0&0\\
0&0&n_1& -n_2\\
0&0& n_2 &-n_1
\end{pmatrix} \fou{W},
\end{array}
\end{equation}
where we have set
\begin{equation}
\begin{array}{ll}\label{defn1n2}

n_1= \dfrac{ a_{43}+a_{34}}{2} \,,
\qquad
n_2= \dfrac{ a_{43}-a_{34}}{2} \,.

\end{array}
\end{equation}
Notice that both coefficients $n_1$ and $n_2$ have a pole in $P_0$. We also remark that $\omega_2^2=n_1^2-n_2^2$.
We define a new matrix~$T$, invertible on~$\Vc$, in the following way:
\[
T^{-1}=\begin{pmatrix}
\mu\chi a_{12} & \mu\chi a_{12} &0\\
-\mu\chi \omega_{1} & \mu\chi \omega_{1} &0\\
0& 0 &\tilde{T}^{-1}
\end{pmatrix},
\]
where
\[
\tilde{T}^{-1}=\begin{pmatrix}
\tau(n_1-\omega_2) &  -  \tau n_2\\
\tau  n_2 & \tau(n_1-\omega_2)
\end{pmatrix}.
\]
We have
\[
\det\, \tilde{T}^{-1}= \tau^2 (n_1-\omega_2)^2 + \tau^2 n_2^2 = 2\tau^2 n_1 (n_1-\omega_2)\not=0 \qquad \forall(\tau,\eta)\in\Vc,
\]
and
\[ T=\begin{pmatrix}
(2\mu\chi a_{12})^{-1}& -(2\mu\chi \omega_{1})^{-1}&0\\
(2\mu\chi a_{12})^{-1}& (2\mu\chi \omega_{1})^{-1}&0\\
0& 0& \tilde{T}
\end{pmatrix}, 
 \]
where
\[ \tilde{T}=\dfrac1{\det\, \tilde T^{-1}}\begin{pmatrix}
\tau( n_1-\omega_2) &  \tau{n_2 } \\
-  \tau{n_2} & \tau( n_1-\omega_2)
\end{pmatrix}.
 \]
The matrix $T$ is such that
\begin{equation}
\label{65''}
T \begin{pmatrix}
0 & a_{12} &0&0\\
a_{21} & 0 &0&0\\
0&0&n_1& -n_2\\
0&0& n_2 &-n_1
\end{pmatrix} T^{-1} = \Ac'''\doteq \begin{pmatrix}
-\omega_1 & 0 &0&0\\
0 & \omega_1 &0&0\\
0&0& -\omega_2 & -2n_2\\
0&0&0& \omega_2
\end{pmatrix}
\,, \qquad\forall (\tau,\eta)\in\Vc.
\end{equation}
As for the case $\mu=0$, we shall derive the energy estimates directly by making use of the ODE system derived by the above transformation, instead of constructing the symmetrizer of this problem. This will be done in the Subsection \ref{fifth}.

\bigskip

\section{Energy estimate}\label{Stima dell'energia}

We now turn to the derivation of the estimate \eqref{stimaV2bdry}. After the reduction of the problem to the homogeneous equation and the elimination of the front, recall that we are considering a function $V\in H^1(\Omega)$ such that
\[ \begin{cases}
(\tau \Ac_0 + i\eta\Ac_2 + \Ac_1\p_1)\fou V=0 & x_1>0, \\
\beta(\tau,\eta) \fou V^\nc = \fou h & x_1=0,
\end{cases}\]
where
\[
\fou h=Q(\tau,\eta) \fou g=\begin{pmatrix}
0 & 1 & 0 \\
-\eps\base \Hc \tau & 0 & \tau + i \base v \eta \\
\bar\tau - i \base v \eta & 0 & \eps\base \Hc \bar \tau
\end{pmatrix}\fou g, \qquad \forall (\tau,\eta)\in\Sigma.
\] 

The previous analysis shows that for all $(\tau_0,\eta_0)\in\Sigma$, there exists a neighborhood $\Vc$ of $(\tau_0,\eta_0)$ and mappings defined on this neighborhood that satisfy suitable properties. Because $\Sigma$ is a $C^\infty$ compact manifold, there exists a finite covering $(\Vc_1,\dots,\Vc_I)$ of $\Sigma$ by such neighborhoods, and a smooth partition of unity $(\chi_1,\dots,\chi_I)$ associated with this covering.
The $\chi_i's$ are nonnegative $C^\infty$ functions with
\[
\supp\chi_i\subset\Vc_i, \qquad \sum_{i=1}^I\chi_i^2=1.
\]
We consider the different cases.

\subsubsection{The first case.}\label{first}
$\Vc_i$ is a neighborhood of an interior point or a neighborhood of a boundary point corresponding to cases in Subsections \ref{ssec:boundaryLop} or \ref{ssec:nodiag}, that is boundary points that are not poles and such that the Lopatinski\u{\i} condition is satisfied.

On such a neighborhood there exist two $C^\infty$ mappings~$r_i$ e $T_i$ such that $r_i$ is hermitian, $T_i$ has values in $GL_4(\C)$, and the following estimates hold for all $ (\tau,\eta)\in\Vc_i$:
\begin{subequations}\label{symmint}
\begin{align}
\Re(r_i(\tau,\eta)T_i(\tau,\eta)\Ac(\tau,\eta) T_i(\tau,\eta)^{-1})\geq \kappa_i\gamma \Id\,, \label{symminta}\\
r_i(\tau,\eta)+ C(\beta(\tau,\eta) T_i(\tau,\eta)^{-1})^* (\beta(\tau,\eta) T_i(\tau,\eta)^{-1})\geq \Id\, ,\label{symmintb}
\end{align}
\end{subequations}
see \eqref{eq:betaint}, \eqref{eq:symmint}.

We define
\[ U_i(\tau,x_1,\eta)\doteq \chi_i(\tau,\eta)\,T_i(\tau,\eta)\,\fou V^\nc(\tau,x_1,\eta). \]
Here $r_i$ and $T_i$ are only defined on $\Vc_i$, but for convenience we first extend the definition to the whole $\Sigma$, then extend~$\chi_i, r_i$ and $ T_i$ to the whole set of frequencies $\Xi$, as homogeneous mappings of degree 0 with respect to $(\tau,\eta)$. We easily show that $U_i$ satisfies
\[\begin{cases}
{\dfrac{dU_i}{dx_1}}  = T_i(\tau,\eta)\Ac(\tau,\eta) T_i(\tau,\eta)^{-1} U_i &\quad x_1>0,\\
\beta(\tau,\eta) T_i(\tau,\eta)^{-1} U_i = \chi_i\fou h &\quad x_1=0.
\end{cases}\]
We take the scalar product of the above ordinary differential equation with~$r_iU_i$ and integrate w.r.t. $x_1$ on~$[0,+\infty)$. Then we take the real part and use \eqref{symminta} to obtain
\[
\Re\int_0^{+\infty} \langle r_iU_i, {\dfrac{dU_i}{dx_1}} \rangle\, dx_1 = \Re\int_0^{+\infty} \langle U_i, r_iT_i\Ac T_i^{-1} U_i  \rangle\, dx_1 \ge \kappa_i\gamma\int_0^{+\infty}|U_i(\tau,x_1,\eta)|^2\, dx_1.
\]
On the other hand, from \eqref{symmintb} we get
\begin{equation*}
\begin{array}{ll}\label{}
\ds
\Re\int_0^{+\infty} \langle r_iU_i, {\dfrac{dU_i}{dx_1}} \rangle\, dx_1 = -\dfrac12 r_i|U_i(\tau,0,\eta)|^2
\le \dfrac{C}2|\beta T_i^{-1} U_i(\tau,0,\eta)|^2 - \dfrac12 |U_i(\tau,0,\eta)|^2.

\end{array}
\end{equation*}
This yields the classical Kreiss' estimate
\begin{equation}
\begin{array}{ll}\label{kreiss}
\ds \kappa_i\gamma\int_0^{+\infty}|U_i(\tau,x_1,\eta)|^2\, dx_1 + \dfrac12 |U_i(\tau,0,\eta)|^2 \le C_i\chi_i(\tau,\eta)^2|\fou h|^2.
\end{array}
\end{equation}
Now we use the definition of $U_i$ and a uniform bound for $\|T_i(\tau,\eta)^{-1}\|$ on the support of $\chi_i$ to derive
\begin{equation}
\begin{array}{ll}\label{stimafirst}
\ds \gamma\chi_i(\tau,\eta)^2\int_0^{+\infty} |\fou V^\nc(\tau,x_1,\eta)|^2dx_1 + \chi_i(\tau,\eta)^2 |\fou V^\nc(\tau,0,\eta)|^2 \leq C_i\chi_i(\tau,\eta)^2|\fou h|^2,
\end{array}
\end{equation}
for all $(\tau,\eta)\in\R^+\cdot \Vc_i$.

\subsubsection{The second case.}
$\Vc_i$ is a neighborhood of a boundary point which is a zero of the Lopatinski\u{\i} determinant but not a pole, see Subsection \ref{ssec:boundarynoLop}.

On such a neighborhood there exist two $C^\infty$ mappings~$r_i$ e $T_i$ such that $r_i$ is hermitian, $T_i$ has values in $GL_4(\C)$, and the following estimates hold for all $ (\tau,\eta)\in\Vc_i$:
\begin{subequations}\label{symmlop}
\begin{align}
\Re(r_i(\tau,\eta)T_i(\tau,\eta)\Ac(\tau,\eta) T_i(\tau,\eta)^{-1})\geq \kappa_i\gamma^3 \Id\,, \label{symmlopa}\\
r_i(\tau,\eta)+ C(\beta(\tau,\eta) T_i(\tau,\eta)^{-1})^* (\beta(\tau,\eta) T_i(\tau,\eta)^{-1})\geq \gamma^2\Id\, ,\label{symmlopb}
\end{align}
\end{subequations}
see \eqref{eq:symmLopa}, \eqref{eq:betaLopa}.
As done before, we first extend the definition of $r_i$ and $T_i$ to the whole hemisphere $\Sigma$. Then we extend~$\chi_i $ and $T_i$ to the whole set of frequencies $\Xi$, as homogeneous mappings of degree 0 with respect to $(\tau,\eta)$, and we extend~$r_i$ to the whole set of frequencies $\Xi$, as homogeneous mappings of degree 2 with respect to $(\tau,\eta)$. Thus \eqref{symmlop} reads
\begin{subequations}\label{symmlop2}
\begin{align}
\Re(r_i(\tau,\eta)T_i(\tau,\eta)\Ac(\tau,\eta) T_i(\tau,\eta)^{-1})\geq \kappa_i\gamma^3 \Id\,, \label{symmlop2a}\\
r_i(\tau,\eta)+ C(|\tau|^2+ \eta^2)(\beta(\tau,\eta) T_i(\tau,\eta)^{-1})^* (\beta(\tau,\eta) T_i(\tau,\eta)^{-1})\geq \gamma^2\Id\, ,\label{symmlop2b}
\end{align}
\end{subequations}
for all $ (\tau,\eta)\in\R^+\cdot \Vc_i$. 
We define
\[ U_i(\tau,x_1,\eta)\doteq \chi_i(\tau,\eta)\,T_i(\tau,\eta)\,\fou V^\nc(\tau,x_1,\eta). \]
Because $\Vc_i$ does not contain any pole we still have
\[\begin{cases}
{\dfrac{dU_i}{dx_1}}  = T_i(\tau,\eta)\Ac(\tau,\eta) T_i(\tau,\eta)^{-1} U_i &\quad x_1>0,\\
\beta(\tau,\eta) T_i(\tau,\eta)^{-1} U_i = \chi_i\fou h &\quad x_1=0,
\end{cases}\]
Performing the same calculations as above, with only \eqref{symmlop2} instead of \eqref{symmint}, yields
\begin{equation}
\begin{array}{ll}\label{stimasecond}
\ds \gamma\chi_i(\tau,\eta)^2\int_0^{+\infty} |\fou V^\nc(\tau,x_1,\eta)|^2dx_1 + \chi_i(\tau,\eta)^2 |\fou V^\nc(\tau,0,\eta)|^2 \leq \dfrac{C_i}{\gamma^2}(|\tau|^2+ \eta^2)\chi_i(\tau,\eta)^2|\fou h|^2,
\end{array}
\end{equation}
for all $(\tau,\eta)\in\R^+\cdot \Vc_i$.


\subsubsection{The third case.}\label{third}
$\Vc_i$ is a neighborhood of the boundary point $P_0=(\tau_0, \eta_0)$ with $\mu_0=\tau_0+i \base v\eta_0=0$. This is a pole for the plasma part of $\Ac$, specifically for $a_{12}$, and it is a a zero for the Lopatinski\u{\i} determinant. 

On such a neighborhood there exists a $C^\infty$ mapping~$T_i$ defined on $\Vc_i$ with values in $GL_4(\C)$, see \eqref{defTmu}, and such that for the matrix defined in \eqref{64}, the equation \eqref{65} is satisfied.
As done above, we first extend the definition of $T_i$ to the whole hemisphere $\Sigma$. Then we extend~$\chi_i $ and $T_i$ to the whole set of frequencies $\Xi$, as homogeneous mappings of degree 0 with respect to $(\tau,\eta)$.

We define
\[ U_i(\tau,x_1,\eta)\doteq \chi_i(\tau,\eta)\,T_i(\tau,\eta) \Lambda\,\fou V^\nc(\tau,x_1,\eta), 
\]
where the $4\times4$ constant matrix $\Lambda$ is defined in \eqref{defLambda}.
We deduce from \eqref{eq.trasfbdry}, \eqref{64}, \eqref{65} the following problem for $U_i$
\begin{equation}
\begin{cases}\label{66}
{\dfrac{dU_i}{dx_1}} =\Ac' (\tau,\eta) U_i \qquad& x_1>0,\\
 \beta(\tau,\eta) \Lambda^{-1}T(\tau,\eta)^{-1}U_i =\chi_i\fou{h}\qquad& x_1=0.
\end{cases}
\end{equation}
More specifically, the ODE system reads
\begin{subequations}
\label{systempolemu}
\begin{align}
&{\dfrac{dU_{i,1}}{dx_1}}  =-\omega_1U_{i,1}-2m_2U_{i,2},\label{67a}\\
&{\dfrac{dU_{i,2}}{dx_1}} =\omega_1U_{i,2}\label{67b},\\
&{\dfrac{dU_{i,3}}{dx_1}} =-\omega_2U_{i,3}\label{67c},\\
&{\dfrac{dU_{i,4}}{dx_1}} =\omega_2U_{i,4}.\label{67d}
\end{align}
\end{subequations}
Recall that $m_2$ is defined in \eqref{defm1m2} and that it has a pole at $P_0$. $\Vc_i$ is sufficiently small so that we may assume that 
\[
\Re\omega_1(\tau,\eta)\ge \kappa_i(|\tau|^2+ \eta^2)^{1/2}, \qquad \Re\omega_2(\tau,\eta)\ge \kappa_i(|\tau|^2+ \eta^2)^{1/2}, \qquad \forall  (\tau,\eta)\in \R^+\cdot\Vc_i,
\]
for a suitable constant $\kappa_i>0$. The above inequality for $\omega_1$ is obvious because $\omega_1(P_0)=|\eta_0|\not=0$. The inequality concerning $\omega_2$ is true provided $\epsilon$ is taken sufficiently small, as it follows from $\omega_2^2(\tau_0,\eta_0)=\eta_0^2(1-\epsilon^2\base v^2)$.

Since  $U_{i,2}(x_1)$ and $U_{i,4}(x_1)$ belong to $L^2(\R^+)$, from \eqref{67b} and \eqref{67d} we get $U_{i,2}\equiv0$ and $U_{i,4}\equiv0$. Hence, even if $m_2$ has a pole in $P_0$, the first equation \eqref{67a} is well defined and actually reads
\begin{equation}
\begin{array}{ll}\label{78}
{\dfrac{dU_{i,1}}{dx_1}}  =-\omega_1U_{i,1}.
\end{array}
\end{equation}
From \eqref{67c}, \eqref{78}  and the above properties of $\omega_1$ and $\omega_2$ we derive
\begin{equation}
\begin{array}{ll}\label{79}
\ds
(|\tau|^2+ \eta^2)^{1/2}\int_0^{+\infty}|U_{i,1}(\tau,x_1,\eta)|^2\, dx_1 \le C|U_{i,1}(\tau,0,\eta)|^2,
\\
\ds
(|\tau|^2+ \eta^2)^{1/2}\int_0^{+\infty}|U_{i,3}(\tau,x_1,\eta)|^2\, dx_1 \le C|U_{i,3}(\tau,0,\eta)|^2,

\end{array}
\end{equation}
for all $ (\tau,\eta)\in \R^+\cdot\Vc_i$.
On the other hand, the boundary condition in \eqref{66} reduces to
\begin{equation}
\begin{array}{ll}\label{80}
\begin{pmatrix}
\dfrac12\mu(m_1-\omega_1- m_2) & -\base\Hc\omega_2 \\

\dfrac12\epsilon\base\Hc\tau \mu(m_1-\omega_1+ m_2)  & \eps \tau\mu
\end{pmatrix}
\begin{pmatrix}
 U_{i,1} \\
 U_{i,3} 
\end{pmatrix}=\chi_i\fou h.
\end{array}
\end{equation}
Let us denote by $\Delta'(\tau,\eta)$ the determinant of the matrix in \eqref{80}. Substituting \eqref{defm1m2} in the calculation of this determinant gives
\[
\Delta'(\tau,\eta)\doteq \dfrac12\eps \tau\mu  \left( \mu a_{12}-\mu\omega_1 +\base\Hc^2\omega_2(a_{21}-\omega_1 ) \right) .
\]
If $\Vc_i$ is taken sufficiently small, it is easily verified that there exists a positive constant $C$ such that
\[
|\Delta'(\tau,\eta)|\ge C\gamma , \qquad \forall  (\tau,\eta)\in \Vc_i.
\]
Even if from its definition $\Delta'(\tau,\eta)$ appears as a homogeneous function of degree 4 with respect to $(\tau,\eta)$, actually it is a homogeneous function of degree 0, as follows from the extension of $T_i$ that we did to the whole set of frequencies $\Xi$. Therefore we have
\[
|\Delta'(\tau,\eta)|\ge C\gamma (|\tau|^2+ \eta^2)^{-1/2}, \qquad \forall  (\tau,\eta)\in \R^+\cdot\Vc_i,
\]
and from \eqref{80} it follows
\begin{equation}
\begin{array}{ll}\label{81}
|U_{i,1}(\tau,0,\eta)|+|U_{i,3}(\tau,0,\eta)| \le C \dfrac{(|\tau|^2+ \eta^2)^{1/2}}{\gamma}|\chi_i\fou h| .
\end{array}
\end{equation}
Now, combining \eqref{79}, \eqref{81} and using $(|\tau|^2+ \eta^2)^{1/2}\ge\gamma$ gives
\begin{multline*}
\ds
\gamma \int_0^{+\infty} \left( |U_{i,1}(\tau,x_1,\eta)|^2 +  |U_{i,3}(\tau,x_1,\eta)|^2 \right) dx_1 \\+ |U_{i,1}(\tau,0,\eta)|^2 + |U_{i,3}(\tau,0,\eta)|^2 
\le \dfrac{C_i }{\gamma^2}(|\tau|^2+ \eta^2)|\chi_i(\tau,\eta) \fou h|^2 
 ,
\end{multline*}
for all $ (\tau,\eta)\in \R^+\cdot\Vc_i$.
Finally, also recalling that $U_{i,2}\equiv0$ and $U_{i,4}\equiv0$, we obtain
\begin{equation}
\begin{array}{ll}\label{stimathird}
\ds \gamma\chi_i(\tau,\eta)^2\int_0^{+\infty} |\fou V^\nc(\tau,x_1,\eta)|^2dx_1 + \chi_i(\tau,\eta)^2 |\fou V^\nc(\tau,0,\eta)|^2 \leq \dfrac{C_i}{\gamma^2}(|\tau|^2+ \eta^2)\chi_i(\tau,\eta)^2|\fou h|^2, 
\end{array}
\end{equation}
for all $(\tau,\eta)\in\R^+\cdot \Vc_i$.


\subsubsection{The fourth case.}\label{fourth}
$\Vc_i$ is a neighborhood of the boundary point $P_0=(\tau_0, \eta_0)$ with $\gamma_0=0$ and $\delta_0+\base v \eta_0 = + \frac{|\base H|}{\sqrt{\base\rho(1+\base\alpha\base H^2)}}\eta_0$, with $\eta_0> 0$. (The case $\gamma_0=0$, $\delta_0+\base v \eta_0 = - \frac{|\base H|}{\sqrt{\base\rho(1+\base\alpha\base H^2)}}\eta_0$ can be studied similarly.) This is a pole for the plasma part of $\Ac(P_0)$, specifically for $a_{21}$, and the eigenvalues $\pm\omega_1$ have a pole as well. Moreover, at $P_0$ the Lopatinski\u{\i} condition is satisfied.

On such a neighborhood there exist two $C^\infty$ mappings~$r_i$ and $T_i$ (defined in \eqref{defT-1}) such that $r_i$ is hermitian, $T_i$ has values in $GL_4(\C)$, and such that the inequalities \eqref{estpolo22}, \eqref{estpolo21} are satisfied.

As done above, we first extend the definition of $r_i$ and $T_i$ to the whole hemisphere $\Sigma$. Then we extend~$\chi_i ,r_i$ and $T_i$ to the whole set of frequencies $\Xi$, as homogeneous mappings of degree 0 with respect to $(\tau,\eta)$. Consequently, $r_i$ and $T_i$ satisfy
\begin{equation*}
\begin{array}{ll}\label{estpolo212}
\Re \left( r_i(\tau,\eta) T_i(\tau,\eta)
\Lambda\,\Ac(\tau,\eta)\,(T_i(\tau,\eta)\Lambda)^{-1} \right)
\ge \kappa_i \begin{pmatrix}
\gamma\dfrac{|\tau|^2+ \eta^2}{|\ttau|^2} &0 &0&0\\
0 & \gamma &0&0\\
0& 0&\gamma & 0 \\
0&0& 0 &\gamma
\end{pmatrix}
\,, 

\end{array}
\end{equation*}
\begin{equation*}
\begin{array}{ll}\label{estpolo222}
 r_i(\tau,\eta) +C \left( \beta(\tau,\eta)\Lambda^{-1}T_i^{-1}(\tau,\eta) \right)^\ast
\beta(\tau,\eta)\Lambda^{-1}T_i^{-1}(\tau,\eta)
\ge  \Id\,, 

\end{array}
\end{equation*}
for all $(\tau,\eta)\in\R^+\cdot \Vc_i$. We define
\[ U_i(\tau,x_1,\eta)\doteq \chi_i(\tau,\eta)\,T_i(\tau,\eta) \Lambda\,\fou V^\nc(\tau,x_1,\eta). 
\]
Using the same argument as in {\it the first case}, see \ref{first}, we get the estimate
\begin{equation}
\begin{array}{ll}\label{stimafourth}
\ds \gamma\chi_i(\tau,\eta)^2\int_0^{+\infty} |\fou V^\nc(\tau,x_1,\eta)|^2dx_1 + \chi_i(\tau,\eta)^2 |\fou V^\nc(\tau,0,\eta)|^2 \leq C_i\chi_i(\tau,\eta)^2|\fou h|^2,
\end{array}
\end{equation}
for all $(\tau,\eta)\in\R^+\cdot \Vc_i$.


\subsubsection{The fifth case.}\label{fifth}
$\Vc_i$ is a neighborhood of the boundary point $P_0=(\tau_0, \eta_0)$ with $\tau_0=0$. This is a pole for the vacuum part of $\Ac$, specifically for $a_{34}$, and it is a zero for the Lopatinski\u{\i} determinant.

On such a neighborhood there exists a $C^\infty$ mapping~$T_i$ defined on $\Vc_i$ with values in $GL_4(\C)$, and such that for the matrix defined in \eqref{64'}, the equation \eqref{65''} is satisfied.
As done above, we first extend the definition of $T_i$ to the whole hemisphere $\Sigma$. Then we extend~$\chi_i $ and $T_i$ to the whole set of frequencies $\Xi$, as homogeneous mappings of degree 0 with respect to $(\tau,\eta)$.

We define
\[ U_i(\tau,x_1,\eta)\doteq \chi_i(\tau,\eta)\,T_i(\tau,\eta) \Lambda'\,\fou V^\nc(\tau,x_1,\eta), 
\]
where the $4\times4$ constant matrix $\Lambda'$ is defined in \eqref{defLambda'}.
We deduce from \eqref{eq.trasfbdry}, \eqref{64'}, \eqref{65''} the following problem for $U_i$
\begin{equation}
\begin{cases}\label{66'}
{\dfrac{dU_i}{dx_1}} =\Ac''' (\tau,\eta) U_i \qquad& x_1>0,\\
 \beta(\tau,\eta) \Lambda'^{-1}T(\tau,\eta)^{-1}U_i =\chi_i\fou{h}\qquad& x_1=0.
\end{cases}
\end{equation}
More specifically, the ODE system reads
\begin{subequations}
\label{systempoletau'}
\begin{align}
&{\dfrac{dU_{i,1}}{dx_1}}  =-\omega_1U_{i,1} ,\label{67a'}\\
&{\dfrac{dU_{i,2}}{dx_1}} =\omega_1U_{i,2}\label{67b'},\\
&{\dfrac{dU_{i,3}}{dx_1}} =-\omega_2U_{i,3}-2n_2U_{i,4}\label{67c'},\\
&{\dfrac{dU_{i,4}}{dx_1}} =\omega_2U_{i,4}.\label{67d'}
\end{align}
\end{subequations}
Recall that $n_2$ is defined in \eqref{defn1n2} and that it has a pole at $P_0$. $\Vc_i$ is sufficiently small so that we may assume that 
\[
\Re\omega_1(\tau,\eta)\ge \kappa_i\gamma, \qquad \Re\omega_2(\tau,\eta)\ge \kappa_i(|\tau|^2+ \eta^2)^{1/2}, \qquad \forall  (\tau,\eta)\in \R^+\cdot\Vc_i,
\]
for a suitable constant $\kappa_i>0$. The inequality concerning $\omega_1$ follows from Remark \ref{remark2}.
The above inequality for $\omega_2$ is obvious because $\omega_2(P_0)=|\eta_0|\not=0$. 

As in Subsection \ref{third} we show that $U_{i,2}\equiv0$ and $U_{i,4}\equiv0$, and \eqref{67c'} reduces to
\begin{equation}
\begin{array}{ll}\label{78'}
{\dfrac{dU_{i,3}}{dx_1}}  =-\omega_2U_{i,3}.
\end{array}
\end{equation}
From \eqref{67a'}, \eqref{78'}  and the above properties of $\omega_1$ and $\omega_2$ we derive
\begin{equation}
\begin{array}{ll}\label{79'}
\ds
\gamma \int_0^{+\infty}|U_{i,1}(\tau,x_1,\eta)|^2\, dx_1 \le C|U_{i,1}(\tau,0,\eta)|^2,
\\
\ds
(|\tau|^2+ \eta^2)^{1/2}\int_0^{+\infty}|U_{i,3}(\tau,x_1,\eta)|^2\, dx_1 \le C|U_{i,3}(\tau,0,\eta)|^2,

\end{array}
\end{equation}
for all $ (\tau,\eta)\in \R^+\cdot\Vc_i$.
On the other hand, the boundary condition in \eqref{66'} reduces to
\begin{equation}
\begin{array}{ll}\label{80'}
\begin{pmatrix}
\mu\chi a_{12} & -\dfrac12\base\Hc\tau (n_1-\omega_2- n_2)
\\
-\epsilon\base\Hc\tau \mu\chi \omega_1 & \dfrac12\mu\tau (n_1-\omega_2+ n_2)
\end{pmatrix}
\begin{pmatrix}
 U_{i,1} \\
 U_{i,3} 
\end{pmatrix}=\chi_i\fou h.
\end{array}
\end{equation}
Let us denote by $\Delta''(\tau,\eta)$ the determinant of the matrix in \eqref{80}. Substituting \eqref{defn1n2} in the calculation of this determinant gives
\[
\Delta''(\tau,\eta)\doteq \dfrac12\chi \tau\mu  \left( \mu a_{12}(a_{43}-\omega_2) -\eps\base\Hc^2\omega_1(\tau a_{34}-\tau\omega_2 ) \right) .
\]
If $\Vc_i$ is taken sufficiently small, it is easily verified that there exists a positive constant $C$ such that
\[
|\Delta''(\tau,\eta)|\ge C\gamma , \qquad \forall  (\tau,\eta)\in \Vc_i.
\]
Even if from its definition $\Delta''(\tau,\eta)$ appears as a homogeneous function of degree 5 with respect to $(\tau,\eta)$, actually it is a homogeneous function of degree 0, as follows from the extension of $T_i$ that we did to the whole set of frequencies $\Xi$. Therefore we have
\[
|\Delta''(\tau,\eta)|\ge C\gamma (|\tau|^2+ \eta^2)^{-1/2}, \qquad \forall  (\tau,\eta)\in \R^+\cdot\Vc_i,
\]
and from \eqref{80'} it follows
\begin{equation}
\begin{array}{ll}\label{81'}
|U_{i,1}(\tau,0,\eta)|+|U_{i,3}(\tau,0,\eta)| \le C \dfrac{(|\tau|^2+ \eta^2)^{1/2}}{\gamma}|\chi_i\fou h| .
\end{array}
\end{equation}
Now, combining \eqref{79'}, \eqref{81'} and using $(|\tau|^2+ \eta^2)^{1/2}\ge\gamma$ gives
\begin{equation*}
\begin{array}{ll}\label{}
\ds
\gamma \int_0^{+\infty} \left( |U_{i,1}(\tau,x_1,\eta)|^2 +  |U_{i,3}(\tau,x_1,\eta)|^2 \right) dx_1 + |U_{i,1}(\tau,0,\eta)|^2 + |U_{i,3}(\tau,0,\eta)|^2 \le \dfrac{C_i }{\gamma^2}(|\tau|^2+ \eta^2)|\chi_i(\tau,\eta) \fou h|^2 
 ,
\end{array}
\end{equation*}
for all $ (\tau,\eta)\in \R^+\cdot\Vc_i$.
Finally, also recalling that $U_{i,2}\equiv0$ and $U_{i,4}\equiv0$, we obtain
\begin{equation}
\begin{array}{ll}\label{stimafifth}
\ds \gamma\chi_i(\tau,\eta)^2\int_0^{+\infty} |\fou V^\nc(\tau,x_1,\eta)|^2dx_1 + \chi_i(\tau,\eta)^2 |\fou V^\nc(\tau,0,\eta)|^2 \leq \dfrac{C_i}{\gamma^2}(|\tau|^2+ \eta^2)\chi_i(\tau,\eta)^2|\fou h|^2,
\end{array}
\end{equation}
for all $(\tau,\eta)\in\R^+\cdot \Vc_i$.

\subsubsection{Proof of estimate \eqref{stimaV2bdry}}
Adding \eqref{stimafirst}, \eqref{stimasecond}, \eqref{stimathird}, \eqref{stimafourth}, \eqref{stimafifth}, and using the partition of unity gives
\begin{equation*}
\begin{array}{ll}\label{}
\ds \gamma\int_0^{+\infty} |\fou V^\nc(\tau,x_1,\eta)|^2dx_1 +  |\fou V^\nc(\tau,0,\eta)|^2 \leq \dfrac{C_i}{\gamma^2}(|\tau|^2+ \eta^2)|\fou h|^2,
\end{array}
\end{equation*}
for all $(\tau,\eta)\in\Xi$. We integrate with respect to $(\delta,\eta)\in\R^2$ and obtain the estimate
\begin{equation*}
\begin{array}{ll}\label{}
\ds \gamma \|V^\nc\|^2_{L^2(\Omega)} + \|V^\nc_{x_1=0}\|_{L^2(\R^2)}^2\leq  \frac{C}{\gamma^2} \|g\|_{1,\gamma}^2
 \,,
\end{array}
\end{equation*}
which yields \eqref{stimaV2bdry}.


\appendix
\section{Proof of Lemma \ref{lemma.diffeo}}
\label{AppA}

The proof is similar to that one of Lemma 3 in \cite{SeTr}. We first proof the following result.
\begin{Lem}
\label{lemma1}

Let $m\ge 3$. For all $\eps>0$ there exists a continuous linear map $\varphi\in H^{m-0.5}(\R) \mapsto
\Psi \in H^m(\Omega^+)$ such that $\Psi(0,x_2)=\varphi(x_2)$, $\duno \Psi (0,x_2)=0$ on $\Gamma$, and
\begin{equation}
\begin{array}{ll}\label{diffeopiccolo}
\|\duno\Psi\|_{L^\infty(\Omega^+)}\le\eps \, \| \varphi\|_{H^{2}(\R)}.
\end{array}
\end{equation}

\end{Lem}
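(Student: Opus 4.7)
The plan is to construct $\Psi$ as a Fourier-based lifting in the tangential variable $x_2$, parametrized by a small positive scale $\lambda$ to be chosen at the end in terms of $\eps$. Fix once and for all a cutoff $\chi\in C_c^\infty([0,\infty))$ satisfying $\chi(0)=1$, $\chi'(0)=0$, and $\supp\chi\subset[0,R]$ for some $R>0$. Write $\langle\xi\rangle:=(1+\xi^2)^{1/2}$ and let $\fou{\,}$ denote the Fourier transform in $x_2$. Define
\[
\fou{\Psi}(x_1,\xi):=\chi\bigl(\lambda x_1\langle\xi\rangle\bigr)\,\fou{\varphi}(\xi),\qquad x_1>0,\ \xi\in\R,
\]
which yields a linear map $\varphi\mapsto\Psi$. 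The trace conditions are immediate by evaluation at $x_1=0$: $\chi(0)=1$ gives $\Psi(0,x_2)=\varphi(x_2)$, and $\duno\fou{\Psi}(0,\xi)=\lambda\langle\xi\rangle\chi'(0)\fou{\varphi}(\xi)=0$ gives $\duno\Psi(0,x_2)=0$.

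For the $H^m$-regularity, by Plancherel in $x_2$ one has
\[
\|\Psi\|_{H^m(\Omega^+)}^2\sim \sum_{k=0}^{m}\int_\R\int_0^\infty \langle\xi\rangle^{2(m-k)}\,|\duno^k\fou{\Psi}(x_1,\xi)|^2\,dx_1\,d\xi,
\]
and since $\duno^k\fou{\Psi}(x_1,\xi)=(\lambda\langle\xi\rangle)^k\chi^{(k)}(\lambda x_1\langle\xi\rangle)\fou{\varphi}(\xi)$, the change of variable $y=\lambda x_1\langle\xi\rangle$ in the inner integral gives
\[
\int_0^\infty|\duno^k\fou{\Psi}|^2\,dx_1 \;=\; \lambda^{2k-1}\langle\xi\rangle^{2k-1}\,\|\chi^{(k)}\|_{L^2}^2\,|\fou{\varphi}(\xi)|^2.
\]
Summing over $k=0,\dots,m$ and using the definition of $\|\cdot\|_{H^{m-1/2}(\R)}$ yields $\|\Psi\|_{H^m(\Omega^+)}\le C_\lambda\|\varphi\|_{H^{m-1/2}(\R)}$ with $C_\lambda<\infty$ for every fixed $\lambda>0$, which proves continuity.

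The heart of the argument is the $L^\infty$-bound on $\duno\Psi$. Since $\chi'(0)=0$, I may write $\chi'(y)=y\,\tilde\chi(y)$ with $\tilde\chi\in C_c^\infty$, $\supp\tilde\chi\subset[0,R]$, and substituting into the inverse Fourier representation of $\duno\Psi$ one obtains (up to a multiplicative constant)
\[
\duno\Psi(x_1,x_2) \;=\; \lambda^2 x_1 \int_\R e^{ix_2\xi}\langle\xi\rangle^2\,\tilde\chi(\lambda x_1\langle\xi\rangle)\,\fou{\varphi}(\xi)\,d\xi.
\]
Cauchy--Schwarz, pairing $\langle\xi\rangle^2\fou{\varphi}$ (of $L^2$-norm $\|\varphi\|_{H^2(\R)}$, well-defined since $m\ge 3$) against the remaining factor, gives
\[
|\duno\Psi(x_1,x_2)|\;\le\; C\lambda^2 x_1\,\|\varphi\|_{H^2(\R)}\,\|\tilde\chi(\lambda x_1\langle\cdot\rangle)\|_{L^2(\R)}.
\]
Since $\tilde\chi(\lambda x_1\langle\xi\rangle)\ne 0$ forces $\langle\xi\rangle\le R/(\lambda x_1)$, the measure of the set of admissible $\xi$ is at most $2R/(\lambda x_1)$, whence $\|\tilde\chi(\lambda x_1\langle\cdot\rangle)\|_{L^2}^2\le 2\|\tilde\chi\|_\infty^2\,R/(\lambda x_1)$. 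Moreover $\duno\Psi\equiv 0$ whenever $\lambda x_1>R$. Combining these bounds,
\[
\|\duno\Psi\|_{L^\infty(\Omega^+)}\;\le\; C\lambda^{3/2}\sup_{0\le x_1\le R/\lambda}\sqrt{x_1}\,\|\varphi\|_{H^2(\R)}\;\le\; C\sqrt R\,\lambda\,\|\varphi\|_{H^2(\R)},
\]
and choosing $\lambda=\lambda(\eps)$ so that $C\sqrt R\,\lambda\le\eps$ concludes the proof.

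The main obstacle is precisely the smallness requirement in the $L^\infty$-bound on $\duno\Psi$: a single fixed-scale lifting would give only a uniform bound, not one with an arbitrarily small constant. The difficulty is resolved by exploiting the vanishing $\chi'(0)=0$ to factor out an extra $\lambda x_1$ from $\chi'(\lambda x_1\langle\xi\rangle)$; this gain combines with the localization $x_1\le R/\lambda$ forced by the compact support of $\chi$ to produce the small factor $\lambda$ in the final estimate. No comparable cancellation is needed for $\Psi$ itself or for the tangential derivatives, since only the size of $\duno\Psi$ is required to be small in the statement.
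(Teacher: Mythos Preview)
Your proof is correct and follows essentially the same construction as the paper: a Fourier-multiplier lifting $\Psi=\chi(\text{scale}\cdot x_1\langle D\rangle)\varphi$, with the scale parameter chosen at the end to make $\|\partial_1\Psi\|_{L^\infty}$ small. The only noteworthy technical difference is in how the smallness is extracted: the paper widens the cutoff (taking $\supp\chi\subset[-M,M]$ with $|\chi'|\le 2/M$, then estimating $\int|\chi'(x_1\langle\rho\rangle)|^2\langle\rho\rangle^{-2}d\rho\le CM^{-3/2}$ via the change of variable $s=x_1\langle\rho\rangle$), whereas you keep $\chi$ fixed, rescale the argument by $\lambda$, and exploit the factorization $\chi'(y)=y\,\tilde\chi(y)$ coming from $\chi'(0)=0$ to gain the extra power of $\lambda x_1$; both routes yield the same final bound and are equally valid.
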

\begin{proof}
The first part of the proof is similar to the proof of Lemma 1 in \cite{SeTr} and we repeat it here for reader's convenience.
Given an even function $\chi\in C^\infty_0(\R)$, with $\chi=1$ on $[-1,1]$, we define
\begin{equation}
\label{def-f1}
\Psi(x_1,x_2) :=\chi (x_1\langle D\rangle) \, \varphi(x_2) \, ,
\end{equation}
where $\chi(x_1 \langle D\rangle)$ is the pseudo-differential operator with $\langle D\rangle=(1+|D|^2)^{1/2}$ being the Fourier multiplier in the
variables $x_2$. From the definition it readily follows that $\Psi(0,x_2)=\varphi(x_2)$ for
all $x_2\in\R$. Moreover,
\begin{equation}
\label{d1Psi}
\duno \Psi(x_1,x_2) = \chi'(x_1\langle D\rangle) \, \langle D\rangle \, \varphi(x_2) \, ,
\end{equation}
which vanishes if $x_1=0$. We compute
\begin{equation*}
\|\Psi(x_1,\cdot)\|^2_{H^m(\R)} =\int_{\R}\langle \xi\rangle^{2m}\chi^2(x_1\langle \xi\rangle)|\hat\varphi(\xi)|^2d\xi \, ,
\end{equation*}
where $\hat\varphi(\xi)$ denotes the Fourier transform in $x_2$ of $\varphi$.
It follows that
\begin{align*}
\| \Psi \|^2_{L^2(\R^+_{x_1};H^m(\R))} =\int_{\R}\int_{\R} \langle \xi\rangle^{2m}\chi^2(x_1\langle \xi\rangle)|\hat\varphi(\xi')|^2d\xi \,dx_1
\\
=\int_{\R}\int_{\R} \langle \xi\rangle^{2m-1}\chi^2(s)|\hat\varphi(\xi)|^2d\xi \,ds
\le C\|\varphi\|^2_{H^{m-0.5}(\R)}\, .
\end{align*}
In a similar way, from \eqref{d1Psi}, we obtain
\begin{align*}
\| \duno \Psi \|^2_{L^2(\R^+_{x_1};H^{m-1}(\R))} =\int_{\R}\int_{\R} \langle \xi\rangle^{2m-2}|\chi'(x_1\langle \xi\rangle)\langle \xi\rangle|^2|\hat\varphi(\xi)|^2d\xi \,dx_1
\\
=\int_{\R}\int_{\R} \langle \xi\rangle^{2m-1}|\chi'(s)|^2|\hat\varphi(\xi')|^2d\xi \,ds
\le C\|\varphi\|^2_{H^{m-0.5}(\R)}\, .
\end{align*}
Iterating the same argument yields
\begin{equation*}
\| \duno^j \Psi \|^2_{L^2(\R^+_{x_1};H^{m-j}(\R))} \le C\, \| \varphi \|_{H^{m-0.5}(\R)}^2 \, ,\quad j=0,\dots,m \, .
\end{equation*}
Adding over $j=0,\dots, m$ finally gives $\Psi \in H^m(\Omega^+)$ and the continuity of the map $\varphi \mapsto \Psi$.


We now show that the cut-off function $\chi$, and accordingly the map $\varphi \mapsto \Psi$, can be chosen to give \eqref{diffeopiccolo}. From \eqref{d1Psi} we have
\begin{equation*}
\label{}
\duno \Psi(x_1,x_2) = (2\pi)^{-1}\int _{\R} e^{i\xi\cdot x_2}\chi'(x_1\langle \xi\rangle) \, \langle \xi\rangle \, \hat\varphi(\xi) \, d\xi .
\end{equation*}
By the Cauchy-Schwarz inequality and writing $\rho=| \xi | $ we get
\begin{align*}
\label{}
|\duno \Psi(x)| \le C \| \varphi\|_{H^{2}(\R)} \left( \int _{\R} |\chi'(x_1\langle \xi\rangle)|^2 \, \langle \xi\rangle^{-2} \, \, d\xi \right)^{1/2}
\\
\qquad\qquad =C \| \varphi\|_{H^{2}(\R)} \left( \int _0^{\infty} |\chi'(x_1\langle \rho\rangle)|^2 \, \langle \rho\rangle^{-2} \,  d\rho \right)^{1/2} .
\end{align*}
We change variables in the integral above by setting $s=x_1 \langle \rho\rangle$. It follows that
\begin{align}
\label{d1Psi2}
|\duno \Psi(x)| \le C \| \varphi\|_{H^{2}(\R)} \left( \int _{x_1}^{\infty} |\chi'(s)|^2 \, \frac{x_1}{s \sqrt{s^2-x_1^2}}\, {ds} \right)^{1/2}
.
\end{align}
Given any $M>1$, we choose $\chi$ such that $\chi(s)=0$ for $|s|\ge M$, and $|\chi'(s)|\le 2/M$ for every $s$.
Analyzing the above integral for all possible values of $x_1>0$ with respect to 1 and $M$ gives
\[
\int _{x_1}^{\infty} |\chi'(s)|^2 \, \frac{x_1}{s \sqrt{s^2-x_1^2}}\, {ds}
\le C M^{-3/2} \qquad \forall x_1>0\, .
\]
Then from \eqref{d1Psi2} one gets
\begin{align*}
\label{d1Psi3}
|\duno \Psi(x)| \le {C}{M}^{-3/4} \| \varphi\|_{H^{2}(\R)} .
\end{align*}
Given any $\eps>0$, if $M$ is such that ${C}{M}^{-3/4}<\eps$, then \eqref{diffeopiccolo} immediately follows.
\end{proof}

The following lemma gives the time-dependent version of Lemma \ref{lemma1}.

\begin{Lem}
\label{lemma2}
Let $m \ge3$ be an integer and let $T>0$. For all $\eps>0$ there exists a continuous linear map $\varphi\in \cap_{j=0}^{m-1}
{\mathcal C}^j([0,T];H^{m-j-0.5}(\R)) \mapsto \Psi \in \cap_{j=0}^{m-1} {\mathcal C}^j([0,T];H^{m-j}(\Omega^+))$
such that $\Psi(t,0,x_2)=\varphi(t,x_2)$,  $\duno \Psi (t,0,x_2)=0$ on $\Gamma$, and
\begin{equation}
\begin{array}{ll}\label{diffeopiccolo2}
\|\duno\Psi\|_{{\mathcal C}([0,T];L^\infty(\Omega^+))}\le\eps \, \| \varphi\|_{{\mathcal C}([0,T];H^{2}(\R))}.
\end{array}
\end{equation}
Furthermore, there exists a constant $C>0$ that is independent of $T$ and only depends on $m$, such that
\begin{multline*}
\forall \, \varphi\in \cap_{j=0}^{m-1} {\mathcal C}^j([0,T];H^{m-j-0.5}(\R)) \, ,\quad
\forall \, j=0,\dots,m-1 \, ,\quad \forall \, t \in [0,T] \, ,\\
\| \partial_t^j \Psi (t,\cdot) \|_{H^{m-j}(\Omega^+)} \le C \,
\| \partial_t^j \varphi (t,\cdot) \|_{H^{m-j-0.5}(\R)} \, .
\end{multline*}
\end{Lem}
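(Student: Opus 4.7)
The plan is to lift the construction of Lemma \ref{lemma1} pointwise in time. Since the operator $\varphi \mapsto \chi(x_1\langle D\rangle)\varphi$ defined in \eqref{def-f1} is linear, continuous, and acts only on the $x_2$ variable, applying it time by time yields a candidate with all the desired properties. Concretely, I would set
\[
\Psi(t,x_1,x_2) \doteq \chi(x_1 \langle D\rangle)\,\varphi(t,x_2),
\]
where $\chi \in C_0^\infty(\R)$ is the same even cut-off selected at the end of Lemma \ref{lemma1} (chosen so that $\chi(0)=1$ and the large parameter $M$ is fixed to make $CM^{-3/4}<\eps$).

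The boundary conditions follow from the spatial result: $\Psi(t,0,x_2)=\chi(0)\varphi(t,x_2)=\varphi(t,x_2)$, and, by \eqref{d1Psi}, $\partial_1\Psi(t,0,x_2)=\chi'(0)\langle D\rangle\varphi(t,x_2)=0$ because $\chi$ is even. The $L^\infty$ estimate \eqref{diffeopiccolo2} is obtained by applying \eqref{diffeopiccolo} at each fixed $t\in[0,T]$ and taking the supremum in $t$, using that the bound in Lemma \ref{lemma1} depends linearly on $\|\varphi(t,\cdot)\|_{H^2(\R)}$.

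The key observation for the regularity is that $\chi(x_1\langle D\rangle)$ commutes with $\partial_t$ because it is a Fourier multiplier in $x_2$ only. Hence for $j=0,\dots,m-1$,
\[
\partial_t^j \Psi(t,\cdot) = \chi(x_1\langle D\rangle)\,\partial_t^j\varphi(t,\cdot),
\]
and the spatial continuity estimate from Lemma \ref{lemma1}, applied to $\partial_t^j\varphi(t,\cdot)\in H^{m-j-\frac12}(\R)$, gives
\[
\|\partial_t^j \Psi(t,\cdot)\|_{H^{m-j}(\Omega^+)} \le C\,\|\partial_t^j\varphi(t,\cdot)\|_{H^{m-j-\frac12}(\R)},
\]
with $C$ depending only on $m$ (in particular independent of $T$, since no integration in time is performed). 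Continuity in $t$ of $\partial_t^j\Psi(t,\cdot)$ with values in $H^{m-j}(\Omega^+)$ is inherited from the continuity of $\partial_t^j\varphi$ with values in $H^{m-j-\frac12}(\R)$, again because the lifting operator is bounded and linear between these spaces.

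There is no real obstacle here: the construction is essentially a direct tensorization of the spatial lifting with the identity in time. The only point worth checking carefully is the commutation of $\chi(x_1\langle D\rangle)$ with $\partial_t$, which is immediate, and the fact that the constant in the estimate is independent of $T$, which follows because all bounds are pointwise in $t$. Linearity and continuity of the map $\varphi\mapsto\Psi$ between the stated spaces then follows by summing the estimates over $j=0,\dots,m-1$.
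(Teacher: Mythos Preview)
Your proof is correct and follows essentially the same approach as the paper: apply the spatial lifting of Lemma~\ref{lemma1} with $t$ as a parameter, and use linearity plus the commutation $\partial_t^j\Psi(\varphi)=\Psi(\partial_t^j\varphi)$ to transfer time regularity. The paper's proof is simply a more compressed version of what you wrote.
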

\begin{proof}

The proof of Lemma \ref{lemma2} follows from Lemma \ref{lemma1}, with $t$ as a parameter. Notice also that
the map $\varphi\to \Psi$, defined by  \eqref{def-f1}, is linear and that the time regularity is conserved because, with
obvious notation, $\Psi(\dt^j \varphi) =\dt^j \Psi(\varphi)$. The conclusions of Lemma \ref{lemma2} follow directly.
\end{proof}

\begin{proof}[Proof of Lemma \ref{lemma.diffeo}]
The proof follows directly from Lemma \ref{lemma2} because
$$
\duno \Phi_1(t,x) =1 +\duno \Psi(t,x)\ge 1-\| \duno\Psi(t,\cdot) \|_{{\mathcal C}([0,T];L^{\infty}(\Omega^+))}
\ge 1 -\eps \, \|\varphi\|_{{\mathcal C}([0,T];H^{2}(\R))} \ge 1/2 \, ,
$$
provided $\eps$ is taken sufficiently small, e.g. $\eps<1/2$. The other properties of $\Phi$ follow directly from Lemma \ref{lemma2}.
\end{proof}


\end{document}